\newtheorem{thm}{Theorem}[section]
\newtheorem{cor}[thm]{Corollary}
\newtheorem{prop}[thm]{Proposition}
\newtheorem{lem}[thm]{Lemma}
\newtheorem{quest*}{Question}
\newtheorem{prob*}{Problem}
\theoremstyle{definition}
\theoremstyle{remark}
\newtheorem{remark}[thm]{Remark}
\theoremstyle{remark}
\numberwithin{equation}{section}
\crefname{figure}{Figure}{Figures}
\theoremstyle{plain}
\newtheorem*{thm*}{Theorem}
\crefname{thm}{Theorem}{Theorems}
\crefname{cor}{Corollary}{Corollaries}
\newtheorem*{cor*}{Corollary}
\crefname{cor*}{Corollary}{Corollaries}
\crefname{lem}{Lemma}{Lemmas}
\crefname{prop}{Proposition}{Propositions}
\crefname{conj}{Conjecture}{Conjectures}
\newtheorem*{conj*}{Conjecture}
\crefname{conj*}{Conjecture}{Conjectures}
\crefname{defn}{Definition}{Definitions}
\crefname{hyp}{Hypothesis}{Hypotheses}
\newcommand{\Z}{\mathbb{Z}}
\newcommand{\R}{\mathbb{R}}
\newcommand{\Q}{\mathbb{Q}}
\newcommand{\dsum}{\mathop{\sum\vspace*{-3pt}\sum}}
\renewcommand{\bar}{\overline}
\renewcommand{\epsilon}{\varepsilon}
\newcommand{\kf}{\mathfrak{f}}
\newcommand{\Li}{\mathrm{Li}}
\renewcommand{\Im}{\mathrm{Im}}
\renewcommand{\pmod}[1]{\, (\mathrm{mod} {\, #1})}
\newcommand{\kn}{\mathfrak{n}}
\newcommand{\N}{\mathrm{N}}
\newcommand{\kp}{\mathfrak{p}}
\newcommand{\kP}{\mathfrak{P}}
\newcommand{\cO}{\mathcal{O}}
\newcommand{\ord}{\mathrm{ord}\,}
\newcommand{\cQ}{\mathcal{Q}}
\renewcommand{\Re}{\mathrm{Re}}
\newcommand{\stab}{{\rm stab}}
\newcommand{\sumS}{\sideset{}{^\star}\sum}
\newcommand{\sumD}{\sideset{}{^\dagger}\sum}
\newcommand{\SL}{ {\rm SL}}
\renewcommand{\pmod}[1]{\left(\mathrm{mod}\,\,#1\right)}
\title{A unified and improved Chebotarev density theorem}
\author{Jesse Thorner}
\address{Department of Mathematics, Stanford University, Stanford, CA 94305}
\email{jthorner@stanford.edu}
\author{Asif Zaman}
\address{Department of Mathematics, Stanford University, Stanford, CA 94305}
\email{aazaman@stanford.edu}
\begin{document}

\maketitle

\section{Introduction and statement of results}
\label{sec:intro}

\subsection{Introduction}
Let $L/F$ be a Galois extension of number fields with Galois group $G$.  For each prime ideal $\kp$ of $F$ that is unramified in $L$, we use the Artin symbol $[\frac{L/F}{\kp}]$ to denote the conjugacy class of $G$ consisting of the set of Frobenius automorphisms attached to the prime ideals $\kP$ of $L$ which lie over $\kp$.  For any conjugacy class $C \subseteq G$, define the function
\begin{equation}
\label{def:pi_C}
\pi_C(x)=\pi_C(x,L/F)=\#\Big\{\N_{F/\Q}\kp\leq x\colon \textup{$\kp$ unramified in $L$, $\Big[\frac{L/F}{\kp}\Big]=C$}\Big\},
\end{equation}
where $\N_{F/\Q}$ is the absolute norm of $F/\Q$.  The Chebotarev density theorem states that
\[
\pi_C(x)\sim \frac{|C|}{|G|}\mathrm{Li}(x)\qquad \textup{as $x\to\infty$.}
\]

It follows from work of V.K. Murty \cite[Section 4]{VKM} that there exists an absolute,  effective, and positive constant $\Cl[abcon]{CDT}$ such that
\begin{equation}
\label{eqn:CDT}
\pi_C(x)=\frac{|C|}{|G|}\Big(\mathrm{Li}(x)-\theta_1 \mathrm{Li}(x^{\beta_1})+O\Big(xe^{-\Cr{CDT}\sqrt{\frac{\log x}{n_L}}}\Big) \Big),\quad \log x\gg \frac{(\log D_L)^2}{n_L} + n_L(\log n_L)^2,
\end{equation}
which refines a well-known result of Lagarias and Odlyzko \cite[Theorem 1.2]{LO}.  Here, $D_L$ is the absolute discriminant of $L$, $n_L=[L:\Q]$ is the degree of $L$ over $\Q$, $\beta_1$ is a possible Landau-Siegel zero of the Dedekind zeta function $\zeta_L(s)$ of $L$, and $\theta_{1} = \theta_1(C)\in \{-1,0,1\}$ depends on $C$; in particular, $\theta_{1}(C)=0$ if and only if $\beta_1$ does not exist.   For comparison, Lagarias and Odlyzko \cite[Theorem 1.1]{LO} proved that the generalized Riemann hypothesis for $\zeta_L(s)$ implies the more uniform result
\begin{equation}
\label{eqn:CDT_GRH}
\pi_C(x)=\frac{|C|}{|G|}\Big(\mathrm{Li}(x)+O(\sqrt{x}\log (D_L x^{n_L})) \Big),\qquad x\gg (\log D_L)^2 (\log\log D_L)^4.
\end{equation}

As of now, the best bound for $\beta_1$ is due to Stark \cite[Theorem 1', p. 148]{Stark-1974}; it implies that
\begin{equation}
1-\beta_1 \gg (n_L^{n_L} \log D_L + D_L^{1/n_L})^{-1}. 
\label{eqn:Stark}
\end{equation}
Therefore, in order to ensure that $\frac{|C|}{|G|}\mathrm{Li}(x)$ dominates all other terms in \eqref{eqn:CDT}, one must therefore take the range of $x$ to be
\begin{equation}
\label{eqn:bad_range}
\log x \gg n_L^{-1}(\log D_L)^2 + n_L (\log n_L)^2 + (1-\beta_1)^{-1}	
\end{equation}
and apply \eqref{eqn:Stark} if $\beta_1$ exists. Otherwise, one omits the last term in \eqref{eqn:bad_range} if  $\beta_1$ does not exist. Regardless, \eqref{eqn:bad_range} is very prohibitive in many applications where uniformity in $L/F$ is crucial.  Thus it often helps in applications to have upper and lower bounds for $\pi_C(x)$ of order $\mathrm{Li}(x)$ in ranges of $x$ which are more commensurate with \eqref{eqn:CDT_GRH}.  Lagarias, Montgomery, and Odlyzko \cite{LMO} made substantial progress on these problems; their work has been improved upon by Weiss \cite{Weiss}, the authors \cite{TZ1,TZ2}, and Zaman \cite{Zaman_Thesis}.  In particular, it follows from the joint work of the authors \cite{TZ1,TZ2} that there exist absolute, effective constants $A>2$ and $B>2$ such that if $D_L$ is sufficiently large, then
\begin{equation}
	\label{eqn:LMO_Linnik}
	\frac{1}{(D_L n_L^{n_L})^A}\frac{|C|}{|G|}\mathrm{Li}(x)\ll \pi_C(x)<(2+o(1))\frac{|C|}{|G|}\mathrm{Li}(x) \qquad\textup{for $x \geq (D_L n_L^{n_L})^B$,}
\end{equation}
where the $o(1)$ term tends to zero as $(\log x)/\log(D_L n_L^{n_L})$ tends to infinity\footnote{The term $n_L^{n_L}$ is usually negligible compared to a power of $D_L$. If not, one might appeal to \cite[Theorem 1.3.1]{Zaman_Thesis} which states that $\pi_C(x)\gg D_L^{-A}\frac{|C|}{|G|}\mathrm{Li}(x)$ for $x\geq D_L^B$. }.  

To summarize the above discussion, suppose that we are in the worst case scenario with $\theta_1=1$ and $\beta_1$ is as bad as \eqref{eqn:Stark} permits.  If one is willing to sacrifice an asymptotic equality for $\pi_C(x)$ in order to obtain estimates in noticeably better ranges than \eqref{eqn:bad_range}, then one might use \eqref{eqn:LMO_Linnik}.  On the other hand, if one needs an asymptotic equality for $\pi_C(x)$, then one uses \eqref{eqn:CDT} in the prohibitive range \eqref{eqn:bad_range}.

\subsection{Results}
Our main result, \cref{thm:main_theorem}, is a new asymptotic equality for $\pi_C(x)$ which interpolates both of the aforementioned options while providing several new options.  In other words, we prove a new asymptotic equality for $\pi_C(x)$ from which one may deduce both \eqref{eqn:CDT} and \eqref{eqn:LMO_Linnik}.  First, we present a simplified version of the main result.
\begin{thm}
	\label{thm:CDT_1}
	Let $L/F$ be a Galois extension of number fields with Galois group $G$, and let $C\subseteq G$ be a conjugacy class.  Let $\beta_1$ denote the Landau-Siegel zero of the Dedekind zeta function $\zeta_L(s)$, if it exists.  There exist absolute and effective constants $\Cl[abcon]{1}>0$ and $\Cl[abcon]{2}>0$ such that if $L\neq \Q$ and $x\geq(D_L n_L^{n_L})^{\Cr{1}}$, then
	\[
	\pi_C(x)=\frac{|C|}{|G|}\Big(\mathrm{Li}(x)-\theta_1 \mathrm{Li}(x^{\beta_1}) \Big)\Big(1+O\Big(\exp\Big[-\frac{\Cr{2}\log x}{\log(D_L n_L^{n_L})}\Big]+\exp\Big[-\frac{(\Cr{2}\log x)^{1/2}}{n_L^{1/2}}\Big]\Big)\Big),
	\]
	where $\theta_1 = \theta_1(C) \in \{-1,0,1\}$. In particular, $\theta_1 =0$ precisely when $\beta_1$ does not exist. 
\end{thm}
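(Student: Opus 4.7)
The plan is to combine the standard explicit formula for the Chebotarev prime-counting function with two complementary inputs about the zeros of $\zeta_L(s)$: a classical de la Vall\'ee Poussin zero-free region and a log-free zero density estimate, both carrying explicit dependence on $D_L$ and $n_L$. These two inputs are precisely what will produce the two exponentials appearing in the stated error term.

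The first step is a standard reduction. Using orthogonality together with Deuring's trick, I would rewrite the indicator of $\{[\tfrac{L/F}{\kp}] = C\}$ as a linear combination of characters of cyclic subgroups of $G$, thereby recasting $\psi_C(x)$ as a linear combination of Chebyshev-type functions attached to Hecke $L$-functions whose product divides $\zeta_L(s)$. The passage from $\pi_C(x)$ to $\psi_C(x)$ costs only an $O(x^{1/2}\log(x D_L))$ prime-power term, which is negligible against both error terms in the stated range. The explicit formula then yields, after truncating at a height $T$ to be chosen,
\[
\psi_C(x) = \frac{|C|}{|G|}\Big(x - \theta_1 \frac{x^{\beta_1}}{\beta_1}\Big) - \frac{|C|}{|G|} \sum_{\substack{\rho \neq \beta_1 \\ |\Im \rho|\leq T}} \frac{x^{\rho}}{\rho} + E(x,T),
\]
with a standard remainder $E(x,T)$.

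The second step is the estimation of the zero sum. I would split the nontrivial zeros according to whether $\Re \rho \leq \sigma$ or $\Re \rho > \sigma$, for a parameter $\sigma$ to be optimized. On the first range I apply a log-free zero density estimate of the shape $N_L(\sigma,T) \ll (e^{O(n_L)} D_L T^{n_L})^{c(1-\sigma)}$ from the authors' previous work \cite{TZ1,TZ2}; balancing $\sigma$ and $T$ produces a contribution of size $x\exp[-c\log x/\log(D_L n_L^{n_L})]$. On the complementary range I invoke the classical zero-free region $1 - \Re \rho \gg 1/\log(D_L(|\Im \rho|+3)^{n_L})$ for $\zeta_L(s)$ with $\beta_1$ removed, and optimize $T$ in the standard way to get a contribution of size $x\exp[-c(\log x/n_L)^{1/2}]$. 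The Deuring-Heilbronn repulsion phenomenon ensures that, even when a Siegel zero is present, the remaining zeros are pushed sufficiently far from $s=1$ that no additional main term survives.

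The final step is bookkeeping: I would convert the additive estimate for $\psi_C(x)$ into the multiplicative form stated for $\pi_C(x)$ via partial summation, and check that even in the worst case $\beta_1 \to 1^-$ the main term $\mathrm{Li}(x) - \theta_1 \mathrm{Li}(x^{\beta_1}) \gg (1-\beta_1) x/\log x$ dominates the error in the range $x \geq (D_L n_L^{n_L})^{\Cr{1}}$. The principal technical obstacle is the log-free density estimate with the stated joint uniformity in $\sigma$, $T$, $D_L$, and $n_L$; this is precisely the content supplied by \cite{TZ1,TZ2}, and once it is in hand the remaining argument is a careful but routine Perron/contour analysis.
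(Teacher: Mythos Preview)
Your overall strategy is close to the paper's, but there is a genuine gap in the final verification step. You plan to use the log-free density estimate $N_L(\sigma,T) \ll (e^{O(n_L)} D_L T^{n_L})^{c(1-\sigma)}$ together with a \emph{separate} invocation of Deuring--Heilbronn repulsion, and then check that the main term $\mathrm{Li}(x)-\theta_1\mathrm{Li}(x^{\beta_1}) \gg (1-\beta_1)\,x/\log x$ dominates the error for $x\ge (D_L n_L^{n_L})^{\Cr{1}}$. The paper's appendix explains why this combination is insufficient in general. Write $Q=D_L n_L^{n_L}$ and $\lambda_1=(1-\beta_1)\log Q$. When the root discriminant $D_L^{1/n_L}$ is bounded (as in infinite class field towers), Stark's bound permits $\lambda_1$ to be as small as $Q^{-2}$, in particular smaller than any fixed power $Q^{-c/n_L}$. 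In that regime the Deuring--Heilbronn zero-free region improves the zero-sum error only to roughly $x\exp\big(-c\sqrt{\log(1/\lambda_1)}\big)$ times the usual factors, whereas the main term is only $\gg \lambda_1 x = x\exp\big(-\log(1/\lambda_1)\big)$. Since $\sqrt{\log(1/\lambda_1)}=o\big(\log(1/\lambda_1)\big)$ as $\lambda_1\to 0$, the error swamps the main term precisely at the bottom of the range $x\asymp Q^{\Cr{1}}$ that you need.

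The paper's remedy is to use Weiss's refinement of the log-free density estimate, which carries an extra factor $B_1=\min\{1,(1-\beta_1)\log(QT^{n_L})\}$ on the right-hand side (the idea goes back to Bombieri for Dirichlet $L$-functions). Building the repulsion directly into the density estimate gives the zero sum an additional factor of $\lambda_1$, which then cancels against the $\lambda_1$ in the main-term lower bound, and the final step goes through uniformly. With this single change your outline is essentially the paper's argument; the remaining differences---a smooth Mellin weight in place of sharp Perron truncation, and a dyadic decomposition of zeros by height rather than a split by $\Re\rho$---are technical conveniences rather than essential points.
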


By verifying when
\[
\exp\Big[-\frac{(\Cr{2}\log x)^{1/2}}{n_L^{1/2}}\Big]\gg \exp\Big[-\frac{\Cr{2}\log x}{\log(D_L n_L^{n_L})}\Big],
\]
we see that \cref{thm:CDT_1} recovers \eqref{eqn:CDT} and is therefore a uniform improvement over it.  Also, it follows from the mean value theorem and \eqref{eqn:Stark} that
\begin{equation}
\label{eqn:2_stark}
\Li(x) - \theta_1 \Li(x^{\beta_1}) \gg \big( (1-\beta_1) \log(D_L n_L^{n_L}) \big) \mathrm{Li}(x)\gg \frac{\log(D_L n_L^{n_L})}{D_L^{1/n_L} + n_L^{n_L} \log D_L}\mathrm{Li}(x).
\end{equation}
With this lower bound at our disposal, one can see that \cref{thm:CDT_1} recovers \eqref{eqn:LMO_Linnik}.  Thus \cref{thm:CDT_1} unifies and improves both \eqref{eqn:CDT} and \eqref{eqn:LMO_Linnik}.

As noted above, if one wants $\frac{|C|}{|G|}\mathrm{Li}(x)$ to dominate all other terms in \eqref{eqn:CDT}, then one must take $x$ in the range \eqref{eqn:bad_range}.  However,  one can plainly see that
\begin{equation}
\label{eqn:MT_awkward}
\frac{|C|}{|G|}(\mathrm{Li}(x)-\theta_1\mathrm{Li}(x^{\beta_1}))
\end{equation}
dominates all other terms in \cref{thm:CDT_1} for all $x$ in the claimed range, provided that $\Cr{1}$ is suitably large compared to $\Cr{2}$.  At first glance, it may seem awkward that we adjoin the contribution from $\beta_1$ to the ``main term'' when it is classically viewed as an error term.  But without eliminating the existence of $\beta_1$, it is well-known that in situations where $\theta_1\neq0$ and $x$ is small, say $\log x\ll \log(D_L n_L^{n_L})$, the term $-\theta_1\frac{|C|}{|G|}\mathrm{Li}(x^{\beta_1})$ is more properly treated as a secondary term than an error term.  When $\theta_1=1$ and $\beta_1$ is especially close to 1, this secondary term causes serious difficulties in the proof of Linnik's bound for the least prime in an arithmetic progression \cite{Linnik}.  Fortunately, it follows from \eqref{eqn:2_stark} that regardless of whether $\beta_1$ exists, we have
\begin{equation}
\label{eqn:upperlower}
\mathrm{Li}(x)\ll_{L}\mathrm{Li}(x)-\theta_1\mathrm{Li}(x^{\beta_1})< 2\mathrm{Li}(x).
\end{equation}
Therefore, in the range of $x$ where $-\frac{|C|}{|G|}\theta_1\mathrm{Li}(x^{\beta_1})$ acts like a secondary term, \eqref{eqn:upperlower} shows that \cref{thm:CDT_1} recovers upper and lower bounds of order $\mathrm{Li}(x)$ precisely because \eqref{eqn:MT_awkward} dominates all other terms in \cref{thm:CDT_1}.  This perspective is implicit in Linnik's work.  On the other hand, when $x$ is sufficiently large in terms of $L/F$ per \eqref{eqn:bad_range}, the contribution from $\beta_1$ can be safely absorbed into the $O$-term in \cref{thm:CDT_1}.  In light of these observations, we believe that viewing \eqref{eqn:MT_awkward} as the ``main term'' in \cref{thm:CDT_1} helps to clarify the role of the contribution from $\beta_1$ when one transitions from small values of $x$ to large values of $x$.

Upon considering the $O$-term in \cref{thm:CDT_1}, we see that \cref{thm:CDT_1} noticeably improves the range of $x$ in which we have an asymptotic equality for $\pi_C(x)$.

\begin{cor}
	\label{cor:CDT_2}
	If $\frac{\log x}{\log(D_L n_L^{n_L})}\to\infty$, then $\pi_C(x) \sim\frac{|C|}{|G|}(\mathrm{Li}(x)-\theta_{1}\mathrm{Li}(x^{\beta_1}))$.
\end{cor}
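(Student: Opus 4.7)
The plan is to deduce \cref{cor:CDT_2} directly from \cref{thm:CDT_1} by showing that both exponential error terms in the big-$O$ vanish under the given hypothesis. Assume $L\neq\Q$ (otherwise $\pi_C(x) = \pi(x)$ and the assertion is the prime number theorem, with $\theta_1=0$).

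First, since $\log x / \log(D_L n_L^{n_L}) \to \infty$, eventually $\log x \geq \Cr{1} \log(D_L n_L^{n_L})$, so $x \geq (D_L n_L^{n_L})^{\Cr{1}}$ and \cref{thm:CDT_1} applies. The first error term
\[
\exp\!\Big[-\frac{\Cr{2}\log x}{\log(D_L n_L^{n_L})}\Big]
\]
tends to zero immediately by hypothesis.

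The only real issue is the second error term $\exp[-(\Cr{2}\log x)^{1/2}/n_L^{1/2}]$. To force this to vanish, I would invoke Minkowski's classical lower bound for the root discriminant: for $L\neq\Q$ there is an absolute, effective constant $\kappa>0$ such that $\log D_L \geq \kappa n_L$. Hence
\[
\log(D_L n_L^{n_L}) \;\geq\; \log D_L \;\geq\; \kappa\, n_L,
\]
and therefore
\[
\frac{\log x}{n_L} \;\geq\; \kappa\,\frac{\log x}{\log(D_L n_L^{n_L})} \;\longrightarrow\; \infty.
\]
This shows that the second exponential error term also tends to zero.

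Combining the two estimates, the $O(\cdot)$ factor in \cref{thm:CDT_1} is $1+o(1)$, giving $\pi_C(x) \sim \frac{|C|}{|G|}(\mathrm{Li}(x)-\theta_1\mathrm{Li}(x^{\beta_1}))$ as claimed. I do not expect any genuine obstacle; the only step requiring thought is the appeal to Minkowski to convert the $n_L^{1/2}$ in the denominator into a quantity controlled by $\log(D_L n_L^{n_L})$, which is what allows a single hypothesis on $\log x / \log(D_L n_L^{n_L})$ to dominate both error terms simultaneously.
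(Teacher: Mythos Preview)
Your proof is correct and matches the paper's (implicit) approach: the paper states \cref{cor:CDT_2} as an immediate consequence of the $O$-term in \cref{thm:CDT_1} without further argument, and your verification is precisely the natural way to fill in the details. One small remark: Minkowski's bound is not even strictly needed, since $\log(D_L n_L^{n_L}) \geq n_L \log n_L \gg n_L$ already follows for $n_L \geq 2$ directly from the $n_L^{n_L}$ factor.
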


\cref{thm:CDT_1} also produces a new asymptotic equality in which the error term saves an arbitrarily large power of $\log x$ in a much stronger range of $x$ than \eqref{eqn:CDT}.  

\begin{cor}
	\label{cor:CDT_3}
	Let $A > 1$. If $\log x \gg_A (\log D_L)(\log\log D_L) + n_L (\log n_L)^2$, then
	\begin{equation}
	\pi_C(x) = \frac{|C|}{|G|}\Big(\mathrm{Li}(x)-\theta_{1}\mathrm{Li}(x^{\beta_1}) \Big) \Big(1 + O_A\big((\log x)^{-A}\big) \Big). 
	\label{eqn:CDT_logs}
\end{equation}
\end{cor}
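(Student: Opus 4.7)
The plan is to derive \cref{cor:CDT_3} directly from \cref{thm:CDT_1}: I will show that under the hypothesis $\log x \gg_A (\log D_L)(\log\log D_L) + n_L(\log n_L)^2$, both exponential factors in the relative error of \cref{thm:CDT_1} are $O_A((\log x)^{-A})$. Writing $M = \log(D_L n_L^{n_L}) = \log D_L + n_L \log n_L$, taking logarithms reduces the problem to verifying
\[
\text{(i)}\quad \log x \ge \tfrac{A}{\Cr{2}} M \log\log x, \qquad \text{(ii)}\quad \log x \ge \tfrac{A^2}{\Cr{2}} n_L (\log\log x)^2.
\]
Both will be obtained by splitting into a small-$\log x$ regime (where $\log\log x$ is controlled by $\log\log D_L$ or $\log n_L$) and a large-$\log x$ regime (where $\log x$ dominates any fixed polynomial in $\log\log x$). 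A preliminary check that $x \ge (D_Ln_L^{n_L})^{\Cr{1}}$ so that \cref{thm:CDT_1} applies is immediate once the implied constant in $\gg_A$ is taken sufficiently large.

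For (ii), if $\log x \le n_L^2$ then $\log\log x \le 2\log n_L$, and the inequality follows from $\log x \gg_A n_L(\log n_L)^2$. If instead $\log x > n_L^2$, then $\log x/(\log\log x)^2 \ge n_L^2/(2\log n_L)^2$, which exceeds any fixed multiple of $n_L$ for $n_L$ large; bounded $n_L$ is absorbed by the implied constant since $\log x$ is bounded below by a quantity that grows without bound.

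For (i), if $\log x > M^2$ then $\log x/\log\log x \ge M^2/(2\log M) \gg M$, yielding the inequality after enlarging the implied constant. The delicate case is $\log x \le M^2$, where $\log\log x \le 2\log M \ll \log\log D_L + \log n_L$, so one must control $M(\log\log D_L + \log n_L)$ by the hypothesis. Expanding yields the diagonal terms $(\log D_L)(\log\log D_L)$ and $n_L(\log n_L)^2$ already present, together with cross terms $(\log D_L)(\log n_L)$ and $(n_L\log n_L)(\log\log D_L)$. The first cross term is handled by a dichotomy on $\log n_L$ versus $\log\log D_L$: if $\log n_L \le \log\log D_L$ it is at most $(\log D_L)(\log\log D_L)$, while if $\log n_L > \log\log D_L$ then $n_L > \log D_L$, so it is at most $n_L(\log n_L)^2$. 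For the second cross term, I would apply the weighted AM--GM bound $2(n_L\log n_L)(\log\log D_L) \le n_L(\log n_L)^2 + n_L(\log\log D_L)^2$ and then dispatch $n_L(\log\log D_L)^2$ by the further dichotomy $n_L\log\log D_L \le \log D_L$ (in which case it is $\le (\log D_L)(\log\log D_L)$) versus the opposite regime, in which $n_L > \log D_L/\log\log D_L$ forces $\log n_L \ge \tfrac12 \log\log D_L$ for $D_L$ sufficiently large, giving $n_L(\log\log D_L)^2 \ll n_L(\log n_L)^2$.

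The main technical point is organizing the case analysis for the cross term $(n_L\log n_L)(\log\log D_L)$ in (i); everything else is routine, and the constant $c(A)$ in the hypothesis can be chosen large relative to $A/\Cr{2}$ to accommodate all multiplicative losses accumulated along the way.
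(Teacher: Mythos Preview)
Your proposal is correct and follows exactly the route the paper intends: the paper states \cref{cor:CDT_3} as an immediate consequence of \cref{thm:CDT_1} without giving details, and your argument supplies precisely those details by verifying that each exponential in the relative error of \cref{thm:CDT_1} is $O_A((\log x)^{-A})$ under the stated hypothesis. One minor simplification: in part (i) for the range $\log x \le M^2$, rather than expanding $M\log M$ into four terms and handling cross terms via AM--GM and dichotomies, you can observe directly that $M \le 2\max(\log D_L,\, n_L\log n_L)$ and $\log M \ll \max(\log\log D_L,\, \log n_L)$, so $M\log M \ll \max\bigl((\log D_L)(\log\log D_L),\, n_L(\log n_L)^2\bigr)$, which is already bounded by the hypothesis.
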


In order to state the main result from which \cref{thm:CDT_1} follows, we introduce some additional notation.  Let $H\subseteq G$ be an abelian subgroup of $G$ such that $H\cap C$ is nonempty, and let $K=L^H$ be the fixed field of $H$.  The characters $\chi$ in the dual group $\widehat{H}$ are Hecke characters; we write the conductor of $\chi$ as $\mathfrak{f}_{\chi}$.  Define
\begin{equation}
\mathcal{Q}=\mathcal{Q}(L/K)=\max_{\chi\in\widehat{H}}\N_{K/\Q}\mathfrak{f}_{\chi}.
\label{eqn:MaxConductor_Abelian}
\end{equation}
We write the $L$-function associated to such a Hecke character as $L(s,\chi,L/K)$. From work of Stark \cite{Stark-1974}, at most one real Hecke character $\chi_1\in\widehat{H}$ has an associated Hecke $L$-function $L(s,\chi_1,L/K)$ with a Landau-Siegel zero $\beta_1 = 1 - \lambda_1/\log(D_K\mathcal{Q} n_K^{n_K})$, where $0<\lambda_1 < \frac{1}{8}$.

\begin{thm}
\label{thm:main_theorem}
	Let $L/F$ be a Galois extension of number fields with Galois group $G$, and let $C\subseteq G$ be a conjugacy class.  Let $H\subseteq G$ be an abelian subgroup such that $C\cap H$ is nonempty, let $K$ be the fixed field of $H$, and choose $g_C\in C\cap H$.  If $x\geq (D_K \cQ n_K^{n_K})^{\Cr{1}}$, then
	\begin{align*}
	\pi_C(x)=\frac{|C|}{|G|}\Big(\mathrm{Li}(x)-\theta_1\mathrm{Li}(x^{\beta_1})\Big)\Big(1+O\Big(\exp\Big[-\frac{\Cr{2}\log x}{\log(D_K\mathcal{Q}n_K^{n_K})}\Big]+\exp\Big[-\frac{(\Cr{2}\log x)^{1/2}}{n_K^{1/2}}\Big]\Big)\Big),
	\end{align*}
	where $\theta_1=\chi_1(g_C)$ if $\beta_1$ exists and $\theta_1=0$ otherwise and $\mathcal{Q}$ is given by \eqref{eqn:MaxConductor_Abelian}.  The constants $\Cr{1}$ and $\Cr{2}$ are the same as in \cref{thm:CDT_1}.
	\end{thm}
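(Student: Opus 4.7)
The plan is to combine a Deuring-type reduction from $G$ down to the abelian subgroup $H$ with a Hecke $L$-function analysis over $K$ that harnesses both the classical zero-free region and the log-free zero density estimates from \cite{TZ1,TZ2}. For a prime $\mathfrak{q}$ of $K$ of residue degree one over $F$ that is unramified in $L$ with Frobenius $g_C \in H$, the underlying prime $\mathfrak{p} = \mathfrak{q}\cap F$ has Artin symbol $C$ in $L/F$; a standard count (following Lagarias--Odlyzko) yields
\[
\pi_C(x,L/F) = \frac{|C|}{|H|}\pi_{g_C}(x,L/K) + O\bigl(\sqrt{x}\,n_L\log D_L\bigr),
\]
where $\pi_{g_C}(x,L/K)$ counts unramified primes $\mathfrak{q}$ of $K$ with $\N_{K/\Q}\mathfrak{q}\leq x$ and Frobenius $g_C$. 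The square-root error collects primes of residue degree $\geq 2$ over $F$ and is absorbed in our range of $x$. I then pass to the Chebyshev-weighted version $\psi_{g_C}(x)$ and recover $\pi_{g_C}$ by partial summation at the end.

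Since $H$ is abelian, character orthogonality gives
\[
\psi_{g_C}(x) = \frac{1}{|H|}\sum_{\chi\in\widehat{H}}\overline{\chi(g_C)}\,\psi(x,\chi),
\]
where $\psi(x,\chi)$ is the prime-power sum attached to the Hecke $L$-function $L(s,\chi,L/K)$, whose conductor has norm at most $\mathcal{Q}$. A truncated explicit formula yields
\[
\psi(x,\chi) = E(\chi)x - \sum_{|\Im\rho|\leq T}\frac{x^{\rho}}{\rho} + O\Bigl(\tfrac{x}{T}\log(D_K\mathcal{Q}n_K^{n_K} x)\log x\Bigr),
\]
with $E(\chi)\in\{0,1\}$ recording the trivial character. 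Summing over $\chi$ isolates two ``main-like'' contributions: the trivial character yields $\frac{|C|}{|G|}x$, and the Landau--Siegel zero $\beta_1$ of the unique exceptional real $\chi_1$, if it exists, yields $-\frac{|C|}{|G|}\chi_1(g_C)x^{\beta_1} = -\frac{|C|}{|G|}\theta_1 x^{\beta_1}$, which is precisely the secondary term in the statement.

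The bulk of the work is estimating the remaining zero sum, and I would split the zeros into two regimes. Zeros in the classical Linnik--Heilbronn zero-free region $\Re(\rho)\leq 1 - c/\log(D_K\mathcal{Q}n_K^{n_K}(|\Im\rho|+3))$ are handled by a contour shift which, after optimizing $T$, produces an error of size $x\exp(-\Cr{2}\sqrt{\log x/n_K})$, giving the square-root savings in $n_K$ familiar from \eqref{eqn:CDT}. Zeros with $\Re(\rho)>1-\lambda/\log(D_K\mathcal{Q}n_K^{n_K})$ are counted by the log-free zero density estimate of \cite{TZ1,TZ2}, bounding their number by $e^{O(\lambda)}$; combined with the trivial bound $|x^\rho/\rho|\ll x^{1-\lambda/\log(D_K\mathcal{Q}n_K^{n_K})}$ and optimized in $\lambda$, this yields the LMO-style term $x\exp(-\Cr{2}\log x/\log(D_K\mathcal{Q}n_K^{n_K}))$. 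Passing back from $\psi$ to $\pi$ via partial summation converts the $x$ and $x^{\beta_1}$ into $\Li(x)$ and $\Li(x^{\beta_1})$.

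The hardest step is coordinating these two regimes in a single clean estimate while isolating $\beta_1$'s contribution into the main term rather than the error. The log-free density bound must be invoked in a form that excises $\beta_1$ when it exists, and Deuring--Heilbronn zero repulsion must be wielded to preclude the remaining zeros from clustering near $\beta_1$; otherwise, an exceptionally close $\beta_1$ would dominate the zero sum and prevent the two exponentials from fitting together. Once arranged, the two error regimes combine into the two exponentials in \cref{thm:main_theorem}, and the improvement over \eqref{eqn:CDT} and \eqref{eqn:LMO_Linnik} arises precisely from systematically replacing the $L$-level arithmetic data $(D_L, n_L)$ with the typically much smaller $K$-level data $(D_K,\mathcal{Q},n_K)$ throughout the zero-detection step.
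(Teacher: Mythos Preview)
Your overall architecture matches the paper's: reduce $\pi_C(x,L/F)$ to $\pi_{g_C}(x,L/K)$ via the Murty--Murty--Saradha device (\cref{lem:ReductionToAbelian}), expand over Hecke characters of $L/K$, isolate the $\beta_1$-contribution into the main term, and control the remaining zeros by combining the classical zero-free region with a log-free zero density estimate. The paper carries this out with a carefully designed smooth weight (\cref{lem:WeightChoice}) rather than a truncated Perron formula, which streamlines the contour shift and the dyadic zero-sum estimate (\cref{lem:SumOverZeros}) and packages the optimization into the single quantity $\eta(x)$ of \eqref{def:eta_function}; but that is a technical rather than conceptual difference.

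There is, however, a genuine gap in your handling of the exceptional zero. You propose that ``Deuring--Heilbronn zero repulsion must be wielded to preclude the remaining zeros from clustering near $\beta_1$'', treating repulsion as a separate input that enlarges the zero-free region. The paper's \cref{sec:appendix} (especially \cref{rem:SZ}) shows that this is \emph{not} sufficient in general. The issue is that when $\beta_1$ exists, the main term $\Li(x)-\theta_1\Li(x^{\beta_1})$ can be as small as $\nu_1 x/\log x$ with $\nu_1=(1-\beta_1)\log Q$, so for the error to be genuinely multiplicative one must show the non-$\beta_1$ zero sum is $o(\nu_1 x)$. If one uses only the ordinary density estimate $\sum_\chi N(\sigma,T,\chi)\ll (QT^{n_K})^{c(1-\sigma)}$ and applies repulsion merely to enlarge $\Delta(t)$, then in the regime $\lambda_1<Q^{-20/n_K}$ one obtains at best $e^{-\eta(x)}\ll e^{-10\sqrt{\log(1/\lambda_1)}}\cdot(\cdots)$, and $e^{-10\sqrt{\log(1/\lambda_1)}}$ is not $o(\lambda_1)$. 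Stark's bound does not exclude this regime when the root discriminant $D_K^{1/n_K}$ is bounded, as happens in infinite class field towers. The paper resolves this by using Weiss's form of the density estimate (\cref{thm:LFZDE}) in which repulsion is folded directly into the zero count via the prefactor $B_1=\min\{1,(1-\beta_1)\log(QT^{n_K})\}$; this makes the zero sum in \eqref{eqn:SumOverZeros} already carry a factor of $\nu_1$, after which \eqref{eqn:MainTerm_LB} closes the argument uniformly over all $K$. Your sketch must invoke this specific combined density--repulsion estimate rather than the two principles separately.
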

\begin{remark}
As a group-theoretic quantity, $\theta_1$ depends on the choice of $g_C\in C\cap H$.  However, if $\theta_1\neq0$, then the existence of $\beta_1$ implies that $\theta_1$ is well-defined. 
\end{remark}

\subsection{An application}
\label{subsec:application}
While it is aesthetically appealing to be able to encapsulate the work in \cite{LMO,LO,VKM,TZ1,TZ2,Weiss} with a single asymptotic equality, \cref{thm:main_theorem} can make progress in certain sieve-theoretic problems when one must compute the local densities. As an example, we prove a new result in the study of primes represented by binary quadratic forms. Let 
\[
f(u,v) = au^2 + buv + cv^2 \in\Z[u,v]
\]
be a positive definite binary quadratic form of discriminant $D = b^2 - 4ac<0$. We do not assume that $D$ is fundamental. The group $\SL_2(\Z)$ naturally acts on such forms by $(T \cdot f)(\mathbf{x}) = f(T \mathbf{x})$ for $T \in \SL_2(\Z)$.  The class number $h(D)$ is the number of such forms up to $\SL_2$-equivalence.  If $f$ is primitive (that is, $\gcd(a,b,c) = 1$) then it is a classical consequence of the Chebotarev density theorem and class field theory that  
\begin{equation}
\label{eqn:BQF_CDT_asymp}
\frac{1}{|\mathrm{stab}(f)|}\dsum_{\substack{u,v \in \Z \\ au^2 + buv + cv^2 \leq x}} \mathbf{1}_{\mathbb{P}}(au^2 + buv + cv^2) \sim \frac{\mathrm{Li}(x)}{h(D)} \qquad \text{as $x \rightarrow \infty$,}
\end{equation}
where $\mathbf{1}_{\mathbb{P}}$ is the indicator function for the odd primes and
\[
\mathrm{stab}(f) = \{ T \in \SL_2(\Z) : T \cdot f = f \}.
\]
Note $|\stab(f)| = 2$ unless $D = -3$ or $-4$ in which case it equals $6$ and $4$ respectively.

We consider the question of imposing restrictions on the integers $u$ and $v$ which comprise a solution to the equation $p=f(u,v)$.  In the special case of $f(u,v)=u^2+v^2$, Fouvry and Iwaniec \cite{MR1438827} proved that there are infinitely many primes $p$ such that $p=u^2+v^2$ and $u$ is prime. Their proof, which relies on the circle method, enables them to asymptotically count such primes.

One might ask whether their methods extend to all positive definite primitive $f(u,v)$ with strong uniformity in the discriminant $D$. The answer is not clear to the authors.  Nevertheless, \cref{thm:main_theorem} enables us to study the distribution of primes $p=f(u,v)$ with some control over the divisors of $u$ and $v$ while maintaining strong uniformity in $D$.  We prove the following result in \cref{sec:BQF}.
\begin{thm}
\label{thm:application}
Let $D \leq -3$ be an integer and let $f(u,v) = au^2 + buv + cv^2$ be a positive definite primitive integral binary quadratic form with discriminant $D = b^2 - 4ac$.  Let $P$ be any integer dividing the product of primes $p \leq z$.  For all $A\geq 1$, there exists a constant $\eta=\eta(A)>0$ such that if $3 \leq z \leq x^{\eta/\log\log x}$ and $3 \leq |D|\leq x^{\eta/\log\log z}$, then
\begin{equation}
\label{eqn:BQF_primes}
\frac{1}{|\stab(f)|} \dsum_{\substack{u,v \in \Z \\ au^2 + buv + cv^2 \leq x \\ \gcd(uv,P)=1}} \mathbf{1}_{\mathbb{P}}(au^2 + buv + cv^2)= \delta_{f}(P) \frac{\Li(x) - \Li(x^{\beta_1})}{h(D)} \{1 + O_A((\log z)^{-A})\}.
\end{equation}
Here, $\beta_1$ is a real simple zero of the Dedekind zeta function $\zeta_{\Q(\sqrt{D})}(s)$ (if it exists),
\begin{equation}
\delta_{f}(P) =  \prod_{\substack{p \mid P}} \Big(1 - \frac{2 - \mathbf{1}_{p \mid a}(p) - \mathbf{1}_{p \mid c}(p)}{p - (\frac{D}{p})} \Big),
\label{eqn:bqf_euler_product}
\end{equation}
$(\frac{D}{p})$ is the Legendre symbol for $p \neq 2$, $(\frac{D}{2})$ is defined by \eqref{def:Legendre_2}, and the term $\Li(x^{\beta_1})$ is omitted if $\beta_1$ does not exist. 
\end{thm}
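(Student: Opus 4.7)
The plan is to express the left-hand side of \eqref{eqn:BQF_primes} as a Möbius-weighted sum of Chebotarev counts in ring class fields of $\Q(\sqrt{D})$, and then apply \cref{thm:main_theorem} with the uniformity in conductor that the sieve requires. Start with the class field setup: let $K=\Q(\sqrt{D})$ and let $L$ be the ring class field of the order $\mathcal{O}_D\subseteq K$, so that $L/\Q$ is Galois of degree $2h(D)$ with $\mathrm{Gal}(L/K)\cong\mathrm{Cl}(\mathcal{O}_D)$ and $\mathrm{Gal}(L/\Q)$ generalized dihedral. The Deuring--Weber correspondence identifies the primes $p$ with $\gcd(p,2D)=1$ represented by $f$ (weighted by $|\stab(f)|^{-1}$) with unramified primes whose Frobenius in $\mathrm{Gal}(L/\Q)$ lies in a conjugacy class $C_f\subseteq\mathrm{Gal}(L/K)$ determined by the ideal class of $f$. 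Any Hecke character of $L/K$ whose $L$-function could admit a Landau--Siegel zero must descend to the quadratic character of $K/\Q$, so the $\beta_1$ of \cref{thm:main_theorem} coincides with the Landau--Siegel zero of $\zeta_K$, and $\theta_1(C_f)=+1$ on $C_f$.

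Next, apply $\mathbf{1}_{\gcd(uv,P)=1}=\sum_{d\mid P}\mu(d)\,\mathbf{1}_{d\mid uv}$ inside the sum. For a prime $p=f(u,v)>P$ we have $\gcd(u,v)=1$, so each prime $\ell\mid d$ divides exactly one of $u,v$, whence $\mathbf{1}_{d\mid uv}=\sum_{d_1d_2=d}\mathbf{1}_{d_1\mid u,\,d_2\mid v}$ with the sum over coprime squarefree factorizations. The substitution $u=d_1u'$, $v=d_2v'$ identifies the terms with $d_1\mid u,\ d_2\mid v$ with the primes represented by the integral form
\[
g_{d_1,d_2}(x,y)=ad_1^2\,x^2+bd_1d_2\,xy+cd_2^2\,y^2
\]
of discriminant $D(d_1d_2)^2$, which is primitive exactly when $\gcd(d_1,c)=\gcd(d_2,a)=1$; otherwise its content exceeds $1$, so it represents at most one prime and contributes negligibly. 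For each primitive $g_{d_1,d_2}$ I would apply \cref{thm:main_theorem} to $L_{d_1d_2}/\Q$, the ring class field of $\mathcal{O}_{D(d_1d_2)^2}\subseteq K$, with abelian subgroup $H=\mathrm{Gal}(L_{d_1d_2}/K)$; the invariants $n_K=2$, $\log D_K\ll\log|D|$, and $\mathcal{Q}(L_{d_1d_2}/K)\ll|D|(d_1d_2)^2$ yield a main term proportional to $\Li(x)-\Li(x^{\beta_1})$, with the \emph{same} $\beta_1$ throughout. Summing $\mu(d)\sum_{d_1d_2=d}$ over $d\mid P$, the main terms collapse at each $\ell\mid P$ to the local factor $1-\frac{2-\mathbf{1}_{\ell\mid a}-\mathbf{1}_{\ell\mid c}}{\ell-(D/\ell)}$ of $\delta_f(P)$ in \eqref{eqn:bqf_euler_product}, via a direct local Möbius--Euler computation.

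The principal difficulty is controlling the sieve error. The Möbius sum has up to $2^{\omega(P)}$ terms, while termwise use of \cref{thm:main_theorem} requires $x$ to be larger than a fixed power of $|D|(d_1d_2)^2$; thus direct application is available only up to about $\log(d_1d_2)\ll\log x/\log\log z$, and the tail must be handled by a Selberg-type truncation combined with a trivial representation bound. The improved range in \cref{thm:main_theorem} is essential here: \eqref{eqn:CDT} becomes vacuous once $d_1d_2$ exceeds a small power of $|D|$, whereas \cref{thm:main_theorem} retains a usable error throughout the sieve interval. Balancing the two regimes under the hypotheses $z\leq x^{\eta/\log\log x}$ and $|D|\leq x^{\eta/\log\log z}$—together with the consistent identification of $\beta_1$ as the Landau--Siegel zero of $\zeta_K$ across all $L_{d_1d_2}$—yields the main term $h(D)^{-1}(\Li(x)-\Li(x^{\beta_1}))\,\delta_f(P)$ and the claimed error $O_A((\log z)^{-A})$.
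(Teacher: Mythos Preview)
Your reduction to ring class fields, the use of \cref{thm:main_theorem} on the extensions $L_{d_1d_2}/K$, and the identification of $\beta_1$ with the Landau--Siegel zero of $\zeta_K$ (via Heilbronn--Stark: $K$ is the unique quadratic subfield of $L_{d_1d_2}$, so any simple real zero of $\zeta_{L_{d_1d_2}}$ is already a zero of $\zeta_K$, forcing the exceptional Hecke character $\chi_1$ to be trivial and $\theta_1=+1$) all match the paper. The computation of the congruence sums $A_{d_1,d_2}(x)$ and the collapse of the local factors to $\delta_f(P)$ via the class-number formula $h(Dd^2)=h(D)\,[\mathcal{O}^\times:\mathcal{O}_d^\times]^{-1}\,d\prod_{p\mid d}(1-(\tfrac{D}{p})p^{-1})$ are also in line with the paper's \cref{lem:bqfs_congruence_sum}.

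The genuine gap is the sieve step. A direct M\"obius expansion over $d\mid P$ has length up to $2^{\pi(z)}$, and truncating that sum at some level $R$ is not the same as approximating $\mathbf{1}_{\gcd(uv,P)=1}$: the partial M\"obius sums $\sum_{d\mid P,\,d<R}\mu(d)$ oscillate wildly and carry no upper/lower bound property, so neither a ``Selberg-type truncation'' nor a trivial tail bound salvages the asymptotic. What one needs is a pair of \emph{sieve weights} $(\lambda'_{d_1}),(\lambda''_{d_2})$ supported on $d_i<R$ with the property that $\sum_{d_1\mid u}\lambda'_{d_1}$ and $\sum_{d_2\mid v}\lambda''_{d_2}$ majorize/minorize the indicator $\mathbf{1}_{\gcd(\cdot,P)=1}$; this is exactly what the paper does, taking $\Lambda',\Lambda''$ to be beta-sieves with $R=z^{(\log\log z)/\sqrt{\eta}}$. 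The resulting main term is then the \emph{reduced composition}
\[
\mathcal{G}=\dsum_{\gcd(d_1,d_2)=1}\lambda'_{d_1}\lambda''_{d_2}\,g'(d_1)g''(d_2),
\]
and showing $\mathcal{G}=\delta_f(P)(1+O_A((\log z)^{-A}))$ is a nontrivial two-sieve Fundamental Lemma (the paper's \cref{thm:CompositionBetaSieves}), not a direct Euler-product calculation. Your outline skips precisely this machinery; without it the main-term evaluation and the error $O_A((\log z)^{-A})$ are not justified. The remainder $\mathcal{R}$ is then bounded dyadically, using Hildebrand's smooth-number estimate to control $\sum_{d<R^2,\,d\mid P}\tau(d)\varphi(d)^{-1}\epsilon_d(x)$, which is again more delicate than a ``trivial representation bound.''
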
 
\begin{remark}
	The constant $\delta(P)$ is always non-negative. It is possible that $\delta_f(P) = 0$ due to the local factor at $p=2$ in the product but this occurs precisely when the form $f(u,v)$ does not represent any odd primes. Since $\mathbf{1}_{\mathbb{P}}$ is the indicator function for the {\it odd} primes, \eqref{eqn:BQF_primes} trivially holds in this case. The details of this casework are verified in \cref{subsec:bqf_main_term}. 
\end{remark}

While it is natural to think of $P$ as equal to the product of primes up to $z$, we immediately obtain from \cref{thm:application} the following corollary when $P$ is a {\it fixed} divisor of the product of primes up to $z$ and $z \rightarrow \infty$ arbitrarily slowly.

\begin{cor}
\label{cor:application}
	Keep the assumptions of \cref{thm:application}. If the integer $P \geq 1$ is fixed, then 
	\[
	\frac{1}{|\stab(f)|} \dsum_{\substack{u,v \in \Z \\ au^2 + buv + cv^2 \leq x \\ \gcd(uv,P)=1}} \mathbf{1}_{\mathbb{P}}(au^2 + buv + cv^2) \sim \delta_{f}(P) \frac{\Li(x) - \Li(x^{\beta_1})}{h(D)}  \quad \text{as $\frac{\log x}{\log |D|} \rightarrow \infty$}. 
	\]
	In particular, there exists a prime $p \leq |D|^{\alpha}$ and $u,v \in \Z$ such that $p =f(u,v)$, $p\nmid D$, and $\gcd(uv,P) = 1$, where $\alpha=\alpha(P)>0$ is a sufficiently large constant depending only on $P$.
\end{cor}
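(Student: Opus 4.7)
The plan is to derive both parts of \cref{cor:application} from \cref{thm:application} by judiciously choosing the parameter $z$.

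\textbf{Asymptotic formula.} Since $P$ is fixed, let $z_0=z_0(P)$ denote the largest prime factor of $P$ (or $3$, whichever is larger); for any $z\geq z_0$, the integer $P$ divides the product of primes $\leq z$, and the quantity $\delta_f(P)$ in \eqref{eqn:bqf_euler_product} is independent of $z$. Fix $A\geq 1$ and let $\eta=\eta(A)$ be as in \cref{thm:application}. Because $\log x/\log|D|\to\infty$, one can pick a slowly growing function $z=z(x,D)\to\infty$ (for instance $z=\min(|D|^{1/2},(\log x)^{1/2})$) satisfying $z\geq z_0$, $z\leq x^{\eta/\log\log x}$, and $|D|\leq x^{\eta/\log\log z}$ for $x$ sufficiently large. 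Inserting this choice into \cref{thm:application} yields \eqref{eqn:BQF_primes} with error term $O_A((\log z)^{-A})\to 0$, which establishes the asymptotic.

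\textbf{Existence of a small prime.} For the second assertion I set $x=|D|^\alpha$ and keep $z=z_0$ fixed. The hypothesis $|D|\leq x^{\eta/\log\log z_0}$ becomes $\log\log z_0\leq \eta\alpha$, which holds once $\alpha$ is sufficiently large in terms of $P$. The main term in \eqref{eqn:BQF_primes} is bounded below by combining \eqref{eqn:2_stark} applied to $L=\Q(\sqrt{D})$ (so $n_L=2$, $D_L\asymp|D|$), which gives $\Li(x)-\Li(x^{\beta_1})\gg |D|^{-1/2}(\log|D|)\Li(x)$, with the classical bound $h(D)\ll|D|^{1/2}\log|D|$. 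These combine to give
\[
\delta_f(P)\frac{\Li(x)-\Li(x^{\beta_1})}{h(D)}\gg \delta_f(P)\frac{\Li(x)}{|D|}\gg \delta_f(P)\frac{|D|^{\alpha-1}}{\alpha\log|D|}.
\]
Choosing $A$ large so that the relative error factor $(\log z_0)^{-A}<\tfrac{1}{2}$, and then choosing $\alpha=\alpha(P)$ sufficiently large, forces the left-hand side of \eqref{eqn:BQF_primes} to exceed $1$ for $|D|$ sufficiently large, hence produces the desired prime. The residual case of bounded $|D|$ is handled by the classical asymptotic \eqref{eqn:BQF_CDT_asymp} augmented by an elementary sieve for the condition $\gcd(uv,P)=1$; here $\beta_1$ is either absent or bounded away from $1$, so the main term is $\gg \Li(x)/h(D)\gg 1$ for $x$ large enough.

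\textbf{Main obstacle.} The principal technical point is ensuring that $\delta_f(P)$ admits a uniform positive lower bound depending only on $P$ whenever the sum on the left side of \eqref{eqn:BQF_primes} is not identically zero. The remark following \cref{thm:application} isolates the only source of $\delta_f(P)=0$ as the local factor at $p=2$, in which case $f$ represents no odd primes at all and there is nothing to prove. When $\delta_f(P)>0$, a factor-by-factor inspection of \eqref{eqn:bqf_euler_product} shows that each Euler factor at $p\mid P$ is bounded below by an explicit constant depending only on $p$, so $\delta_f(P)\geq c(P)>0$, which yields the claimed dependence $\alpha=\alpha(P)$.
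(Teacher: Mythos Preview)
Your approach to the asymptotic---letting $z\to\infty$ slowly while $P$ stays fixed---is precisely what the paper does; in fact the paper offers only the one-sentence justification that the corollary follows from \cref{thm:application} ``when $P$ is a fixed divisor of the product of primes up to $z$ and $z\to\infty$ arbitrarily slowly.'' Your explicit choice $z=\min(|D|^{1/2},(\log x)^{1/2})$ works once $|D|\to\infty$, and for bounded $|D|$ you correctly defer to the classical result.

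There is, however, a genuine slip in your argument for the least prime. You fix $z=z_0(P)$ and then write ``Choosing $A$ large so that the relative error factor $(\log z_0)^{-A}<\tfrac12$.'' But the error in \cref{thm:application} is $O_A((\log z)^{-A})$ with an implied constant $C_A$ depending on $A$; the actual relative error is $C_A(\log z_0)^{-A}$, and you have no control over the growth of $C_A$. It is therefore not clear that this product can be made $<\tfrac12$ by sending $A\to\infty$. The clean fix is to reverse the roles: fix $A=1$ once and for all, and take $z=\max(z_0(P),z_1)$ with $z_1$ an absolute constant large enough that $C_1(\log z_1)^{-1}<\tfrac12$. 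Since $P$ still divides the product of primes $\leq z$, your subsequent estimates go through unchanged, and $\alpha$ then depends only on $P$ via $z_0(P)$ and the absolute constants $C_1$, $\eta(1)$. Alternatively---and more in the spirit of the paper---you can simply deduce the second assertion from the first: the asymptotic gives a threshold $M=M(P)$ such that $\log x/\log|D|>M$ forces the left side to exceed half the main term, after which your lower bound $\gg_P |D|^{\alpha-1}/(\alpha\log|D|)$ finishes the job.

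One minor inaccuracy in your ``main obstacle'' paragraph: the remark following \cref{thm:application} does not claim that $p=2$ is the \emph{only} prime at which the local factor in $\delta_f(P)$ can vanish; the factor at $p=3$ also vanishes when $3\nmid ac$ and $D\equiv 1\pmod 3$. Your conclusion that $\delta_f(P)\geq c(P)>0$ whenever $\delta_f(P)>0$ is nonetheless correct, since each nonzero local factor is at least $1/3$ for $p\in\{2,3\}$ and at least $1-2/(p-1)$ for $p\geq 5$.
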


In order to prove \cref{thm:application} with strong uniformity in $z$ and $|D|$, one needs asymptotic control over sums like \eqref{eqn:BQF_CDT_asymp} (see \eqref{def:bqf_congruence_sum} below) when $x$ is as small as a polynomial in the discriminant, regardless of whether $\zeta_{\Q(\sqrt{D})}(s)$ has a Landau-Siegel zero.  This is precisely what \cref{thm:main_theorem} provides.  For comparison, a slightly stronger version of \eqref{eqn:CDT} that follows from  \cite{VKM} along with the effective bound $(1-\beta_1)^{-1}\ll |D|^{1/2}\log|D|$ can produce \eqref{eqn:BQF_primes} with the inferior ranges 
\[
3\leq |D|\ll_{\epsilon}(\log x)^{2}/(\log\log x)^2 \qquad \text{and} \qquad  3\leq z\leq \exp(c\sqrt{\log x})
\] 
where $c>0$ is an absolute constant and $\epsilon>0$. As one can plainly see, \cref{thm:main_theorem} yields substantial gains over earlier versions of the Chebotarev density theorem. See \cref{remark:bqf_cdt} for further discussion. 

\subsection{Overview of the methods}
We now give an overview of how the proof of \cref{thm:main_theorem} differs from the proofs in \cite{LMO,LO,VKM,TZ1,TZ2,Weiss}.  For convenience, we refer to
\[
\frac{|C|}{|G|}\Big(\mathrm{Li}(x)-\theta_1\mathrm{Li}(x^{\beta_1})\Big)
\]
as the ``main term'' in \cref{thm:main_theorem} and all other terms   as the ``error term''.

The key difference between the proof of \eqref{eqn:CDT} and the proof of \cref{thm:main_theorem} lies in the study of the non-trivial low-lying zeros of $\zeta_L(s)$.  The standard zero-free region for $\zeta_L(s)$ indicates that the low-lying zeros of $\zeta_L(s)$ lie further away from the edge of the critical strip $\{s\in\mathbb{C}\colon 0<\Re(s)<1\}$ than zeros of large height.  However, the treatments in \cite{LO,VKM} handle the contribution from the all of the non-trivial zeros by assuming that the low-lying zeros (other than $\beta_1$, if it exists) lie just as close to the edge of the critical strip as zeros of large height.  This unduly inflates the contribution from the low-lying zeros, leading to the poor field uniformity in \eqref{eqn:CDT} along with the poor dependence on the Landau-Siegel zero $\beta_1$ if it exists.  Consequently, both the range of $x$ and the quality the error term in \eqref{eqn:CDT} directly depend on the quality of zero-free region available for $\zeta_L(s)$.

In order to efficiently handle the contribution to $\pi_C(x)$ which arises from the low-lying zeros of $\zeta_L(s)$, we factor $\zeta_L(s)$ as a product of Hecke $L$-functions associated to the Hecke characters of the abelian extension $L/K$ and apply a log-free zero density estimate and the zero repulsion phenomenon for these $L$-functions. As in  Linnik's work on arithmetic progessions, one typically uses these tools to establish upper and lower bounds of $\pi_C(x)$ when $x$ is small instead of asymptotic equalities \cite{TZ1,TZ2,Weiss}.  In order to facilitate the analysis involving the log-free zero density estimate, we weigh the contribution of each prime ideal counted by $\pi_C(x)$ with a weight whose Mellin transform has carefully chosen decay properties (\cref{lem:WeightChoice}).  Similar variations are a critical component in the proofs of \eqref{eqn:LMO_Linnik} in \cite{TZ1,TZ2,Weiss}.

By using a log-free zero density estimate and the zero repulsion phenomenon, we ensure that the main term in \cref{thm:main_theorem} \textit{always} dominates the error term in \cref{thm:main_theorem} when $x$ is at least a polynomial in $D_K\mathcal{Q}n_{K}^{n_{K}}$, regardless of whether $\beta_1$ exists.  As one can see from the ensuing analysis, the quality of the zero-free region dictates the quality of the error term but has no direct impact on the valid range of $x$. This ``decoupling'' feature contrasts with the proof of \eqref{eqn:CDT}, where the quality of the zero-free region simultaneously determines both the quality of the error term and the range of $x$ in which the main term dominates.

After we ``decouple'' the range of $x$ from the influence of the zero-free region, we are finally prepared to separate the contribution of the low-lying zeros from the contribution of the zeros with large height using a dyadic decomposition. This leads to savings over \eqref{eqn:CDT} only because we have already ensured via the log-free zero density estimate and zero repulsion that the main term in \cref{thm:main_theorem} dominate the error term regardless of whether $\beta_1$ exists. An additional benefit of this argument is an expression for the error term in \cref{thm:main_theorem} as a straightforward single-variable optimization problem involving $x$ and the zero-free region (\cref{lem:SumOverZeros} and \eqref{eqn:WeightPrimeSum_Penultimate}). This simplification allows us to easily determine the error term with complete uniformity in $D_K$, $[K:\Q]$, $\mathcal{Q}$, and $x$ (\cref{lem:ErrorOptimization_Classical}).

The fact that \cref{thm:main_theorem} holds for {\it all} Galois extensions $L/F$ is a fairly subtle matter.  In the case where $F=\Q$ and $L/\Q$ is a cyclotomic extension,  the Chebotarev density theorem reduces to the prime number theorem for arithmetic progressions.  Stark's bound for $\beta_1$ (\cref{thm:Stark}, a refinement of \eqref{eqn:Stark}) recovers a lower bound for $1-\beta_1$ which is commensurate with the lower bound for $1-\beta_1$ that follows from Dirichlet's analytic class number formula for cyclotomic extensions; this suffices for our purposes.  In the cyclotomic setting, our proofs only need to quantify the zero repulsion from a Landau-Siegel zero with a strong zero-free region for low-lying zeros (\cref{thm:DH} with $t\leq 4$). However, if $L/F$ is a Galois extension where the root discriminant of $L$ is especially small, which can happen in infinite class field towers, then Stark's lower bound for $1-\beta_1$ is quite small.  In this case, the approach which worked well for cyclotomic extensions of $\Q$ appears insufficient to prove \cref{thm:CDT_1} for all $x$ in our claimed range.

To address this problem, we use a log-free zero density estimate for Hecke $L$-functions that naturally incorporates the zero repulsion phenomenon.  Roughly speaking, when $\beta_1$ is especially close to 1, the quality of the log-free zero density estimate improves by a factor of $1-\beta_1$; this is stronger than the classical formulation of the zero repulsion phenomenon.  Therefore, if $1-\beta_1$ happens to be as small as Stark's lower bound allows, the quality of the log-free zero density estimate increases dramatically.  This offsets the adverse effect of $\beta_1$ in the small root discriminant case.  The idea of incorporating the zero repulsion phenomenon directly into the log-free zero density estimate goes back to Bombieri \cite{Bombieri2} in the case of Dirichlet characters.  For Hecke $L$-functions over number fields, this was first proved by Weiss (see \cref{thm:LFZDE} below). The details of this obstacle and why we genuinely need the particular log-free zero density estimate in \cref{thm:LFZDE} are contained in \cref{sec:appendix}, especially \cref{rem:SZ}.

\subsection*{Acknowledgements}
We thank Kannan Soundararajan for helpful discussions and the anonymous referee for providing very thorough comments on our initial submission.  Jesse Thorner is partially supported by a NSF Mathematical Sciences Postdoctoral Fellowship, and Asif Zaman is partially supported by a NSERC Postdoctoral Fellowship.

\section{Setup and notation}
\label{sec:Setup}
 
Throughout the paper, let $c_1$, $c_2$, $c_3,\ldots$ be a sequence of absolute, effective, and positive constants.  All implied constants in the inequalities $f\ll g$ and $f=O(g)$ are absolute and effective unless noted otherwise.

Recall $F$ is a number field with ring of integers $\cO_F$, absolute norm $\N = \N_{F/\Q}$, absolute discriminant $D_F = |\mathrm{disc}(F/\Q)|$, and degree $n_F = [F:\Q]$. Integral ideals will be denoted by $\kn$ and prime ideals by $\kp$.  Moreover, $L/F$ is a Galois extension of number fields with  Galois group $G = \mathrm{Gal}(L/F)$. For prime ideals $\kp$ of $F$ unramified in $L$, the Artin symbol $[\frac{L/F}{\kp}]$ is the conjugacy class of Frobenius automorphisms of $G$ associated to prime ideals $\mathfrak{P}$ of $L$ lying above $\kp$.

\subsection{Prime counting functions}  For a conjugacy class $C$ of $G$ and $x \geq 2$, let $\pi_C(x)$ be as in \eqref{def:pi_C} and define
	\begin{equation}
	\psi_C(x) = \psi_C(x,L/F) = \frac{|C|}{|G|}\sum_{\psi} \bar{\psi}(C)  \frac{1}{2\pi i} \int_{2-i\infty}^{2+i\infty} -\frac{L'}{L}(s,\psi,L/F) \frac{x^s}{s} ds,
	\label{def:psi_C_integral}	
	\end{equation}
	where $\psi$ runs over the irreducible Artin characters of $G = \mathrm{Gal}(L/F)$ and $L(s,\psi,L/F)$ is the Artin $L$-function of $\psi$. It follows from Mellin inversion \cite[p.283]{LMO} that
	\begin{equation}
		\psi_C(x) = \sum_{\N\kn \leq x} \Lambda_F(\kn) \mathbf{1}_C(\kn), \\
		\label{def:psi_C}
	\end{equation}
	where
	\begin{equation}
		\Lambda_F(\kn) = \begin{cases} \log \N\kp & \text{if $\kn=\kp^j$ for some prime ideal $\kp$ and some integer $j\geq1$,} \\ 0 & \text{otherwise}.	
 	\end{cases}
	\end{equation}
	Here, $0 \leq \mathbf{1}_C(\kn) \leq 1$ for all ideals $\kn$ and for prime ideals $\kp$ unramified in $L$ and $j \geq 1$,
	\begin{equation}
		\mathbf{1}_C(\kp^j) = \begin{cases} 
 					1 & \text{if $[\frac{L/F}{\kp}]^j \subseteq C$,} \\
 					0 & \text{otherwise}.
				 \end{cases} 
	\end{equation}
 	The prime counting functions $\pi_C$ and $\psi_C$ are related via partial summation.
	\begin{lem} \label{lem:Pi_to_Psi}
		For $x \geq 2$,
		\[
		\pi_C(x) = \frac{\psi_C(x)}{\log x} + \int_{\sqrt{x}}^x \frac{\psi_C(t)}{t (\log t)^2} dt + O\Big(\log D_L + \frac{n_F x^{1/2}}{\log x} \Big). 
		\]	
	\end{lem}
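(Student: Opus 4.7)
The plan is to introduce a ``prime-only'' counting function that is directly related to $\pi_C$ by Riemann--Stieltjes integration by parts, then to show that replacing it by the full $\psi_C(t)$ introduces only acceptable errors.

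First I would define
\[
\widetilde\psi_C(t) = \sum_{\substack{\N\kp\le t \\ \kp \text{ unramified in } L \\ [\frac{L/F}{\kp}]=C}} \log\N\kp,
\]
which is exactly the ``unramified $j=1$'' portion of $\psi_C$ as expressed in \eqref{def:psi_C}. Since $\pi_C(x)$ counts the jumps of $\widetilde\psi_C$ weighted by $1/\log\N\kp$, Riemann--Stieltjes integration by parts immediately gives
\[
\pi_C(x) = \int_{2^-}^{x} \frac{d\widetilde\psi_C(t)}{\log t} = \frac{\widetilde\psi_C(x)}{\log x} + \int_{2}^{x} \frac{\widetilde\psi_C(t)}{t(\log t)^2}\,dt.
\]

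Next I would bound the discrepancy $\psi_C(t)-\widetilde\psi_C(t)$, which consists of two parts: the ramified-prime contribution and the higher prime-power $(j\ge 2)$ contribution. For the ramified primes, $|\mathbf{1}_C(\kp)|\le 1$ reduces matters to $\sum_{\kp \text{ ramified in }L}\log\N\kp$, which is bounded by $\log D_L$ via the tower formula $D_L = D_F^{[L:F]}\,N_{F/\Q}(\mathfrak d_{L/F})$ together with $N_{F/\Q}(\mathfrak d_{L/F}) \ge \prod_{\kp\text{ ramified}}\N\kp$. For the higher prime powers, the bound $\sum_{\N\kp\le y}\log\N\kp\ll n_F y$ (which follows from $\sum_{\kp\mid p} f_\kp \le n_F$ and Chebyshev's estimate) combined with $\sum_{j\ge 2}t^{1/j}\ll t^{1/2}$ yields a bound of $\ll n_F t^{1/2}$. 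Thus $\psi_C(t)-\widetilde\psi_C(t) = O(\log D_L + n_F t^{1/2})$.

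Finally, I would split $\int_2^x = \int_2^{\sqrt x}+\int_{\sqrt x}^x$. For the short range I use the trivial bound $\widetilde\psi_C(t)\le \sum_{\N\kp\le t}\log\N\kp \ll n_F t$ and the elementary estimate $\int_2^{\sqrt x}dt/(\log t)^2\ll \sqrt x/(\log x)^2$, which gives an error of size $O(n_F\sqrt x/\log x)$. For the long range, I would substitute $\widetilde\psi_C(t)=\psi_C(t)+O(\log D_L + n_F t^{1/2})$ and exploit $\int_{\sqrt x}^x dt/(t(\log t)^2)=1/\log x$ together with $\int_{\sqrt x}^x dt/(\sqrt t(\log t)^2)\ll \sqrt x/(\log x)^2$ to absorb all new errors into $O(\log D_L + n_F\sqrt x/\log x)$. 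Collecting terms yields the stated formula. There is no serious obstacle; the one point requiring care is the ramified-prime discriminant bound, which is classical.
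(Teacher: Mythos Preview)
Your proposal is correct and follows essentially the same approach as the paper: partial summation between $\pi_C$ and a Chebyshev-type function, together with the bounds $\sum_{\kp\text{ ramified}}\log\N\kp\ll\log D_L$ and $\sum_{j\ge 2}\sum_{\N\kp^j\le t}\log\N\kp\ll n_F t^{1/2}$. The only cosmetic difference is ordering: the paper first discards primes with $\N\kp\le\sqrt{x}$ and then applies partial summation on $(\sqrt{x},x]$ to the function $\theta_C(t)=\sum_{\N\kp\le t}\mathbf{1}_C(\kp)\log\N\kp$ (which includes ramified primes), whereas you apply partial summation on $[2,x]$ to an unramified-only function and split the integral at $\sqrt{x}$ afterwards.
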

	\begin{proof}
		Note the norm of the product of ramified prime ideals divides $D_L$ and the number of prime ideals $\kp$ with norm equal to a given rational prime $p$ is at most $n_F$. Thus, 
		\[
		\pi_C(x) = \sum_{\sqrt{x} < \N\kp \leq x} \mathbf{1}_C(\kp)  + O\Big(\frac{n_F x^{1/2}}{\log x} + \log D_L\Big). 
		\]
		Define $\theta_C(x) = \sum_{\N\kp \leq x} \mathbf{1}_C(\kp) \log \N\kp$. It follows by partial summation as well as the previous observations that 
		\[
		\sum_{\sqrt{x} < \N\kp \leq x} \mathbf{1}_C(\kp) = \int_{\sqrt{x}}^x \frac{\theta_C(t)}{t(\log t)^2} dt + \frac{\theta_C(x)}{\log x}
		\]
		Finally, one can verify that $|\theta_C(x) - \psi_C(x)|\ll n_F x^{1/2}$ by trivially estimating the number of prime ideal powers with norm at most $x$. Collecting all of these estimates yields the lemma.
	\end{proof}
	
	\subsection{Choice of weight}
We now define a weight function which will be used to count prime ideals with norm between $\sqrt{x}$ and $x$.

\begin{lem}
\label{lem:WeightChoice}

Choose $x \geq 3$, $\epsilon \in (0,1/4)$, and a positive integer $\ell \geq 1$.  Define $A = \epsilon/(2 \ell \log x)$.  There exists a continuous function $f(t)  = f(t; x, \ell, \epsilon)$ of a real variable $t$ such that:
\begin{enumerate}[(i)]
	\item $0 \leq f(t) \leq 1$ for all $t \in \R$, and $f(t) \equiv 1$ for $\tfrac{1}{2} \leq t \leq 1$.
	\item The support of $f$ is contained in the interval $[\tfrac{1}{2} - \frac{\epsilon}{\log x}, 1 +  \frac{\epsilon}{\log x}]$. 
	\item Its Laplace transform $F(z) = \int_{\R} f(t) e^{-zt}dt$ is entire and is given by
			\begin{equation}	
				F(z) = e^{-(1+ 2\ell A)z} \cdot \Big( \frac{1-e^{(\frac{1}{2}+2\ell A)z}}{-z} \Big) \Big( \frac{1-e^{2Az}}{-2Az} \Big)^{\ell}.
				\label{eqn:WeightLaplace}
			\end{equation}
	\item Let $s = \sigma + i t, \sigma > 0, t \in \R$ and $\alpha$ be any real number satisfying $0 \leq \alpha \leq \ell$. Then
	\[
	|F(-s\log x)|\leq 
		\displaystyle\frac{e^{\sigma \epsilon} x^{\sigma}}{|s| \log x} \cdot \big( 1 + x^{-\sigma/2} \big) \cdot  \Big( \frac{2\ell}{\epsilon|s|} \Big)^{\alpha}. 
	\]
	Moreover, $|F(-s\log x)| \leq e^{\sigma \epsilon} x^{\sigma}$ and $1/2 < F(0) < 3/4$. 
	\item If $\tfrac{3}{4} < \sigma \leq 1$ and $x \geq 10$, then
		  \begin{equation}
		F(-\log x) \pm F(-\sigma\log x) = \Big( \frac{x}{\log x} \pm   \frac{x^{\sigma}}{\sigma \log x} \Big) \big\{ 1 + O(\epsilon) \big\} + O\Big(\frac{x^{1/2}}{\log x}\Big). 
		\label{eqn:WeightChoice_MainTerm} 
		  \end{equation}
	\item Let $s = -\tfrac{1}{2}+it$ with $t \in \R$. Then
	\[
	|F(-s\log x)| \leq  \frac{5 x^{-1/4}}{\log x} \Big( \frac{2\ell}{\epsilon}\Big)^{\ell} (1/4+t^2)^{-\ell/2}.
	\]
\end{enumerate}
\end{lem}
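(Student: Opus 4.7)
The plan is to construct $f$ as a translate of a convolution of indicator functions, which makes the Laplace transform in (iii) come out automatically via the convolution theorem, and to extract properties (i)--(ii) from the support behavior of the convolution. For the remaining estimates (iv)--(vi), I would substitute $z = -s\log x$ into the explicit formula for $F$ and bound the three factors $e^{-(1+2\ell A)z}$, $(1-e^{(1/2+2\ell A)z})/(-z)$, and $((1-e^{2Az})/(-2Az))^\ell$ individually, exploiting the key identities $2\ell A\log x = \epsilon$ and $2A\log x = \epsilon/\ell$ throughout.

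First I would set $h_1 = \mathbf{1}_{[-(1/2+2\ell A),0]}$ and $h_2 = \frac{1}{2A}\mathbf{1}_{[-2A,0]}$, and define $f(t) = (h_1 * h_2^{*\ell})(t - (1 + 2\ell A))$, where $h_2^{*\ell}$ is the $\ell$-fold convolution of $h_2$ with itself. The Laplace transforms of $h_1$ and $h_2$ are precisely $(1-e^{(1/2+2\ell A)z})/(-z)$ and $(1-e^{2Az})/(-2Az)$, so the translation and convolution rules for the Laplace transform give (iii) immediately. Since $h_2^{*\ell}$ is a probability density supported on $[-2\ell A,0]$, standard support computations show that $h_1 * h_2^{*\ell}$ is supported on $[-(1/2+4\ell A),0]$ and identically equal to $1$ on $[-1/2 - 2\ell A, -2\ell A]$; after translation this yields (ii) and the equality part of (i). The trivial bound $0\leq f\leq 1$ follows from $f \leq \int h_2^{*\ell} = 1$, and $F(0) = (\int h_1)(\int h_2^{*\ell}) = 1/2 + \epsilon/\log x$ lies in $(1/2, 3/4)$ under the hypotheses $\epsilon < 1/4$, $x \geq 3$.

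For (iv), substituting $z = -s\log x$ gives $|e^{-(1+2\ell A)z}| = e^{\sigma\epsilon}x^{\sigma}$. Since $\Re((1/2+2\ell A)z) < 0$, the triangle inequality yields $|(1-e^{(1/2+2\ell A)z})/(-z)| \leq (1 + x^{-\sigma/2})/(|s|\log x)$. For the third factor I would use both the integral representation $(1-e^w)/(-w) = \int_0^1 e^{tw}\,dt$ (which gives $|(1-e^{2Az})/(-2Az)| \leq 1$) and the triangle-inequality bound $|(1-e^{2Az})/(-2Az)| \leq 2/|2Az| = 2\ell/(\epsilon|s|)$, combining them via the elementary inequality $\min(1,y)^\ell \leq y^\alpha$ valid for $0\leq \alpha\leq \ell$ and $y\geq 0$. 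The auxiliary bound $|F(-s\log x)| \leq e^{\sigma\epsilon}x^\sigma$ follows from $|F(z)|\leq F(0)\cdot\max_{t\in \supp(f)}e^{-\Re(z)t}$. For (v), I would substitute $z = -\sigma\log x$, separate the ``$1$'' from $1 - x^{-(1/2+2\ell A)\sigma}$ to produce the main term $x^\sigma/(\sigma\log x)$ and the error $O(x^{1/2}/\log x)$, and expand the correction $((1-e^{-\sigma\epsilon/\ell})/(\sigma\epsilon/\ell))^\ell = 1 + O(\epsilon)$ by taking logarithms. For (vi) with $s = -1/2 + it$, the first factor evaluates to $x^{-1/2}e^{-\epsilon/2}$ while $|1 - e^{(1/2+2\ell A)z}|\leq 2x^{1/4}e^{\epsilon/2}$ for $x\geq 10$; bounding the third factor by $(1 + e^{\epsilon/(2\ell)})^\ell \leq 2\cdot 2^\ell$ (which follows from $\log((1+e^a)/2)\leq ae^a/2$ and $a = \epsilon/(2\ell) < 1/8$), then using $|s|^{-\ell-1}\leq 2(1/4 + t^2)^{-\ell/2}$, yields the claimed $(2\ell/\epsilon)^\ell(1/4+t^2)^{-\ell/2}$ decay with an absolute constant below~$5$.

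The main obstacle is purely bookkeeping: tracking the exponentials $x^{(1/2+2\ell A)\sigma}$, $x^{2A\sigma}$, and $e^{2\ell A\sigma\log x}$ that arise from each factor and verifying that the resulting correction factors of the form $e^{O(\epsilon)}$ and $(1 + O(\epsilon/\ell))^\ell = 1 + O(\epsilon)$ are absorbed into the claimed $1 + O(\epsilon)$ in (v) and the absolute constant $5$ in (vi). There is no deep analytic content beyond this careful elementary computation; the conceptual work is entirely in the choice of $f$ as a convolution whose Laplace transform factors as a product whose $\ell$ identical factors each contribute a $(2\ell/\epsilon|s|)$-decay, giving the flexibility to optimize in $\ell$ later.
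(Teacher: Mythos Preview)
Your construction of $f$ and your treatment of parts (i)--(iv) and (vi) are correct and match what the paper cites from \cite[Lemma~2.2]{TZ2}. The paper itself only proves (v), and that is where your sketch has a genuine gap.

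Your plan for (v) is to show
\[
F(-\sigma\log x)=\frac{x^{\sigma}}{\sigma\log x}\bigl(1+O(\epsilon)\bigr)+O\Bigl(\frac{x^{1/2}}{\log x}\Bigr)
\]
for each $\sigma\in(3/4,1]$ separately, then add or subtract. This works for the $+$ case because both main terms are positive. For the $-$ case it fails: subtracting gives an error of size $O\bigl(\epsilon\cdot x/\log x\bigr)$, whereas the statement demands $O\bigl(\epsilon\cdot(x/\log x - x^{\sigma}/(\sigma\log x))\bigr)$, and as $\sigma\to 1$ the latter quantity goes to zero while the former does not. The $O(\epsilon)$ errors in the two individual expansions are \emph{different} constants, so there is no cancellation.

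The paper handles the $-$ case by writing the correction factors as $a^{\ell}$ and $b^{\ell}$ with $a=(e^{\epsilon/\ell}-1)/(\epsilon/\ell)$ and $b=(e^{\sigma\epsilon/\ell}-1)/(\sigma\epsilon/\ell)$, then bounding
\[
a^{\ell}-b^{\ell}\ll(a-b)\,\ell a^{\ell},\qquad a-b\ll(1-\sigma)\frac{\epsilon}{\ell},
\]
and finally using the inequality $\dfrac{x^{\sigma}}{\sigma}(1-\sigma)\leq\sigma\Bigl(x-\dfrac{x^{\sigma}}{\sigma}\Bigr)$, valid for $\sigma>3/4$ and $x\geq 10$. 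These three steps together convert the cross-term $(x^{\sigma}/\sigma)(a^{\ell}-b^{\ell})$ into $O\bigl(\epsilon(x-x^{\sigma}/\sigma)\bigr)$, which is exactly what the multiplicative $1+O(\epsilon)$ on the \emph{difference} requires. Your proposal is missing this mechanism.
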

\begin{proof}
These are the contents of \cite[Lemma 2.2]{TZ2} except for \eqref{eqn:WeightChoice_MainTerm}, which we now prove. Let $\frac{3}{4}<\sigma\leq 1$.  From (iii), we observe that
\begin{equation}
F(-\sigma \log x) = \frac{x^{\sigma}}{\sigma \log x} \Big( \frac{e^{\epsilon\sigma/\ell}-1}{\epsilon\sigma/\ell}\Big)^{\ell} + O\Big(\frac{x^{\sigma/2}}{\sigma \log x}\Big).
\label{eqn:WeightChoice_Real}
\end{equation}
The two cases of $F(-\log x)\pm F(-\sigma \log x)$ are proved differently; we first handle the $+$ case.  It follows from \eqref{eqn:WeightChoice_Real} that
\[
F(-\log x)+F(-\sigma \log x)=\frac{x}{\log x} \Big( \frac{e^{\epsilon/\ell}-1}{\epsilon/\ell}\Big)^{\ell}+\frac{x^{\sigma}}{\sigma \log x} \Big( \frac{e^{\epsilon\sigma/\ell}-1}{\epsilon\sigma/\ell}\Big)^{\ell} + O\Big(\frac{x^{\sigma/2}}{\sigma \log x}\Big).
\]
The desired asymptotic for $F(-\log x)+F(-\sigma\log x)$ now follows from the Taylor series expansion
\[
\Big( \frac{ e^{\epsilon\sigma/\ell}-1}{\epsilon\sigma/\ell}\Big)^{\ell} = 1 + O(\sigma \epsilon),
\]
which is valid for $0<\sigma\leq 1$.

For the case of $F(-\log x)- F(-\sigma \log x)$, we first observe that \eqref{eqn:WeightChoice_Real} implies
\begin{equation}
\begin{aligned}
(\log x) ( F(-\log x) - F(-\sigma \log x) ) 
	& = x \Big( \frac{e^{\epsilon/\ell}-1}{\epsilon/\ell}\Big)^{\ell}  - \frac{x^{\sigma}}{\sigma} \Big( \frac{e^{\epsilon\sigma/\ell}-1}{\epsilon\sigma/\ell}\Big)^{\ell}  + O(x^{1/2}).
\end{aligned}
\label{eqn:WeightChoice_MainTerm_Diff}
\end{equation}
Set
\[
a= \frac{e^{\epsilon/\ell}-1}{\epsilon/\ell},\qquad b=\frac{e^{\sigma\epsilon/\ell}-1}{\sigma\epsilon/\ell}
\]
so that $a > b \geq 1$.  With this convention, we rewrite \eqref{eqn:WeightChoice_MainTerm_Diff} as
\begin{equation}
\label{eqn:blue_harvest_1}
(\log x) ( F(-\log x) - F(-\sigma \log x) ) =x a^{\ell}-\frac{x^{\sigma}}{\sigma}b^{\ell}+O(x^{1/2}).
\end{equation}
Since $a>b\geq 1$, it follows from the bound $a^{\ell}-b^{\ell}\ll(a-b)\cdot \ell a^{\ell}$ that
\begin{equation}
\label{eqn:blue_harvest_2}
x a^{\ell}-\frac{x^{\sigma}}{\sigma}b^{\ell}=\Big(x -\frac{x^{\sigma}}{\sigma}\Big)a^{\ell}+\frac{x^{\sigma}}{\sigma}(a^{\ell}-b^{\ell})=\Big(x-\frac{x^{\sigma}}{\sigma}\Big)a^{\ell}+O\Big(\frac{x^{\sigma}}{\sigma}(a-b)\ell a^{\ell}\Big).
\end{equation}
Since $\frac{3}{4}<\sigma\leq 1$, it follows from taking Taylor series expansions that $a^{\ell}=1+O(\epsilon)$ and
\[
a-b = \sum_{n=1}^{\infty} \frac{(1-\sigma^n)(\epsilon/\ell)^n}{(n+1)!} \leq \sum_{n=1}^{\infty} \frac{n (1-\sigma)(\epsilon/\ell)^n}{(n+1)!} \ll  (1-\sigma) \frac{\epsilon}{\ell}.
\]
We apply these two Taylor expansions to \eqref{eqn:blue_harvest_1} and \eqref{eqn:blue_harvest_2} to obtain
\begin{equation}
\label{eqn:blue_harvest_3}
(\log x) ( F(-\log x) - F(-\sigma \log x) )=\Big(x - \frac{x^{\sigma}}{\sigma}\Big) (1 + O(\epsilon) ) + O\Big(\frac{x^{\sigma}}{\sigma} (1-\sigma)  \epsilon \Big)+O(x^{1/2}).
\end{equation}
Finally, we observe that since $\sigma^{-2} x^{\sigma} \leq x$ for $\sigma > 3/4$ and $x \geq 10$, we have that
\[
\frac{x^{\sigma}}{\sigma} (1-\sigma)=\sigma\Big(\frac{x^{\sigma}}{\sigma^2} - \frac{x^{\sigma}}{\sigma}\Big) \leq \sigma \Big( x - \frac{x^{\sigma}}{\sigma}\Big).
\]
We apply this observation to \eqref{eqn:blue_harvest_3} to obtain
\begin{equation}
	\label{eqn:blue_harvest_4}
	(\log x) ( F(-\log x) - F(-\sigma \log x) )=\Big(x-\frac{x^{\sigma}}{\sigma}\Big)(1+O(\epsilon))+O(x^{1/2}).
\end{equation}
The desired result follows by dividing both sides of \eqref{eqn:blue_harvest_4} by $\log x$.
\end{proof}
		
	Let $\ell \geq 2$ be an integer, $x \geq 3$, and $\epsilon \in (0,1/4)$. Define
	\begin{equation}
		\begin{aligned}
			\widetilde{\psi}_C(x; f) = \widetilde{\psi}_C(x, L/F; f) & = \sum_{\kn} \Lambda_F(\kn) \mathbf{1}_C(\kn) f\Big( \frac{\log \N\kn}{\log x}\Big), 
		\end{aligned}
		\label{def:psi_C_smooth}
	\end{equation}
	where $f = f(\, \cdot \, ; x, \ell, \epsilon)$ is given by \cref{lem:WeightChoice}. To understand $\psi_C$, it suffices to study the smooth variant $\widetilde{\psi}_C$. 

	\begin{lem} \label{lem:Unsmooth}
		Let $\ell \geq 2$ be an integer, $x \geq 3$, and $\epsilon \in (0,1/4)$. Then 
		\[
		\psi_C(x) \leq \widetilde{\psi}_C(x; f)    + O(n_F x^{1/2})  \leq \psi_C(x e^{\epsilon}). 
		\]
		Moreover, $\widetilde{\psi}_C(x;f) = \psi_C(x) + O(n_F x^{1/2} + \epsilon x)$.
	\end{lem}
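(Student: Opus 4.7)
The plan is to compare $\widetilde\psi_C(x;f)$ to $\psi_C$ at the two endpoints $x$ and $xe^\epsilon$, using only the pointwise properties (i) and (ii) of $f$ from \cref{lem:WeightChoice}: $0\le f\le 1$, $f\equiv 1$ on $[1/2,1]$, and $\supp f\subseteq[\tfrac{1}{2}-\tfrac{\epsilon}{\log x},1+\tfrac{\epsilon}{\log x}]$. Translated to the norm variable, $f(\log\N\kn/\log x)=1$ for $x^{1/2}\le\N\kn\le x$ and vanishes for $\N\kn\notin[x^{1/2}e^{-\epsilon},xe^\epsilon]$. The elementary workhorse throughout is the Chebyshev-type bound $\sum_{\N\kn\le y}\Lambda_F(\kn)\ll n_F y$, which follows because each rational prime power $p^k\le y$ has at most $n_F$ preimages under the norm map among ideals with nonzero $\Lambda_F$.

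For the first inequality, positivity of $f$ together with $f\equiv 1$ on $[x^{1/2},x]$ yields
\[
\widetilde\psi_C(x;f)\ge\sum_{x^{1/2}\le\N\kn\le x}\Lambda_F(\kn)\mathbf{1}_C(\kn)=\psi_C(x)+O(n_F x^{1/2}),
\]
after discarding the tail $\sum_{\N\kn<x^{1/2}}\Lambda_F(\kn)\mathbf{1}_C(\kn)\ll n_F x^{1/2}$ via Chebyshev. For the second, $f\le 1$ and the support condition produce the stronger statement $\widetilde\psi_C(x;f)\le\sum_{\N\kn\le xe^\epsilon}\Lambda_F(\kn)\mathbf{1}_C(\kn)=\psi_C(xe^\epsilon)$, which implies $\widetilde\psi_C(x;f)+O(n_F x^{1/2})\le\psi_C(xe^\epsilon)$.

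For the moreover statement, I would write
\[
\widetilde\psi_C(x;f)-\psi_C(x)=\sum_\kn\Lambda_F(\kn)\mathbf{1}_C(\kn)\bigl(f(\log\N\kn/\log x)-\mathbf{1}_{\{\N\kn\le x\}}\bigr),
\]
and observe that the bracketed factor is supported on the two transition regions $\N\kn<x^{1/2}$ and $x<\N\kn\le xe^\epsilon$, with absolute value at most $1$ on each. The first region contributes $O(n_F x^{1/2})$ as before. The second is at most $\psi_F(xe^\epsilon)-\psi_F(x)$; the contribution from $\kp^j$ with $j\ge 2$ or residue degree $f_\kp\ge 2$ is $O(n_F x^{1/2})$ since such $\N\kp\le(xe^\epsilon)^{1/2}$, while the degree-one prime ideals in $(x,xe^\epsilon]$ reduce to a count of rational primes in that short interval.

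The main obstacle is this final short-interval bound $\theta(xe^\epsilon)-\theta(x)\ll\epsilon x$, uniform in all parameters; this is supplied by the Brun--Titchmarsh theorem (or a Chebyshev-in-short-intervals estimate) and avoids the extraneous factor of $n_F$ a crude Chebyshev bound would yield. Assembling all pieces gives the advertised error $O(n_F x^{1/2}+\epsilon x)$.
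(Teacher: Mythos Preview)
Your overall approach is exactly the paper's: deduce the sandwich
\[
\sum_{\sqrt{x}\le \N\kn\le x}\Lambda_F(\kn)\mathbf{1}_C(\kn)\ \le\ \widetilde\psi_C(x;f)\ \le\ \psi_C(xe^\epsilon)
\]
from the pointwise properties (i), (ii) of $f$, and then control the transition regions by the elementary short-interval bound $\sum_{z\le \N\kn\le y}\Lambda_F(\kn)\mathbf{1}_C(\kn)\le n_F\sum_{z\le n\le y}\Lambda_\Q(n)\ll n_F(y-z)$. The paper compresses all of this into two displayed lines.

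There is, however, a gap in your attempt to sharpen the ``moreover'' clause so that the $\epsilon x$ term carries no factor of $n_F$. Your passage from degree-one prime ideals $\kp$ of $F$ with $\N\kp\in(x,xe^\epsilon]$ to rational primes $p$ in that interval is not injective: a rational prime that splits completely in $F$ has $n_F$ degree-one primes above it, so
\[
\sum_{\substack{\kp:\ f_\kp=1\\ \N\kp\in(x,xe^\epsilon]}}\log\N\kp\ \le\ n_F\bigl(\theta(xe^\epsilon)-\theta(x)\bigr)\ \ll\ n_F\,\epsilon x,
\]
not $\epsilon x$. Brun--Titchmarsh bounds the rational-prime count but cannot remove this multiplicity. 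In fact the paper's own ``trivial estimate'' also yields $O(n_Fx^{1/2}+n_F\epsilon x)$; the bare $\epsilon x$ in the stated lemma appears to be a slip, immaterial downstream since only the sandwich inequalities are used in the proof of \cref{thm:NaturalPrimeSum}.
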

	\begin{proof} By \cref{lem:WeightChoice}(i,ii) and definitions \eqref{def:psi_C} and \eqref{def:psi_C_smooth}, we observe that
	\[
	\sum_{\sqrt{x} \leq \N\kn \leq x} \Lambda_F(\kn) \mathbf{1}_C(\kn)  \leq \widetilde{\psi}_C(x;f) \leq \psi_C(x e^{\epsilon}). 
	\]
	The lemma now follows from \eqref{def:psi_C} and the trivial estimate
	\[
	\sum_{z \leq \N\kn \leq y} \Lambda_F(\kn) \mathbf{1}_C(\kn) \leq n_F \sum_{z \leq n \leq y} \Lambda_{\Q}(n) \ll n_F (y-z) \qquad \text{ for $2 \leq z \leq y$.}
	\]	
	\end{proof}

\subsection{Dedekind zeta functions and Hecke $L$-functions} Now, assume $L/K$ is an abelian extension of number fields. The Dedekind zeta function $\zeta_L(s)$ satisfies
	\begin{equation}
	\zeta_L(s) = \prod_{\chi} L(s,\chi,L/K),
	\label{eqn:Factorization}
	\end{equation}
	where $\chi$ runs over the irreducible 1-dimensional Artin characters of $\mathrm{Gal}(L/K)$. By class field theory, each Artin $L$-function $L(s,\chi,L/K)$ is equal to a Hecke $L$-function $L(s,\chi,K)$, where (abusing notation) $\chi$ is a certain primitive Hecke character of $K$.  For simplicity, write $L(s,\chi)$ in place of $L(s,\chi,L/K)$ or $L(s,\chi,K)$.  Let the integral $\kf_{\chi} \subseteq \cO_K$ denote the conductor associated to $\chi$.  For each $\chi$, there exist nonnegative integers $a(\chi)$ and $b(\chi)$ satisfying $a(\chi)+b(\chi)=n_K$ such that if we define
	\[
	\gamma(s,\chi) = \Big[ \pi^{-\frac{s}{2}} \Gamma\Big(\frac{s}{2}\Big) \Big]^{a(\chi)} \Big[ \pi^{-\frac{s+1}{2}} \Gamma\Big(\frac{s+1}{2}\Big) \Big]^{b(\chi)}
	\]
	and
	\[
	\delta(\chi) = \begin{cases}
		1 & \text{if $\chi$ is trivial} \\
		0 & \text{otherwise,}
	\end{cases}
	\]
	then $\xi(s,\chi) := [s(1-s)]^{\delta(\chi)} (D_K\mathrm{N}\mathfrak{f}_{\chi})^{s/2} \gamma(s,\chi) L(s,\chi)$ satisfies the functional equation
	\begin{equation}
		\xi(s,\chi) = \varepsilon(\chi) \xi(1-s,\bar{\chi}),
		\label{eqn:Functional_Equation}
	\end{equation}
	where $\varepsilon(\chi)$ is a complex number with unit modulus.  Furthermore, $\xi(s,\chi)$ is an entire function of order 1 which does not vanish at $s=0$.  Note $L(s,\chi)$ has a simple pole at $s=1$ if and only if $\chi$ is trivial. The non-trivial zeros $\rho$ of $L(s,\chi)$ (which are the zeros of $\xi(s,\chi)$) satisfy $0 < \Re(\rho) < 1$, and the trivial zeros $\omega$ of $L(s,\chi)$ (which offset the poles of $\gamma(s,\chi)$) are at the non-negative integers, each with order at most $n_K$.

	The Dedekind zeta function $\zeta_L(s)$ possesses the same qualities (by considering the case $K = L$ and $\chi$ trivial). Namely, its completed $L$-function is
	\begin{equation}
	\xi_L(s) = [s(1-s)] D_L^{s/2} \Big[ \pi^{-s/2} \Gamma(\tfrac{s}{2}) \Big]^{a_L}\Big[(2\pi)^{-s} \Gamma(\tfrac{s+1}{2}) \Big]^{b_L} \zeta_L(s)
	\label{eqn:Completed_Dedekind}
	\end{equation}
	for certain integers $a_L,b_L \geq 0$ satsifying $a_L + b_L = [L:\Q]$. The trivial zeros $\omega$ of $\zeta_L(s)$ are at the non-negative integers with orders
	\begin{equation}
		\mathop{\ord}_{s=\omega} \zeta_L(s) = \begin{cases}
 					a_L & \omega = -2,-4,\dots \\
 					b_L & \omega = -1,-3,\dots \\
 					a_L-1 & \omega = 0. 
 				\end{cases}
		\label{eqn:TrivialZeros}
	\end{equation}
	Moreover, the conductor-discriminant formula states that
	\begin{equation}
		\log D_L = \sum_{\chi} \log(D_K\mathrm{N}\mathfrak{f}_{\chi}).
		\label{eqn:conductor_discriminant}
	\end{equation}
	From \eqref{eqn:MaxConductor_Abelian} with $\cQ = \cQ(L/K)$, it follows that
	\begin{equation}
	\log D_L \leq [L:K] \log(D_K \cQ).
	\label{eqn:disc_MaxCondBound}
	\end{equation}
	From this we deduce a somewhat crude bound for $\log D_L$ in terms of $D_K, \cQ,$ and $n_K$. 
	\begin{lem} \label{lem:disc_MaxCondBound} If $L/K$ is abelian, then $\log D_L \ll (D_K \cQ n_K^{n_K})^2$. 
	\end{lem}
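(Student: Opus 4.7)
My plan is to combine \eqref{eqn:disc_MaxCondBound}, which gives $\log D_L \leq [L:K]\log(D_K\cQ)$, with a class-field-theoretic bound on the degree $[L:K]$. Since $\log(D_K\cQ) \leq D_K\cQ$, establishing a polynomial bound of the form $[L:K] \ll D_K\cQ n_K^{2n_K}$ will suffice to conclude.

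By class field theory, the abelian extension $L/K$ is contained in the ray class field of conductor $\mathfrak{f}_{L/K}$, so $[L:K]$ divides the ray class number $h_{\mathfrak{f}_{L/K}}(K)$. I would then use the standard exact sequence
\[
\cO_K^{\times} \to (\cO_K/\mathfrak{f}_{L/K})^{\times} \times \{\pm 1\}^{r_1(K)} \to \Class_{\mathfrak{f}_{L/K}}(K) \to \Class(K) \to 1
\]
to obtain $h_\mathfrak{f}(K) \leq 2^{n_K} h_K \Phi(\mathfrak{f}) \leq 2^{n_K} h_K \N\mathfrak{f}$. Landau's bound $h_K \ll \sqrt{D_K}(\log D_K)^{n_K-1}$ together with $2^{n_K} \leq n_K^{n_K}$ (valid for $n_K \geq 2$) handle the first two factors, and the residual logarithmic factor can be absorbed using the elementary inequality $(\log x)^m \leq m^m x$ for $x \geq 1$.

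The main difficulty will be bounding $\N\mathfrak{f}_{L/K}$. Writing $\mathfrak{f}_{L/K}$ as the least common multiple of the $\mathfrak{f}_\chi$, at each prime $\mathfrak{p}$ in its support the exponent $v_\mathfrak{p}(\mathfrak{f}_{L/K})$ equals $v_\mathfrak{p}(\mathfrak{f}_\chi)$ for some $\chi$, so $\N\mathfrak{p}^{v_\mathfrak{p}(\mathfrak{f}_{L/K})} \leq \N\mathfrak{f}_\chi \leq \cQ$. Moreover, each such prime is ramified in $L/K$ and has norm at most $\cQ$, so the count of primes in the support is bounded in terms of $n_K$ and $\cQ$. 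Combining these facts should yield a bound on $\N\mathfrak{f}_{L/K}$ polynomial in $D_K$, $\cQ$, and $n_K^{n_K}$.

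Substituting back into \eqref{eqn:disc_MaxCondBound} then produces the claimed bound. The hard part will be ensuring that all accumulated factors---especially the $2^{n_K}$, the powers of $\log D_K$, and the estimate for $\N\mathfrak{f}_{L/K}$---stay within the claimed polynomial envelope. The generous form of the inequality (with $n_K^{n_K}$ and a squared power), which the authors themselves describe as ``somewhat crude,'' provides the slack needed to absorb any inefficiency in the intermediate steps.
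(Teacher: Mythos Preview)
Your strategy is the same as the paper's: reduce via \eqref{eqn:disc_MaxCondBound} to bounding $[L:K]$, then invoke class field theory. The paper simply quotes \cite[Lemma~1.16]{Weiss} for the bound $[L:K]\le D_K\cQ\,e^{O(n_K)}$ and is done; you instead try to rederive this through the ray class group exact sequence and Landau's bound on $h_K$, which is essentially what underlies Weiss's lemma.

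The gap is in your bound on $\N\mathfrak{f}_{L/K}$. You observe that each prime $\mathfrak{p}$ in the support contributes $\N\mathfrak{p}^{v_\mathfrak{p}(\mathfrak{f}_{L/K})}\le\cQ$, and that the number $r$ of such primes is ``bounded in terms of $n_K$ and $\cQ$''. But the only bound this argument furnishes on $r$ is the count of prime ideals of $K$ with norm at most $\cQ$, which can be of order $n_K\cQ/\log\cQ$. Feeding that into $\N\mathfrak{f}_{L/K}\le\cQ^{r}$ gives an estimate that is \emph{exponential} in $\cQ$, not polynomial, and the subsequent bound on $h_{\mathfrak{f}_{L/K}}(K)$---hence on $[L:K]$---collapses. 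Nor can you simply assert $\N\mathfrak{f}_{L/K}\le\cQ$: for instance, with $K=\Q$ and $L=\Q(\sqrt{85},\sqrt{221})$ one has $\mathfrak{f}_{L/\Q}=(5\cdot13\cdot17)$ while each nontrivial character has conductor equal to a product of only two of the three primes, so $\N\mathfrak{f}_{L/\Q}=1105>221=\cQ$. The inequality $[L:K]\ll D_K\cQ\,e^{O(n_K)}$ is correct, but obtaining it in terms of $\cQ$ rather than $\N\mathfrak{f}_{L/K}$ is precisely the content of Weiss's lemma; once you have that single input, the rest of your outline goes through.
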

	\begin{proof}
	By class field theory, $L$ is contained in some ray class field $L'$ of $K$ whose Artin conductor has norm at most $\cQ$. From \cite[Lemma 1.16]{Weiss}, it follows that $[L:K] \leq [L':K] \leq D_K \cQ e^{O(n_K)}$. The result now follows from \eqref{eqn:disc_MaxCondBound}. \end{proof}
	
	We also record a few standard estimates for Hecke $L$-functions. 

\begin{lem}[Lemma 5.4 of \cite{LO}] \label{lem:NumberOfZeros} 
If $t \in \R$ and $\chi$ is a Hecke character of $K$, then
 \[
 \#\{ \rho = \beta + i\gamma\colon L(\rho, \chi) = 0,~0 < \beta < 1,~|\gamma-t| \leq 1 \} \ll \log(D_K \N\kf_{\chi}) + n_K \log(|t|+3),
 \]	
 where the zeros $\rho$ are counted with multiplicity. 
\end{lem}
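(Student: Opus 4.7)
My plan is to follow the classical argument of Lagarias--Odlyzko, which proceeds via the Hadamard factorization of the completed $L$-function $\xi(s,\chi)$ together with Stirling-type estimates for the archimedean factor $\gamma(s,\chi)$. Since $\xi(s,\chi)$ is entire of order $1$ by the hypotheses stated just before \eqref{eqn:Functional_Equation}, one has a Hadamard product of the form $\xi(s,\chi) = e^{A(\chi)+B(\chi)s}\prod_\rho (1-s/\rho)e^{s/\rho}$, where $\rho$ ranges over the nontrivial zeros of $L(s,\chi)$. Taking logarithmic derivatives and using the definition of $\xi(s,\chi)$ gives the explicit formula
\[
\frac{L'}{L}(s,\chi) = B(\chi) - \tfrac{1}{2}\log(D_K \N\mathfrak{f}_\chi) - \frac{\gamma'}{\gamma}(s,\chi) - \delta(\chi)\Big(\frac{1}{s}+\frac{1}{s-1}\Big) + \sum_\rho\Big(\frac{1}{s-\rho}+\frac{1}{\rho}\Big).
\]

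Next I would evaluate this identity at $s=2+it$ and take real parts. The left-hand side is bounded by $O(n_K)$ via the absolutely convergent Euler product (each local factor contributes $O(1)$ and there are at most $n_K$ prime ideals of $K$ above each rational prime). Stirling's formula, applied to the $n_K$ gamma factors comprising $\gamma(s,\chi)$, yields $\Re \frac{\gamma'}{\gamma}(2+it,\chi) \ll n_K \log(|t|+3)$. The functional equation \eqref{eqn:Functional_Equation} implies the standard identity $\Re B(\chi) = -\sum_\rho \Re(1/\rho)$, so the $1/\rho$ terms in the sum over zeros cancel against $\Re B(\chi)$ after rearrangement. What remains is
\[
\sum_\rho \Re\frac{1}{2+it-\rho} \ll \log(D_K \N\mathfrak{f}_\chi) + n_K\log(|t|+3).
\]

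Every term on the left is nonnegative since $0<\Re(\rho)<1$, and each zero $\rho=\beta+i\gamma$ with $|\gamma-t|\leq 1$ satisfies
\[
\Re\frac{1}{2+it-\rho} = \frac{2-\beta}{(2-\beta)^2+(\gamma-t)^2} \geq \frac{1}{5}.
\]
Dropping all other zeros from the sum and dividing by this constant yields the claimed bound. The only step requiring care is the Stirling estimate for $\gamma(s,\chi)$: because there are $a(\chi)+b(\chi)=n_K$ gamma factors, one must ensure that all $n_K$ contributions are handled uniformly, but each individual $\Gamma'/\Gamma$ factor at $s=2+it$ is $O(\log(|t|+3))$ by the standard asymptotic $\Gamma'/\Gamma(s) = \log s + O(1/|s|)$ for $\Re(s)\geq 1$, so the uniform bound follows. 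No step presents a genuine obstacle; the argument is a textbook application of the explicit formula, and indeed the result is quoted directly from \cite[Lemma 5.4]{LO}.
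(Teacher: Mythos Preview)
Your proof is correct and follows the classical Lagarias--Odlyzko argument precisely. The paper does not supply its own proof of this lemma, instead citing it directly as \cite[Lemma 5.4]{LO}; your argument reproduces that proof faithfully, so there is nothing further to compare.
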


\begin{lem}[Lemma 5.6 of \cite{LO}] \label{lem:CriticalStripGrowth} Let $\chi$ be a Hecke character of $K$. Then
\[
-\frac{L'}{L}(s,\chi) \ll  \log(D_K \N\kf_{\chi}) + n_K\log(|\Im(s)|+3) 
\]
uniformly for $\Re(s) = -1/2$.
\end{lem}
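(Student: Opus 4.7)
The plan is to apply the functional equation \eqref{eqn:Functional_Equation} for $\xi(s,\chi)$ to transfer the problem from the line $\Re(s)=-1/2$, where $L'/L(s,\chi)$ admits no convergent series representation, to the reflected line $\Re(1-s)=3/2$, where the Euler product for $L(1-s,\bar\chi)$ converges absolutely. Taking logarithmic derivatives of $\xi(s,\chi) = \varepsilon(\chi) \xi(1-s,\bar\chi)$ with respect to $s$, the polar contributions $\delta(\chi)[1/s - 1/(1-s)]$ coming from the $[s(1-s)]^{\delta(\chi)}$ factor turn out to appear identically on both sides of the resulting identity and so cancel, leaving
\[
\frac{L'}{L}(s,\chi) = -\log(D_K \N\kf_\chi) - \frac{\gamma'}{\gamma}(s,\chi) - \frac{\gamma'}{\gamma}(1-s,\bar\chi) - \frac{L'}{L}(1-s,\bar\chi).
\]

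On $\Re(1-s)=3/2$, a termwise comparison of Euler products gives
\[
\Big|\frac{L'}{L}(1-s,\bar\chi)\Big| \leq -\frac{\zeta_K'}{\zeta_K}(3/2) \leq n_K \Big(-\frac{\zeta'}{\zeta}(3/2)\Big) \ll n_K,
\]
using the fact that each rational prime contributes at most $n_K$ prime ideals above it, each with norm at least $p$. The gamma-factor contributions expand as weighted sums over the $a(\chi)+b(\chi)=n_K$ values of $\Gamma'/\Gamma$ appearing in $\gamma'/\gamma(s,\chi)$ and $\gamma'/\gamma(1-s,\bar\chi)$, evaluated at arguments with real parts $-1/4$, $1/4$, $3/4$, and $5/4$. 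These are all safely away from the poles of $\Gamma'/\Gamma$, so Stirling's asymptotic $\Gamma'/\Gamma(z) = \log z + O(1/|z|)$ yields each value as $O(\log(|\Im s|+3))$, giving $\gamma'/\gamma(s,\chi),\; \gamma'/\gamma(1-s,\bar\chi) \ll n_K\log(|\Im s|+3)$.

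Substituting these three estimates into the displayed identity produces the claim. The argument is entirely classical and no step poses a serious obstacle; the only point requiring mild care is verifying that the digamma arguments remain bounded away from the negative real axis on $\Re(s)=-1/2$, which is immediate from the explicit real parts listed above.
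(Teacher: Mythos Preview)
Your argument is correct and is the standard route: logarithmically differentiate the functional equation, bound $L'/L(1-s,\bar\chi)$ by absolute convergence at $\Re=3/2$, and control the digamma terms via Stirling. The paper itself gives no proof of this lemma, merely citing Lagarias--Odlyzko \cite[Lemma 5.6]{LO}; your derivation is essentially what one finds there. One small remark: Stirling's asymptotic for $\Gamma'/\Gamma(z)$ is usually stated for $|\arg z|\le \pi-\delta$, so at $\Re(z)=-1/4$ (where $z=-1/4$ itself has argument $\pi$) it is cleanest to first apply the recurrence $\Gamma'/\Gamma(z)=\Gamma'/\Gamma(z+1)-1/z$ to shift to $\Re=3/4$ before invoking Stirling; this costs only an $O(1)$ since $|s|\ge 1/2$ on the line in question.
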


\section{The distribution of zeros}
\label{sec:Principles}
For Sections \ref{sec:Principles} and \ref{sec:MainTechnicalResult}, we will assume that the extension $L/K$ is abelian. For notational simplicity, define 
\begin{equation}
	Q = Q(L/K) := D_K \cQ n_K^{n_K},
	\label{def:Qmax}
\end{equation}
where $\cQ = \cQ(L/K)$ is given by \eqref{eqn:MaxConductor_Abelian}. 
Any sum $\sum_{\chi}$ or product $\prod_{\chi}$ is over the primitive Hecke characters $\chi$ associated with $L/K$ per the factorization in \eqref{eqn:Factorization}. Here we list three key results regarding the distribution of zeros of Hecke $L$-functions.

 \begin{thm}[Zero-free region] \label{thm:ZFR}
 	There exists $\Cl[abcon]{ZFR} > 0$ such that the Dedekind zeta function
 	\[
 	\zeta_L(s) = \prod_{\chi} L(s,\chi,L/K)
 	\]
 	has at most one zero in the region $\Re(s) > 1 - \Delta(|\Im(s)|+3)$, where the function $\Delta$ satisfies
 	\begin{equation}
		\Delta(t) \geq \frac{\Cr{ZFR}}{\log(Qt^{n_K})} \qquad \text{for $t \geq 3$.}
		\label{eqn:ZFR_constant}
	\end{equation} 	
	If such an {\it exceptional zero} $\beta_1$ exists then it is real, simple, and attached to the $L$-function of a real Hecke character $\chi_1$. 
 \end{thm}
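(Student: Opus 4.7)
The plan is to apply the classical Hadamard--de la Vall\'ee Poussin method to each Hecke $L$-function in the factorization \eqref{eqn:Factorization}, combined with Landau's trick to handle real zeros of real characters. Any non-trivial zero of $\zeta_L(s)$ in the critical strip is a zero of $L(s,\chi) = L(s,\chi,L/K)$ for some primitive Hecke character $\chi$ of $K$, so it suffices to bound the zeros of the family $\{L(s,\chi)\}_\chi$ uniformly in $\chi$, $D_K$, $n_K$, $\cQ$, and $|\Im(s)|$.

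First, for a fixed $\chi$ and $t \in \R$, I would apply the elementary inequality $3 + 4\cos\theta + \cos 2\theta \geq 0$ to the Euler products at primes of $K$ to obtain
\[
-3\,\Re \frac{\zeta_K'}{\zeta_K}(\sigma) - 4\,\Re \frac{L'}{L}(\sigma+it,\chi) - \Re \frac{L'}{L}(\sigma+2it,\chi^2) \geq 0 \qquad (\sigma > 1).
\]
The completed $L$-function $\xi(s,\chi)$ is entire of order one by \eqref{eqn:Functional_Equation}, so each logarithmic derivative admits a Hadamard expansion. Combined with \cref{lem:NumberOfZeros} (to count zeros within distance $O(1)$ of $1+it$ and $1+2it$) and \cref{lem:CriticalStripGrowth} (for the growth on $\Re(s) = -1/2$), these expansions reduce each $L'/L$ term to a sum $\sum_\rho (\sigma-\rho)^{-1}$ over nearby zeros plus a remainder of size $O(\log(D_K \N\kf_\chi) + n_K \log(|t|+3))$. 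Retaining a hypothetical zero $\rho = \beta + i\gamma$ near $1+it$ on the left and using the simple pole of $\zeta_K$ at $s=1$ forces $\beta$ to be bounded away from $1$ by a multiple of $1/\log(Q(|t|+3)^{n_K})$, once we use $\N\kf_\chi \leq \cQ$ from \eqref{eqn:MaxConductor_Abelian}. This yields the claimed region for every non-real zero and for every real zero coming from a non-quadratic $\chi$.

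The residual case of a real zero attached to a quadratic character $\chi$ (where the $\chi^2$-term degenerates into another pole at $s=1$) requires a separate treatment via Landau's trick. If $\chi \neq \chi'$ were two such characters each contributing a real zero in the claimed region, then $\zeta_K(s) L(s,\chi) L(s,\chi') L(s,\chi\chi')$ has non-negative Dirichlet coefficients (being a Dedekind zeta function of a biquadratic compositum over $K$); comparing its behavior near $s=1$ against the depression caused by two real zeros so close to $1$ forces a contradiction once $\Cr{ZFR}$ is chosen sufficiently small, giving uniqueness. An analogous application to $\zeta_K(s) L(s,\chi)^2$ rules out a multiple real zero, establishing simplicity of the exceptional zero $\beta_1$. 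Combined with the previous case, the exceptional zero is necessarily real and attached to a real $\chi$.

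The main obstacle is maintaining complete uniformity in $D_K$, $n_K$, $\cQ$, and $|t|$ throughout the logarithmic derivative estimates and the Landau comparison. The key bookkeeping inputs are the uniform bound $\N\kf_\chi \leq \cQ$ across the entire family, \cref{lem:NumberOfZeros} for the density of low-lying zeros, and \cref{lem:disc_MaxCondBound} to pass between global invariants of $L$ and those of $K$ when the Hadamard expansion of $\xi(s,\chi)$ is unfolded; the factor $n_K^{n_K}$ inside $Q$ in \eqref{def:Qmax} absorbs any residual degree-dependent slack that arises as $n_K$ grows.
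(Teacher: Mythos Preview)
The paper does not actually prove \cref{thm:ZFR}: its entire proof reads ``This is well-known; see, for example, \cite[Theorem 1.9]{Weiss}.'' So there is no argument in the paper to compare against; your sketch is supplying what the paper outsources to a reference.

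Your outline is the standard Hadamard--de la Vall\'ee Poussin plus Landau argument and is essentially correct. Two small points of overreach: \cref{lem:CriticalStripGrowth} (growth of $L'/L$ on $\Re(s)=-1/2$) is not used in this argument---the logarithmic-derivative estimates near $\Re(s)=1$ come directly from the Hadamard product for $\xi(s,\chi)$ together with Stirling bounds on the gamma factors, not from behavior deep in the strip. Likewise \cref{lem:disc_MaxCondBound} plays no role here: the zero-free region is proved character by character over $K$, and one never needs to pass to global invariants of $L$. The uniformity in $Q = D_K\cQ n_K^{n_K}$ follows already from $\N\kf_\chi \leq \cQ$ and the fact that the remainder in the Hadamard expansion is $O(\log(D_K\N\kf_\chi) + n_K\log(|t|+3))$; the $n_K^{n_K}$ in $Q$ is harmless slack rather than something that must be actively absorbed. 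With those two references dropped, your sketch matches the argument one finds in Weiss or in Lagarias--Odlyzko.
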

\begin{proof}
	This is well-known; see, for example, \cite[Theorem 1.9]{Weiss}.  
\end{proof}
 
We also refer to the exceptional zero $\beta_1$ as a Landau-Siegel zero. Now, for $0 \leq \sigma \leq 1, T \geq 1$ and any Hecke character $\chi$, define
	\begin{equation}
	N(\sigma, T, \chi) = \#\{ \rho = \beta + i\gamma  : L(\rho,\chi) = 0, \sigma < \beta < 1, |\gamma| \leq T \}, 
	\label{def:NumberOfZeros}
	\end{equation}
	where the zeros $\rho$ are counted with multiplicity.

\begin{thm}[Log-free zero density estimate] \label{thm:LFZDE}
	There exists an integer $\Cl[abcon]{ZDE} \geq 1$ such that 
	\begin{equation}
	\sum_{\chi} N(\sigma,T,\chi) \ll B_1 (QT^{n_K})^{\Cr{ZDE}(1-\sigma)}
	\label{eqn:ZDE}
	\end{equation}
	uniformly for any $0 < \sigma < 1$ and $T \geq 1$, where
	\begin{equation}
	B_1 = B_1(T) = \min\{1,(1-\beta_1) \log(QT^{n_K})\}.
	\label{def:B_1}
	\end{equation}
\end{thm}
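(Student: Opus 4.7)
The plan is to follow the established route for log-free zero density estimates in the Hecke setting over number fields, which combines three ingredients: a zero-detecting mollifier, a large sieve inequality for Hecke characters, and the Deuring-Heilbronn zero repulsion phenomenon. The starting point, for any nontrivial zero $\rho = \beta + i\gamma$ of a Hecke $L$-function $L(s,\chi)$ with $\beta > \sigma$ and $|\gamma| \leq T$, is to construct an inequality of the form
\[
1 \ll \Big| \sum_{y < \N\kn \leq y^A} \frac{\mu_K(\kn) \chi(\kn)}{(\N\kn)^{\rho}} \Phi\Big(\frac{\N\kn}{y}\Big) \Big|^2 + (\text{negligible error}),
\]
where $\Phi$ is a suitable smooth cutoff and the length parameter $y$ is taken to be a small power of $QT^{n_K}$. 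This is obtained by writing $L(s,\chi)^{-1}$ against a Mellin kernel, shifting the contour across $\rho$, and capturing the residue.

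Next, I would sum this inequality over all zeros counted by $N(\sigma,T,\chi)$ and over all primitive Hecke characters $\chi$ appearing in \eqref{eqn:Factorization}, then apply the large sieve / mean value inequality for Hecke characters, due to Fogels and Schaal and refined by Weiss, to the resulting Dirichlet polynomial:
\[
\sum_{\chi}\int_{-T}^{T}\Big|\sum_{M < \N\kn \leq 2M} a_{\kn}\chi(\kn) (\N\kn)^{-it}\Big|^2 \, dt \ll (M + QT^{n_K}) \sum_{\kn}|a_{\kn}|^2.
\]
The crucial point is that this mean value bound produces $(QT^{n_K})^{\Cr{ZDE}(1-\sigma)}$ with no extra logarithmic factor, which is precisely the \emph{log-free} feature. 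To incorporate the $B_1$ weight, I would feed in the Deuring-Heilbronn phenomenon for Hecke $L$-functions: if the exceptional zero $\beta_1$ of $L(s,\chi_1)$ exists, then every other nontrivial zero in the relevant region satisfies
\[
1 - \Re(\rho) \gg \frac{\log\bigl(((1-\beta_1)\log(QT^{n_K}))^{-1}\bigr)}{\log(QT^{n_K})}.
\]
Weighting the zero-detecting kernel so as to exploit this enlarged gap between $\rho$ and the line $\Re(s)=1$ produces the improvement by the factor $B_1 = \min\{1,(1-\beta_1)\log(QT^{n_K})\}$ in the final count.

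The main obstacle, and the reason this estimate lies genuinely deeper than Bombieri's original Dirichlet-character version, is the uniform control of the archimedean and ramified local factors across $K$. These contribute the $n_K^{n_K}$ and $D_K$ dependence through the gamma factor in \eqref{eqn:Functional_Equation} and the conductor-discriminant relation \eqref{eqn:conductor_discriminant}, and all of it must be absorbed into a single factor $Q$ without surrendering a stray $\log Q$ at any step. A complete and careful derivation of the precise form stated above was carried out by Weiss; accordingly the cleanest approach here is to invoke Weiss's theorem directly. When $\beta_1$ does not exist we interpret $B_1$ as $1$ and recover the classical log-free bound, and when $\beta_1$ is unusually close to $1$ the weight $B_1$ becomes small, yielding the improvement that is essential to the ``decoupling'' argument sketched in the introduction.
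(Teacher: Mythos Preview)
Your proposal ultimately lands on the same move the paper makes: cite Weiss's log-free density estimate (together with the authors' own earlier variant) rather than reprove it. The high-level sketch of mollifier plus large sieve plus Deuring--Heilbronn is a fair description of what lies behind those citations, though it is a narrative outline rather than a proof.

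There is, however, one genuine omission. Both Weiss's theorem and the zero-detecting machinery you describe establish \eqref{eqn:ZDE} only in a restricted range $1-\epsilon_0<\sigma<1$ for some small absolute $\epsilon_0>0$; the statement claims uniformity over the full interval $0<\sigma<1$. The paper handles this by enlarging the constant $\Cr{ZDE}$ and combining the trivial zero-count from \cref{lem:NumberOfZeros} with Stark's lower bound $1-\beta_1\gg Q^{-2}$ (\cref{thm:Stark}). The Stark input is not decorative: without a lower bound on $B_1$, the right-hand side $B_1(QT^{n_K})^{\Cr{ZDE}(1-\sigma)}$ could be too small to dominate the trivial count when $\sigma\le 1-\epsilon_0$. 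You should add this range-extension step explicitly.
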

\begin{proof} Let $\epsilon_0>0$ be a sufficiently small absolute and effective constant.  It follows from \cite[Theorem 3.2]{TZ1} or its variant \cite[Theorem 4.5]{TZ2} that if $1-\epsilon_0<\sigma<1$ and $T\geq 1$, then
	\[
	\sum_{\chi} N(\sigma,T,\chi) \ll (QT^{n_K})^{\Cr{ZDE}(1-\sigma)}
	\]
	regardless of whether $\beta_1$ exists.  Weiss \cite[Theorem 4.3]{Weiss} proved that if $\beta_1$ exists, then for $1-\epsilon_0<\sigma<1$ and $T\geq 1$,
	\[
	\sum_{\chi} N(\sigma,T,\chi) \ll (1-\beta_1) \log(QT^{n_K})(QT^{n_K})^{\Cr{ZDE}(1-\sigma)}.
	\]
	Thus for $T\geq 1$, \eqref{eqn:ZDE} holds with $B_1$ given by \eqref{def:B_1} in the range $1-\epsilon_0<\sigma<1$.   By enlarging $c_5$ if necessary and using Stark's bound from \cref{thm:Stark}, one can extend \eqref{eqn:ZDE} to the remaining interval $0 < \sigma < 1-\epsilon_0$ by employing the trivial bound that follows from \cref{lem:NumberOfZeros}. 
\end{proof}

\cref{thm:ZFR,thm:LFZDE} comprise the three principles used to prove Linnik's theorem on the least prime in an arithmetic progression: a zero-free region, a log-free zero density estimate, and a quantitative form of the zero repulsion phenomenon.  \cref{thm:LFZDE} combines the second and third principles by following the ideas of Bombieri \cite{Bombieri2}, and this is crucial to our arguments for certain choices of Galois extensions (see \cref{sec:appendix}).

 Additionally, one needs a more refined effective lower bound for the size of $1-\beta_1$ than what \eqref{eqn:Stark} provides.  Such a lower bound follows from \cite[Theorem 1', p. 148]{Stark-1974}.

	\begin{thm}[Stark's bound] \label{thm:Stark}
		Let $\beta_1 = 1-\lambda_1/\log Q$ be a real zero of a real Hecke character $\chi$ of the abelian extension $L/K$. Then $\lambda_1 \gg Q^{-2}$.
	\end{thm}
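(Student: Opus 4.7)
The plan is to reduce to Stark's classical bound~\cite[Theorem~1']{Stark-1974} by passing through a quadratic extension of $K$. Since $\chi_1$ is a real Hecke character of $\mathrm{Gal}(L/K)$, it has order at most two. If $\chi_1$ is trivial, then $L(s,\chi_1) = \zeta_K(s)$, and the bound follows by applying~\eqref{eqn:Stark} directly to $\zeta_K$ together with the crude estimates $n_K^{n_K}\leq Q$, $\log D_K \leq \log Q$, and $D_K^{1/n_K}\leq Q$. In the remaining case $\chi_1$ has order exactly two, and by class field theory it corresponds to a quadratic extension $M := L^{\ker\chi_1}/K$ with $\zeta_M(s) = \zeta_K(s)L(s,\chi_1)$; thus $\beta_1$ is a real zero of $\zeta_M$.

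I would then apply Stark's Theorem~1' to $\zeta_M$. This provides a dichotomy: either (i) $1-\beta_1 \gg 1/\log D_M$, or (ii) $M$ has a quadratic subfield $M_1$ over $\Q$ and $\beta_1$ is a zero of $\zeta_{M_1}$. By the conductor-discriminant formula for $M/K$, $D_M = D_K^2\,\N\kf_{\chi_1} \leq D_K^2\cQ$, so $\log D_M \ll \log Q$; case~(i) therefore yields $\lambda_1 \gg 1$. In case~(ii), since $[M:M_1]=n_K$, the discriminant tower formula gives $D_{M_1}^{n_K}\mid D_M$, hence $|d_{M_1}| \leq D_M^{1/n_K} \leq (D_K^2\cQ)^{1/n_K} \leq Q$. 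The effective Landau bound $1-\beta_1 \gg |d_{M_1}|^{-1/2}$ for the Siegel zero of a quadratic field then yields $1-\beta_1 \gg Q^{-1/2}$ and hence $\lambda_1 \gg Q^{-1/2}\log Q$. Either alternative is stronger than the claimed $\lambda_1 \gg Q^{-2}$.

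The main obstacle is Stark's dichotomy itself: the assertion that any Siegel-type zero of $\zeta_M$ must be inherited from a quadratic subfield of $M$ over $\Q$. This delicate reduction---the heart of Stark's original argument---is what makes a polynomial-in-$Q$ lower bound possible in the first place; once it is granted, the remaining manipulations reduce to routine bookkeeping with the conductor-discriminant formula in terms of $Q = D_K\cQ n_K^{n_K}$.
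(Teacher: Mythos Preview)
Your reduction is exactly the paper's: when $\chi$ is trivial apply Stark to $\zeta_K$, and when $\chi$ has order two pass to the quadratic extension $M=L^{\ker\chi}$ of $K$ and apply Stark to $\zeta_M=\zeta_K(s)L(s,\chi)$. The divergence is only in \emph{which} formulation of Stark's result you invoke, and there your argument has a gap.

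The dichotomy you quote---either $1-\beta_1\gg 1/\log D_M$, or $\beta_1$ is inherited from a quadratic subfield $M_1$ of $M$---is Stark's result for fields \emph{normal} over $\Q$ (his Theorem~1, and in the Heilbronn form Theorem~3). In the present setup nothing forces $M/\Q$ to be Galois: $L$ is only assumed Galois over $F$, and $K=L^{H}$ (hence $M$) is typically not normal over $\Q$. When $M/\Q$ is not normal, Stark's argument passes to the Galois closure $\hat M$, and the quadratic field $M_1$ is only guaranteed to sit inside $\hat M$, not inside $M$. Your tower step $D_{M_1}^{n_K}\mid D_M$ and the ensuing bound $|d_{M_1}|\le D_M^{1/n_K}$ therefore lack justification.

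The paper sidesteps this entirely by citing the ready-made inequality~\eqref{eqn:Stark}, which is precisely the consequence of Stark's Theorem~1$'$ that holds for \emph{arbitrary} number fields (the $n^{n}$ factor there absorbs the passage to the normal closure). Applying~\eqref{eqn:Stark} to $\zeta_M$ with $n_M=2n_K$ and $D_M=D_K^2\,\N\kf_{\chi}\le D_K^2\cQ$, and then bounding everything by a fixed power of $Q=D_K\cQ\,n_K^{n_K}$ via Minkowski, gives the claimed polynomial lower bound on $\lambda_1$ without any normality hypothesis. That is the intended one-line route; your argument becomes correct once you replace the dichotomy by~\eqref{eqn:Stark}.
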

	\begin{proof}
		This follows readily from \eqref{eqn:Stark} for $1-\beta$ when $\beta$ is the real zero of a Dedekind zeta function. If $\chi$ is trivial then consider the Dedekind zeta function $\zeta_K(s)$. If $\chi$ is quadratic then consider the Dedekind zeta function $\zeta_K(s) L(s,\chi,L/K)$ corresponding to the quadratic extension of $K$ defined by $\chi$. 
	\end{proof}

As we shall see, these three theorems yield a unified Chebotarev density theorem which produces an asymptotic count for primes even in the presence of a Landau-Siegel zero. 

\section{Weighted counts of primes in abelian extensions}
\label{sec:MainTechnicalResult}

\subsection{Main technical result} The proof of \cref{thm:main_theorem} rests on the analysis on the weighted prime counting function $\widetilde{\psi}_C(x; f) = \widetilde{\psi}_C(x, L/K; f)$ given by \eqref{def:psi_C_smooth}, where  $f$ is given by \cref{lem:WeightChoice} and $L/K$ is abelian. The goal of this section is to prove the following proposition. 

\begin{prop} \label{prop:WeightedPrimeSum}
Assume $L/K$ is abelian with Galois group $G$. Let $C \subseteq G$ be a conjugacy class of $G$. Let $f = f(\, \cdot \, ; x, \ell, \epsilon)$ be defined as in \cref{lem:WeightChoice} with 
\begin{equation}
\epsilon = 8\ell x^{-1/8\ell}, \qquad \ell = 4 \Cr{ZDE} n_K.
\label{eqn:ParameterChoices}
\end{equation}	
If $2 \leq Q  \leq x^{1/(36\Cr{ZDE})}$ and $\epsilon < 1/4$, then
\begin{equation}
\frac{|G|}{|C|} \widetilde{\psi}_C(x;f) = \Big( x - \chi_1(C) \frac{x^{\beta_1}}{\beta_1}  \Big) \Big( 1 + O\big( e^{-\frac{\Cr{ZFR}}{2} \frac{\log x}{\log Q}} + e^{-\sqrt{\Cr{ZFR} (\log x)/4n_K}} \big) \Big). 
\label{eqn:Asymptotic_Rewrite}
\end{equation}
\end{prop}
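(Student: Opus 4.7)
The approach is a Mellin-inverse contour shift that extracts the main term from the pole of $\zeta_L$ at $s=1$ and the contribution of the Landau-Siegel zero $\beta_1$, while bounding the remaining nontrivial zeros by combining \cref{thm:ZFR} and \cref{thm:LFZDE}. Since $L/K$ is abelian and $C=\{g_C\}$ is a singleton, character orthogonality gives $\mathbf{1}_C(\kn) = \frac{|C|}{|G|}\sum_\chi \bar\chi(C)\chi(\kn)$. Substituting this into \eqref{def:psi_C_smooth} and writing $f$ in terms of its Laplace transform via \cref{lem:WeightChoice}(iii) yields
\[
\widetilde{\psi}_C(x;f) = \frac{|C|}{|G|}\sum_\chi \bar\chi(C)\cdot\frac{\log x}{2\pi i}\int_{(2)} -\frac{L'}{L}(s,\chi)F(-s\log x)\, ds.
\]

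First I would shift each contour to $\Re(s)=-\tfrac12$. In the strip $-\tfrac12<\Re(s)<2$ the only relevant singularities are the simple pole at $s=1$ of $-L'/L(s,\chi_0)$ (for the trivial $\chi_0$), the nontrivial zeros $\rho$ of each $L(s,\chi)$, and a trivial zero at $s=0$ of order $\leq n_K$ per character. The $s=0$ contribution is a harmless $O(n_L\log x)$ by \eqref{eqn:TrivialZeros} and $F(0)<\tfrac34$, and the residual integral at $\Re(s)=-\tfrac12$ is $O(x^{-1/8})$ after combining \cref{lem:CriticalStripGrowth} with \cref{lem:WeightChoice}(vi) and the parameters \eqref{eqn:ParameterChoices}. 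The residues at $s=1$ and at $\beta_1$ (when it exists) combine, via \cref{lem:WeightChoice}(v) with $\sigma=\beta_1>\tfrac34$ (which holds because $Q\leq x^{1/(36\Cr{ZDE})}$ together with \cref{thm:Stark} keeps $\beta_1$ away from $\tfrac12$), to produce
\[
\big(F(-\log x)-\chi_1(C)F(-\beta_1\log x)\big)\log x = \Big(x-\chi_1(C)\frac{x^{\beta_1}}{\beta_1}\Big)(1+O(\epsilon))+O(x^{1/2}),
\]
which already supplies the claimed main term.

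The core of the proof is to bound
\[
\mathcal{E} = \frac{|C|}{|G|}\log x\sum_\chi \bar\chi(C)\sum_{\rho\neq\beta_1} F(-\rho\log x).
\]
I would split the nontrivial zeros dyadically by height $|\gamma|=|\Im\rho|$: for $|\gamma|\leq 1$ apply \cref{lem:WeightChoice}(iv) with $\alpha=0$, and for $|\gamma|\in[T,2T]$ with $T=2^k\geq 1$ take $\alpha=\ell$ so as to exploit the $(2\ell/(\epsilon|\gamma|))^\ell$ decay. On each range, \cref{thm:ZFR} forces $\beta\leq 1-\Cr{ZFR}/\log(Q(T+3)^{n_K})$, while \cref{thm:LFZDE}, inserted via a Stieltjes integration of $N(\sigma,T,\chi)$ against $x^\sigma$, reduces the sum over $\beta$ to an integral whose value is (modulo polynomial factors)
\[
\sum_{\chi}\sum_{|\gamma|\in[T,2T]} x^\beta \ll B_1\cdot x\cdot\exp\!\Big[-\frac{\Cr{ZFR}\log x}{\log(QT^{n_K})}\Big].
\]
For $T=O(1)$, this already yields the term $B_1 x\,e^{-\Cr{ZFR}\log x/\log Q}$; for larger $T$, combining this bound with the $T^{-\ell}$ decay from $F$ and optimizing the balance between the zero-free saving $\Cr{ZFR}\log x/\log(QT^{n_K})$ and $\ell\log T\asymp n_K\log T$ at $T\asymp\exp(\sqrt{\log x/n_K})$ produces the second term $B_1 x\,e^{-\sqrt{\Cr{ZFR}\log x/n_K}}$. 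Summing over $T$ and incorporating $\frac{|C|}{|G|}$ gives
\[
|\mathcal{E}| \ll \frac{|C|}{|G|}\, B_1\cdot x\cdot \Big(e^{-\Cr{ZFR}\log x/\log Q} + e^{-\sqrt{\Cr{ZFR}\log x/n_K}}\Big).
\]

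Finally I would convert this absolute bound into the multiplicative form of \eqref{eqn:Asymptotic_Rewrite}. If $\beta_1$ exists, the mean value theorem together with \cref{thm:Stark} gives $x-\chi_1(C)x^{\beta_1}/\beta_1 \gg (1-\beta_1)x\log x \gg B_1\cdot x$ by the definition \eqref{def:B_1} of $B_1$; if $\beta_1$ does not exist then $B_1=1$ and the main term is simply $x$. Either way, dividing $|\mathcal{E}|$ by the main term absorbs the $B_1$ factor and produces the claimed multiplicative error (after halving $\Cr{ZFR}$ to absorb the polynomial prefactors and $\frac{|C|}{|G|}\leq 1$). \textbf{The main obstacle is the dyadic optimization of the zero sum}, which must simultaneously exploit the zero-free region sliding with $|\gamma|$, the $T^{-\ell}$ decay of $F$, and the $B_1$ improvement in \cref{thm:LFZDE}; it is precisely because \cref{thm:LFZDE} folds zero repulsion directly into the density estimate that the final bound is of the correct shape regardless of whether $\beta_1$ exists or how close to $1$ it lies.
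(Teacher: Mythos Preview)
Your overall architecture matches the paper's: contour shift to $\Re(s)=-\tfrac12$, extraction of the pole at $s=1$ and the zero $\beta_1$ via \cref{lem:WeightChoice}(v), a dyadic decomposition of the remaining zeros handled by \cref{thm:ZFR} and \cref{thm:LFZDE}, and the final conversion to multiplicative form using $x-\chi_1(C)x^{\beta_1}/\beta_1\gg\nu_1 x$ (which is exactly \eqref{eqn:MainTerm_LB}). The last step, and your emphasis on the $B_1$ factor in \cref{thm:LFZDE}, are correct and on point.

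The gap is in the dyadic zero sum. You propose $\alpha=0$ for $|\gamma|\le 1$ and $\alpha=\ell$ for $|\gamma|\in[T,2T]$ with $T\ge 1$. But with the parameters \eqref{eqn:ParameterChoices} one has $2\ell/\epsilon = \tfrac14 x^{1/(8\ell)}$, so for $T\ge 1$ your bound on $|F(-\rho\log x)|\log x$ carries an extra factor $(2\ell/(\epsilon T))^{\ell}\asymp x^{1/8}T^{-\ell}$. For $T$ in the range $1\le T\le x^{1/(8\ell)}$ this factor \emph{exceeds} $1$; in particular at $T=1$ your dyadic block contributes roughly $B_1\,x^{9/8}\,e^{-\Cr{ZFR}\log x/\log Q}$, which is larger than the main term whenever $\log Q>8\Cr{ZFR}$. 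The binary choice $\alpha\in\{0,\ell\}$ (with threshold at $T=1$, or indeed at any fixed threshold) cannot bridge this intermediate range.

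The paper's remedy (see \cref{lem:SumOverZeros}) is to take the \emph{$\beta$-dependent} exponent $\alpha=\ell(1-\beta)$. This gives
\[
\frac{\log x}{x}|F(-\rho\log x)|\ll T_j^{-1/2}\cdot\big[(|\gamma|+3)^{-1/2}x^{-(1-\beta)/2}\big]\cdot(QT_j^{n_K})^{-2\Cr{ZDE}(1-\beta)},
\]
which has two virtues: the bracketed factor is uniformly $\le e^{-\eta(x)/2}$ by the zero-free region (this is where the one-variable optimization in \cref{lem:ErrorOptimization_Classical} enters), and the last factor exactly compensates the growth $(QT_j^{n_K})^{\Cr{ZDE}(1-\sigma)}$ from \cref{thm:LFZDE} in the Stieltjes integration, so the partial-summation integral collapses to $O(B_1(T_j))$ with no residual power of $x$ or $T_j$. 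With $\alpha=\ell(1-\beta)$ in hand, the rest of your outline goes through as you wrote it.
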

\begin{remark}
The constants $\Cr{ZFR}$ and $\Cr{ZDE}$ are defined in \cref{thm:ZFR,thm:LFZDE} respectively. 	 
\end{remark}

While $f$ and its parameters are chosen in \cref{prop:WeightedPrimeSum}, we will assume throughout this section that $\epsilon \in (0,1/4)$ and $\ell \geq 2$ are arbitrary, unless otherwise specified. The arguments leading to \cref{prop:WeightedPrimeSum} are divided into natural steps: shifting a contour, estimating the arising zeros with the log-free zero density estimate, and optimizing the error term with a classical zero-free region.

\subsection{Shifting the contour}

\begin{lem} \label{lem:ContourShift}
	 If $x \geq 3$, then $\dfrac{|G|}{|C|}\dfrac{\widetilde{\psi}_C(x;f)}{\log x}$ equals
	\[
		F(-\log x) -  \chi_1(C)F(-\beta_1 \log x) - \sum_{\chi} \bar{\chi}(C)\sumS_{ \rho_{\chi} } F(-\rho_{\chi} \log x)  + O\Big(  \frac{(2\ell/\epsilon)^{\ell} \log D_L}{x^{1/4} \log x} + \frac{n_L}{\log x} \Big),
	\]
	where the sum $\sum^{\star}$ is over all non-trivial zeros $\rho_{\chi} \neq \beta_1$ of $L(s,\chi)$, counted with multiplicity. Here the term $F(-\beta_1 \log x)$ may be omitted if the exceptional zero $\beta_1$ does not exist. 
\end{lem}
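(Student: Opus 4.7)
The plan is to express $\widetilde{\psi}_C(x;f)$ as a contour integral, shift the contour past the critical strip, and evaluate residues. Since $F$ is entire by \cref{lem:WeightChoice}(iii), standard Laplace inversion gives
\[
f\!\left(\frac{\log y}{\log x}\right) = \frac{\log x}{2\pi i}\int_{(c)} F(-s\log x)\, y^{-s}\, ds
\]
for any $c \in \R$. Substituting $y=\N\kn$, summing against $\Lambda_K(\kn)\mathbf{1}_C(\kn)$ with $c=2$ for absolute convergence, and invoking orthogonality of the characters of the abelian group $G=\mathrm{Gal}(L/K)$ rewrites the resulting Dirichlet series as $\frac{|C|}{|G|}\sum_\chi \bar\chi(C)(-L'/L(s,\chi))$. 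This yields the starting identity
\[
\frac{|G|}{|C|}\cdot\frac{\widetilde{\psi}_C(x;f)}{\log x} = \sum_\chi \bar\chi(C)\cdot\frac{1}{2\pi i}\int_{(2)} F(-s\log x)\Bigl(-\tfrac{L'}{L}(s,\chi)\Bigr)\,ds.
\]

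I would then shift each contour to $\Re(s)=-\tfrac12$. The integrand has three types of singularities in the strip $-\tfrac12<\Re(s)<2$: (a) the simple pole of $-\zeta_K'/\zeta_K$ at $s=1$ (present only for the trivial character $\chi_0$), whose residue yields the main term $\bar\chi_0(C)F(-\log x)=F(-\log x)$; (b) the non-trivial zeros $\rho$ of each $L(s,\chi)$ in the critical strip, each contributing $-m_\rho\bar\chi(C)F(-\rho\log x)$, which after separating the possible Landau-Siegel zero $\beta_1$ (real and attached to the real character $\chi_1$, so $\bar\chi_1=\chi_1$) becomes $-\chi_1(C)F(-\beta_1\log x)-\sum_\chi\bar\chi(C)\sumS_{\rho_\chi}F(-\rho_\chi\log x)$; and (c) the trivial zero of each $L(s,\chi)$ at $s=0$, whose total contribution $-F(0)\sum_\chi\bar\chi(C)m_\chi(0)$ has magnitude $O(n_L)$ since $F(0)=\tfrac12+O(\epsilon/\log x)=\Theta(1)$ and $\sum_\chi m_\chi(0)=a_L-1\leq n_L$ from the $\Gamma$-factor structure of $\xi_L(s)$ in \eqref{eqn:Completed_Dedekind}; this contribution is absorbed into the $O(n_L/\log x)$ term of the stated error.

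To control the shifted integral along $\Re(s)=-\tfrac12$, I combine \cref{lem:WeightChoice}(vi), which gives the decay $|F(-s\log x)|\ll\frac{x^{-1/4}(2\ell/\epsilon)^\ell}{\log x}(\tfrac14+t^2)^{-\ell/2}$, with \cref{lem:CriticalStripGrowth}, which bounds $|L'/L(s,\chi)|\ll\log(D_K\N\kf_\chi)+n_K\log(|t|+3)$ uniformly on the line. For $\ell\geq 2$ the $t$-integral converges absolutely, producing a per-character bound of $O\bigl(\frac{x^{-1/4}(2\ell/\epsilon)^\ell[\log(D_K\N\kf_\chi)+n_K]}{\log x}\bigr)$. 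Summing over $\chi$ with $|\bar\chi(C)|\leq 1$ and invoking the conductor-discriminant formula \eqref{eqn:conductor_discriminant} to collapse $\sum_\chi\log(D_K\N\kf_\chi)$ to $\log D_L$ yields the stated integral error $O\bigl(\frac{(2\ell/\epsilon)^\ell\log D_L}{x^{1/4}\log x}\bigr)$.

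The main obstacle is justifying the contour shift itself: I must close via horizontal segments $\Im(s)=\pm T$ and verify that their contributions vanish as $T\to\infty$. This hinges on the $|s|^{-\ell-1}$ decay of $F(-s\log x)$ provided by \cref{lem:WeightChoice}(iv) with $\alpha=\ell$, which dominates the $O(\log T)$ growth of $L'/L$; the hypothesis $\ell\geq 2$ is therefore essential for absolute convergence. A standard application of \cref{lem:NumberOfZeros} lets me choose a sequence $T\to\infty$ avoiding zeros of $L(s,\chi)$, and the same decay estimate guarantees the absolute convergence of $\sum_\chi\bar\chi(C)\sumS_{\rho_\chi} F(-\rho_\chi\log x)$.
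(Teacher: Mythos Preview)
Your proposal is correct and follows essentially the same approach as the paper: Mellin/Laplace inversion, a contour shift to $\Re(s)=-\tfrac12$, residue collection at $s=1$, $s=0$, and the non-trivial zeros, and estimation of the shifted integral via \cref{lem:WeightChoice}(vi), \cref{lem:CriticalStripGrowth}, and the conductor-discriminant formula. Your treatment is in fact slightly more explicit than the paper's in two places---you justify the vanishing of the horizontal segments and you sum the orders at $s=0$ directly to $a_L-1$ rather than bounding each by $n_K$---but the argument is otherwise identical.
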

\begin{proof} By \eqref{def:psi_C_integral}, \eqref{def:psi_C_smooth}, \cref{lem:WeightChoice} and a standard Mellin inversion calculation,
\begin{equation}
\frac{|G|}{|C|}\widetilde{\psi}_C(x;f) =   \sum_{\chi} \bar{\chi}(C) I_{\chi}, \quad \text{where } I_{\chi} = \frac{\log x}{2\pi i} \int_{2-i\infty}^{2+i\infty} -\frac{L'}{L}(s,\chi) F(-s \log x) ds.
	\label{eqn:MellinInversion}
	\end{equation}
	For each Hecke character $\chi$, shift the contour $I_{\chi}$ to the line $\Re(s) = -1/2$.  Note $F$ is entire by \cref{lem:WeightChoice}(iii), so we need only consider the zeros and poles of $L(s,\chi)$.  We pick up the simple pole at $s=1$ of $L(s,\chi)$ when $\chi$ is trivial and the trivial zero at $s=0$ of $L(s,\chi)$ of order at most $n_K$. Moreover, we also pick up all of the non-trivial zeros $\rho_\chi$ of $L(s,\chi)$. For the remaining contour along $\Re(s) = -1/2$, we apply \cref{lem:CriticalStripGrowth}, Minkowski's estimate $n_K \ll \log D_K$, and \cref{lem:WeightChoice}(vi) to deduce that
	\[
	-\frac{\log x}{2\pi i} \int_{-1/2-i\infty}^{-1/2+i\infty} \frac{L'}{L}(s,\chi,L/K) F(-s \log x)ds \ll \frac{(2\ell/\epsilon)^{\ell} \log(D_K \N\kf_{\chi})}{x^{1/4}}.
	\]
	Combining all of these observations yields
	\begin{equation}
		(\log x)^{-1} I_{\chi}  = \delta(\chi) F(-\log x) - \sideset{}{^\star}\sum_{\rho_\chi} F(-\rho_{\chi} \log x) + O\Big( F(0) n_K +  \frac{(2\ell/\epsilon)^{\ell} \log(D_K \N\kf_{\chi})}{x^{1/4} \log x} \Big).
		\label{eqn:Contour_If}
	\end{equation} 
	Here, $\rho_{\chi}$ runs over all non-trivial zeros of $L(s,\chi)$, including $\beta_1$ if it exists. Substituting \eqref{eqn:Contour_If} into \eqref{eqn:MellinInversion} and dividing through by $\log x$, we obtain the desired result but with an error term of 
	\[
	O\Big( \frac{|F(0)| n_K}{\log x} \sum_{\chi} |\bar{\chi}(C)|   + \frac{(2\ell/\epsilon)^{\ell}  }{x^{1/4} \log x} \sum_{\chi}  |\bar{\chi}(C)|  \log(D_K \N\kf_{\chi})    \Big). 
	\]
	As $L/K$ is abelian, the characters $\chi$ are 1-dimensional so $|\bar{\chi}(C)| = 1$. Thus, applying the conductor-discriminant formula \eqref{eqn:conductor_discriminant}, the observation $n_K \sum_{\chi} 1 = [L:K] n_K = n_L$, and \cref{lem:WeightChoice}(iv), we obtain the desired error term. 
\end{proof}

\subsection{Estimating the zeros}

Now we estimate the sum over non-trivial zeros $\rho$ in \cref{lem:ContourShift}, beginning with those $\rho$ of small modulus. 
\begin{lem} \label{lem:SumOverSmallZeros} If $x \geq 3$, then $\displaystyle\sum_{\chi} \sum_{\substack{\rho_{\chi} \\ |\rho_{\chi}| \leq 1/4}} |F(-\rho_{\chi} \log x)| \ll x^{1/4} \log D_L$.
\end{lem}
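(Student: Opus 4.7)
The plan is to combine a pointwise bound on $F(-\rho_\chi \log x)$ with the standard count of zeros in a bounded region, then sum across characters using the conductor-discriminant formula.

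First, I would invoke \cref{lem:WeightChoice}(iv) with $\alpha=0$ to get the clean bound $|F(-s\log x)|\leq e^{\sigma\epsilon}x^{\sigma}$ for $\Re(s)=\sigma>0$. For any non-trivial zero $\rho_\chi=\beta+i\gamma$ satisfying $|\rho_\chi|\leq 1/4$, we have $0<\beta\leq 1/4$, hence
\[
|F(-\rho_\chi \log x)| \leq e^{\beta\epsilon}x^{\beta} \leq e^{\epsilon/4}x^{1/4} \ll x^{1/4},
\]
using $\epsilon<1/4$. Thus the pointwise estimate does all the analytic work, and the remaining task is purely to count zeros.

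Next, since $|\rho_\chi|\leq 1/4$ forces $|\gamma|\leq 1/4\leq 1$, \cref{lem:NumberOfZeros} applied at $t=0$ yields
\[
\#\{\rho_\chi : |\rho_\chi|\leq 1/4\} \ll \log(D_K \N\kf_\chi) + n_K.
\]
Summing over the primitive Hecke characters $\chi$ associated with $L/K$, I would apply the conductor-discriminant formula \eqref{eqn:conductor_discriminant} to get $\sum_\chi \log(D_K\N\kf_\chi)=\log D_L$, and use that $n_K \sum_\chi 1 = n_L$ to combine all terms as $\ll \log D_L + n_L$. Finally, invoking Minkowski's bound $n_L \ll \log D_L$ (valid for $L\neq\Q$, and trivial otherwise), we collapse this to $\ll \log D_L$.

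Putting the pieces together gives
\[
\sum_{\chi}\sum_{\substack{\rho_\chi \\ |\rho_\chi|\leq 1/4}} |F(-\rho_\chi\log x)| \ll x^{1/4} \sum_{\chi}\bigl(\log(D_K\N\kf_\chi)+n_K\bigr) \ll x^{1/4}\log D_L,
\]
as claimed. I do not anticipate any real obstacle here; the lemma is an easy warm-up that isolates the contribution of zeros close to the origin, where the rapid polynomial decay of $F$ from \cref{lem:WeightChoice}(iv) is not needed and the trivial exponential bound suffices.
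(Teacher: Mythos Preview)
Your proof is correct and follows essentially the same route as the paper: bound $|F(-\rho_\chi\log x)|\ll x^{1/4}$ via \cref{lem:WeightChoice}(iv), count zeros via \cref{lem:NumberOfZeros}, and sum over $\chi$ using the conductor--discriminant formula. The only cosmetic difference is that the paper absorbs the $n_K$ term via $n_K\ll\log D_K$ before summing, whereas you sum first to get $n_L$ and then invoke $n_L\ll\log D_L$; both work.
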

\begin{proof}
	From \cref{lem:WeightChoice}(iv) and \cref{lem:NumberOfZeros}, 
	\begin{equation*}
	\sum_{\chi} \sideset{}{^\star}\sum_{\substack{\rho_{\chi} \\ |\rho_{\chi}| \leq 1/4}} |F(-\rho_{\chi} \log x)| 	
		 \ll \sum_{\chi} \sideset{}{^\star}\sum_{\substack{\rho_{\chi} \\ |\rho_{\chi}| \leq 1/4}} x^{1/4} 
		 \ll x^{1/4} \sum_{\chi} (\log (D_K \N\kf_{\chi}) + n_K ).
	\end{equation*}
	The result now follows from Minkowski's estimate $n_K \ll \log D_K$ and \eqref{eqn:conductor_discriminant}. 
\end{proof}

Next, we use the log-free zero density estimate to analyze the remaining contribution. 

\begin{lem} \label{lem:SumOverZeros}
Keep the assumptions and notation of \cref{lem:ContourShift}. Select $\epsilon$ and $\ell$ as in \eqref{eqn:ParameterChoices} and assume $\epsilon < 1/4$. For $2 \leq Q \leq x^{1/(8\Cr{ZDE})}$,
 \begin{equation}
\log x \sum_{\chi}   \sumS_{\substack{\rho_{\chi} \\ |\rho_{\chi}| \geq 1/4}} |F(-\rho_{\chi} \log x)|  \ll \nu_1  x e^{-\eta(x)/2},
 \label{eqn:SumOverZeros}
 \end{equation}
 where
  \begin{equation}
 	\nu_1 = \begin{cases}
 				(1-\beta_1)\log Q & \text{if $\beta_1$ exists}, \\
 				1 & \text{otherwise,}
 			\end{cases}
 	\label{def:nu}
 \end{equation}  
 and $\eta$ is given by
	 \begin{equation}
	 \begin{aligned}
 	\eta(x) & = \inf_{t \geq 3}\big[ \Delta(t) \log x +  \log t  \big]. \\
 	\end{aligned}
 	\label{def:eta_function}
 	\end{equation}
\end{lem}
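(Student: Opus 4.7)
The plan is to combine the pointwise bound on $|F(-\rho\log x)|$ from \cref{lem:WeightChoice}(iv) with the log-free zero density estimate (\cref{thm:LFZDE}) and the zero-free region (\cref{thm:ZFR}), via a dyadic decomposition over $|\Im\rho|$. With the parameter choice \eqref{eqn:ParameterChoices} one has $2\ell/\epsilon = x^{1/(8\ell)}/4$, so taking $\alpha\in\{0,\ell\}$ in \cref{lem:WeightChoice}(iv) yields
\[
\log x \cdot |F(-\rho\log x)| \ll \frac{x^\beta}{|\rho|}\min\Bigl\{1,\,\Bigl(\tfrac{2\ell}{\epsilon|\rho|}\Bigr)^{\ell}\Bigr\},
\]
and the decay factor becomes effective once $|\rho|$ exceeds the threshold $T_* := x^{1/(8\ell)}$. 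I accordingly split the zero sum into a small piece ($1/4 \leq |\rho| \leq T_*$) and a large piece ($|\rho| > T_*$).

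For the small piece, I dyadically decompose $|\gamma|\in(T/2,T]$ with $T = 2^k \leq T_*$. Partial summation against $N(\sigma,T)$, restricted by \cref{thm:ZFR} to $\beta \leq 1-\Delta(T+3)$ and estimated by \cref{thm:LFZDE}, produces the auxiliary bound
\[
\sum_\chi\,\sideset{}{^\star}\sum_{|\gamma|\leq T} x^\beta \ll B_1(T)\, x^{1-\Delta(T+3)}.
\]
The hypothesis $Q \leq x^{1/(36\Cr{ZDE})}$ enters here to ensure $\log x - \Cr{ZDE}\log(QT^{n_K}) \gg \log x$, so that the resulting integral concentrates at $\sigma = 1-\Delta(T+3)$. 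The key analytic step is the factorization
\[
\frac{x^{-\Delta(T+3)}}{T} \;=\; \underbrace{\frac{x^{-\Delta(T+3)/2}}{(T+3)^{1/2}}}_{\leq\, e^{-\eta(x)/2}} \;\cdot\; \underbrace{\frac{(T+3)^{1/2}\,x^{-\Delta(T+3)/2}}{T}}_{\leq\, 2\,T^{-1/2}},
\]
in which the first factor is controlled by $e^{-\eta(x)/2}$ because $\eta(x) \leq \Delta(T+3)\log x + \log(T+3)$ by definition. Combined with the bound $B_1(T)/\nu_1 \leq \log(QT^{n_K})/\log Q$ and the fact that $n_K/\log Q = O(1)$ (which follows from $Q \geq n_K^{n_K}$), the dyadic sum of the second factors converges geometrically, giving a small-$|\rho|$ contribution of size $O(\nu_1\, x\, e^{-\eta(x)/2})$.

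For the large piece, the decay factor $(T_*/(4|\rho|))^\ell$ makes the dyadic sum rapidly convergent; using the crude bound $\sum_\rho x^\beta \ll x$ in each dyadic range gives a total contribution of order $\nu_1\, x^{1-1/(8\ell)}/4^\ell$. The AM-GM inequality gives $\log(4^\ell T_*) = \ell\log 4 + \log x/(8\ell) \geq \sqrt{\log x\cdot\log 4/2}$, while testing $t \asymp \sqrt{\Cr{ZFR}\,n_K\log x}$ in the definition of $\eta$ (and using the hypotheses on $Q$ and $x$) yields $\eta(x) \leq 2\sqrt{\Cr{ZFR}\log x/n_K}+O(1)$. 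These two inequalities together force $x^{-1/(8\ell)}/4^\ell \ll e^{-\eta(x)/2}$ for suitable absolute constants, absorbing the large-$|\rho|$ piece into the claimed bound.

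The main obstacle is spotting the factorization that produces $e^{-\eta(x)/2}$: once one recognizes that $\eta$ precisely encodes the trade-off between the zero-free region saving $x^{-\Delta(T+3)}$ and the loss $1/|\rho| \asymp 1/T$ from the bound on $|F|$, the square-root split is natural—one half-power serves as a uniform pointwise cap bounded by $e^{-\eta(x)/2}$, while the other half-power pairs with $T^{-1/2}$ to furnish the summable dyadic tail. Verifying that $B_1(T)/\nu_1$ stays bounded after summation relies on $Q \geq n_K^{n_K}$ to control $n_K/\log Q$, and the large-$|\rho|$ absorption relies on the particular calibration of $\ell$ and $\epsilon$ in \eqref{eqn:ParameterChoices}.
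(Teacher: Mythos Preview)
Your proposal differs from the paper's argument at the crucial first step: you invoke \cref{lem:WeightChoice}(iv) only with $\alpha\in\{0,\ell\}$, whereas the paper takes the \emph{variable} exponent $\alpha=\ell(1-\beta)$, depending on each zero. That single choice is what makes the paper's proof go through uniformly. With the parameters \eqref{eqn:ParameterChoices} and the hypothesis $Q\le x^{1/(8\Cr{ZDE})}$ (not $x^{1/(36\Cr{ZDE})}$, incidentally), the paper obtains
\[
\frac{\log x}{x}\,|F(-\rho\log x)|\;\ll\; T_j^{-1/2}\cdot\underbrace{(|\gamma|+3)^{-1/2}x^{-(1-\beta)/2}}_{\le\,e^{-\eta(x)/2}}\cdot (QT_j^{n_K})^{-2\Cr{ZDE}(1-\beta)}
\]
for \emph{every} dyadic block $T_j$. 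The middle factor is controlled pointwise by the zero-free region evaluated at $|\gamma|+3$ (no monotonicity of $\Delta$ is needed), and the last factor is tailored to cancel the growth in \cref{thm:LFZDE}, so partial summation gives $\sum_{\rho}(QT_j^{n_K})^{-2\Cr{ZDE}(1-\beta)}\ll B_1(T_j)$ for all $T_j$, with a summable $T_j^{-1/2}$ left over.

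Your split loses this matching. For the small piece, your auxiliary bound $\sum_{|\gamma|\le T}x^{\beta}\ll B_1(T)\,x^{1-\Delta(T+3)}$ is not quite what partial summation yields: integrating up to $\sigma=1-\Delta$ leaves a residual factor $(QT^{n_K})^{\Cr{ZDE}\Delta}$, and restricting to $\beta\le 1-\Delta(T+3)$ tacitly assumes $\Delta$ is nonincreasing. Both are patchable (the residual factor is absorbed by the $x^{-\Delta/2}$ you discard in the second half of your factorization), but the real problem is the large piece. Your absorption step $x^{-1/(8\ell)}/4^{\ell}\ll e^{-\eta(x)/2}$ is equivalent to an \emph{upper} bound on $\eta$, and you justify it by plugging a specific $t$ into the infimum defining $\eta$ to get $\eta(x)\le 2\sqrt{\Cr{ZFR}\log x/n_K}+O(1)$. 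But \cref{thm:ZFR} only gives a \emph{lower} bound on $\Delta$, so no upper bound on $\eta$ is available: if the actual zero-free region is wide (e.g.\ $\Delta\equiv\tfrac12$), then $e^{-\eta/2}\asymp x^{-1/4}$ while $x^{-1/(8\ell)}/4^{\ell}$ is much larger for fixed $n_K$ and large $x$. The lemma still holds in that scenario because the zero sum is nearly empty, but your argument does not see this. Similarly, your ``crude bound $\sum_{\rho}x^{\beta}\ll x$'' via partial summation requires $x>(QT^{n_K})^{\Cr{ZDE}}$, which fails once $T$ exceeds roughly $x^{1/(\Cr{ZDE}n_K)}$, so the large-$T$ tail is not under control without the adaptive exponent.
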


\begin{proof} We dyadically estimate the zeros. For $j \geq 1$, set $T_0 = 0$ and $T_j = 2^{j-1}$ for $j \geq 1$. Consider the sum
	\begin{equation}
	Z_j := \frac{\log x}{x} \sum_{\chi} \sum_{\substack{ \rho_{\chi} = \beta_{\chi} + i\gamma_{\chi} \\ T_{j-1} \leq |\gamma_{\chi}| \leq T_j  \\ |\rho_{\chi}| \geq 1/4 }} |F(-\rho_{\chi} \log x)|
	\label{eqn:Dyadic_ContributionZeros}
	\end{equation}	
	for $j \geq 1$.  First, we estimate the contribution of each zero $\rho = \rho_{\chi}$ appearing in $Z_j$. Let $\rho = \beta+i\gamma$ satisfy $T_{j-1} \leq |\gamma| \leq T_j$ and $|\rho| \geq 1/4$, so $|\rho| \geq \max\{T_{j-1},\tfrac{1}{4} \} \geq T_j/4$ and $|\rho| \gg |\gamma|+3$.  Thus, \cref{lem:WeightChoice}(iv) with $\alpha = \ell(1-\beta)$ and our choice of $\epsilon$ imply that
	\begin{align*}
	\frac{\log x}{x} |F(-\rho \log x)|  \ll \frac{x^{\beta-1}}{|\rho|}   \Big( \frac{2\ell}{\epsilon |\rho|} \Big)^{\ell(1-\beta)} \ll T_j^{-1/2} (|\gamma|+3)^{-1/2} \cdot  x^{-(1-\beta)/2}  \cdot \big( x^{3/8} T_j^{\ell} \big)^{-(1-\beta)}.
 	\end{align*}
	Since $Q \leq x^{1/(8\Cr{ZDE})}$ and $\ell = 4\Cr{ZDE} n_K$, it follows that
	\begin{equation}
	\frac{\log x}{x} |F(-\rho \log x)| \ll T_j^{-1/2} \cdot (|\gamma|+3)^{-1/2} x^{-(1-\beta)/2} (QT_j^{n_K})^{-2\Cr{ZDE}(1-\beta)}. 
	\label{eqn:ContributionZero}
	\end{equation}
	From \cref{thm:ZFR} and \eqref{def:eta_function}, we deduce
	\[
	(|\gamma|+3)^{-1/2} x^{-(1-\beta)/2} \leq (|\gamma|+3)^{-1/2} x^{-\Delta(|\gamma|+3)/2} \leq e^{-\eta(x)/2}. 
	\]
	Note the righthand side is uniform over all non-trivial zeros $\rho$ appearing in \eqref{eqn:SumOverZeros}. Combining \eqref{eqn:ContributionZero} and the above inequality with \eqref{eqn:Dyadic_ContributionZeros}, we deduce that 
	\[
	Z_j \ll e^{-\eta(x)/2} T_j^{-1/2}  \sum_{\chi} \sum_{\substack{ \rho_{\chi} = \beta_{\chi} + i\gamma_{\chi} \\ T_{j-1} \leq |\gamma_{\chi}| \leq T_j }} (QT_j^{n_K})^{-2\Cr{ZDE}(1-\beta)}.
	\]
	Defining $N(\sigma,T) = \sum_{\chi} N(\sigma,T,\chi)$, we use partial summation and \cref{thm:LFZDE} to see that
	\begin{equation*}
	\begin{aligned}
		e^{\eta(x)/2}T_j^{1/2} Z_j & \ll  \int_0^1 (QT_j^{n_K})^{-2\Cr{ZDE}\alpha} dN(1-\alpha,T_j) \\		
			& \ll  \Big[ (QT_j^{n_K})^{-2\Cr{ZDE}} N(0,T_j) + \log(QT_j^{n_K}) \int_0^1 (QT_j^{n_K})^{-2\Cr{ZDE}\alpha} N(1-\alpha,T_j) d\alpha \Big] \\		
			& \ll B_1(T_j)  \Big[ (QT_j^{n_K})^{-\Cr{ZDE}}  + \log(QT_j^{n_K}) \int_0^1 (QT_j^{n_K})^{-\Cr{ZDE}\alpha}   d\alpha \Big]  \ll B_1(T_j).  
	\end{aligned}
	\end{equation*}
	If a Landau-Siegel zero does not exist then $B_1(T_j) = 1 = \nu_1$. Otherwise, if a Landau-Siegel zero exists then one can verify by \eqref{def:B_1} and a direct calculation that
	\[
	B_1(T_j)T_j^{-1/4} \leq (1-\beta_1) \cdot \sup_{t \geq 1}\big[ \log(Qt^{n_K})t^{-1/4}\big] \ll (1-\beta_1) \log Q = \nu_1.
	\]
	The supremum occurs at $t \ll 1$ since $n_K \leq \log Q$. Therefore, 
	\[
	\sum_{j \geq 1} Z_j \ll e^{-\eta(x)/2} \sum_{j \geq 1} \frac{B_1(T_j)}{T_j^{1/4}} \cdot \frac{1}{T_j^{1/4}} \ll \nu_1 e^{-\eta(x)/2} \sum_{j \geq 1} 2^{-j/4} \ll \nu_1 e^{-\eta(x)/2},
	\]
	which yields the lemma by definition \eqref{eqn:Dyadic_ContributionZeros}. 
\end{proof}

\subsection{Error term with a classical zero-free region}
The quality of the error term in \cref{lem:SumOverZeros}, and hence in \cref{prop:WeightedPrimeSum}, is reduced to computing $\eta(x)$.  This is a single-variable optimization problem.

\begin{lem} \label{lem:ErrorOptimization_Classical}
Let $\eta$ be defined by \eqref{def:eta_function}. If $x \geq 2$ then $e^{-\eta	(x)} \leq e^{- \Cr{ZFR} \frac{\log x}{\log Q} } + e^{- \sqrt{\Cr{ZFR} (\log x)/n_K}}$.
\end{lem}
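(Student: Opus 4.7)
My approach is to use the lower bound $\Delta(t) \geq \Cr{ZFR}/\log(Q t^{n_K})$ from \cref{thm:ZFR}, which immediately reduces the lemma to estimating a one-variable infimum:
\[
\eta(x) \,\geq\, \inf_{t \geq 3}\Big[ \frac{\Cr{ZFR}\log x}{\log Q + n_K\log t} + \log t\Big].
\]
The two terms on the right-hand side of the lemma correspond to two regimes for the near-minimizer $t$: one in which $\log Q$ dominates $n_K\log t$ (the classical zero-free-region regime, producing the $e^{-\Cr{ZFR}\log x/\log Q}$ contribution), and one in which $n_K\log t$ dominates (the ``field-uniformity'' regime, producing the $e^{-\sqrt{\Cr{ZFR}\log x/n_K}}$ contribution).

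Concretely, for each fixed $t \geq 3$ I would split on whether $n_K\log t \leq \log Q$ or $n_K\log t > \log Q$. In the first subcase, $\log Q + n_K\log t \leq 2\log Q$, and the bracketed quantity is at least $\Cr{ZFR}\log x/(2\log Q)$. In the second subcase, $\log Q + n_K\log t \leq 2 n_K\log t$, and AM-GM yields
\[
\frac{\Cr{ZFR}\log x}{2 n_K\log t} + \log t \,\geq\, 2\sqrt{\frac{\Cr{ZFR}\log x}{2n_K}} \,=\, \sqrt{\frac{2\Cr{ZFR}\log x}{n_K}}.
\]
Each of these bounds is uniform in $t$, so taking the infimum over $t \geq 3$ gives
\[
\eta(x) \,\geq\, \min\Big(\frac{\Cr{ZFR}\log x}{2\log Q},\, \sqrt{\frac{2\Cr{ZFR}\log x}{n_K}}\Big).
\]
The elementary inequality $e^{-\min(u,v)} = \max(e^{-u},e^{-v}) \leq e^{-u}+e^{-v}$ then produces the stated bound, once the constants $1/2$ and $\sqrt{2}$ appearing in the exponents are absorbed by replacing $\Cr{ZFR}$ with a smaller absolute constant. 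This adjustment is harmless: any smaller positive value still satisfies the zero-free region of \cref{thm:ZFR}.

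I do not anticipate a substantive obstacle, as the whole argument is an elementary AM-GM/calculus exercise. The only bookkeeping is to verify that the estimates are uniform in $x \geq 2$, $Q \geq 2$, and $n_K \geq 1$, and to note that the boundary constraint $t \geq 3$ plays no essential role, since each case-bound above holds for every admissible $t$ separately.
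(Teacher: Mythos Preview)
Your proposal is correct and follows essentially the same approach as the paper: both invoke the zero-free region bound to reduce $\eta(x)$ to the single-variable infimum $\inf_{u\geq 0}\big[\tfrac{\Cr{ZFR}\log x}{\log Q + n_K u}+u\big]$ and then isolate the two regimes. The only cosmetic difference is that the paper computes the minimizer exactly via calculus and splits on $x$ (obtaining the stated constants on the nose), whereas you split on $t$ and use AM--GM, losing a harmless factor of $2$ that you then absorb into $\Cr{ZFR}$.
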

\begin{proof} It follows from \cref{thm:ZFR}, \eqref{def:eta_function}, and a change of variables $t = e^u$ that
	\[
	\eta(x) \geq \inf_{u \geq 0} \phi_x(u) \qquad \text{where } \phi_x(u) = \frac{\Cr{ZFR}  \log x}{\log Q + n_K u } + u. 
	\]
	Note that $\phi_x(u) \to \infty$ as $u \to \infty$. 	By standard calculus arguments, 
	one can verify that
		\begin{equation}
	 \eta(x) \geq \begin{cases}
 			 \frac{\Cr{ZFR}\log x}{\log Q}  & \text{if } 2 \leq x \leq  \exp(\frac{(\log Q)^2}{\Cr{ZFR} n_K}), \\[2mm]
 			\sqrt{\frac{\Cr{ZFR} \log x}{n_K}} & \text{ if }  x \geq  \exp(\frac{(\log Q)^2}{\Cr{ZFR} n_K}). 
		 \end{cases}
		 \label{def:UV_Phi}
	\end{equation}
	This proves the lemma. 
	\end{proof}

\subsection{Proof of \cref{prop:WeightedPrimeSum}} \label{proof:WeightedPrimeSum}
Choose $\epsilon$ and $\ell$ as in \eqref{eqn:ParameterChoices} and continue to assume $\epsilon < 1/4$. By \cref{lem:ContourShift,lem:SumOverSmallZeros,lem:SumOverZeros}, it follows for $2 \leq Q \leq x^{1/(36\Cr{ZDE})}$ that
\[
\frac{|G|}{|C|}\widetilde{\psi}_C(x;f)    =  (\log x) \big[ F(-\log x) - \chi_1(C) F(-\beta_1 \log x) \big] + O\big( \nu_1 x e^{-\eta(x)/2} + \mathcal{E}(x) \big),
\]
where $\mathcal{E}(x) =  x^{-1/4}  (2\ell/\epsilon)^{\ell} \log D_L  + n_L  + x^{1/4} (\log x)(\log D_L)$.  From \eqref{eqn:ParameterChoices} and Minkowski's estimate $n_L \ll \log D_L$, we see that $\mathcal{E}(x)  \ll x^{1/4} (\log D_L) (\log x)$.  From \cref{lem:disc_MaxCondBound}, $\log D_L \ll Q^{2} \ll x^{1/10}$ since $x \geq Q^{36 \Cr{ZDE}}$ and $\Cr{ZDE} \geq 1$. Hence, $\mathcal{E}(x) \ll x^{1/2}$. Using \cref{lem:WeightChoice}(v), \eqref{eqn:ParameterChoices}, and noting $\beta_1 > 1/2$, we deduce that
	\begin{equation}
\frac{|G|}{|C|} \widetilde{\psi}_C(x;f) = \Big( x - \chi_1(C) \frac{x^{\beta_1}}{\beta_1}  \Big) \big( 1 + O(n_K x^{-\frac{1}{32\Cr{ZDE}n_K}}) \big) + O( \nu_1 x e^{-\eta(x)/2} + x^{1/2}) 
\label{eqn:proof_WeightedPrimeSum}
\end{equation}
for $2 \leq Q \leq x^{1/36c_5}$. Now, we claim that
\begin{equation}
x - \chi_1(C) \frac{x^{\beta_1}}{\beta_1}  \gg \nu_1 x \gg x^{3/4}. 
\label{eqn:MainTerm_LB}
\end{equation}
If $\beta_1$ does not exist, then $\nu_1 = 1$ and \eqref{eqn:MainTerm_LB} is immediate. 
If $\beta_1$ exists and $(1-\beta_1) \log x < 1$, then since  $x \geq Q^{36\Cr{ZDE}}$ and $e^{-t} \geq 1-t$ for $0 < t < 1$, we have
\[
x - \chi_1(C) \frac{x^{\beta_1}}{\beta_1} \geq  x\Big(1- \frac{x^{-(1-\beta_1)}}{\beta_1} \Big) \geq (1-\beta_1) x \log(x/e) \gg (1-\beta_1) x \log Q = \nu_1 x. 
\]
Otherwise, $\beta_1$ exists and $(1-\beta_1) \log x \geq 1$ so $\beta_1 > 1/2$ implies that
\[
x - \chi_1(C) \frac{x^{\beta_1}}{\beta_1} \geq 
 x\Big(1- \frac{x^{-(1-\beta_1)}}{\beta_1} \Big)
 \geq x(1- 2e^{-1})\gg x \gg \nu_1 x,
\]
Thus, the claim \eqref{eqn:MainTerm_LB} follows upon noting that $\nu_1 \gg  Q^{-2} \gg x^{-1/4}$ by Stark's bound (\cref{thm:Stark}) and the condition $x \geq Q^{36 \Cr{ZDE}}$. Combining \eqref{eqn:MainTerm_LB} with \eqref{eqn:proof_WeightedPrimeSum}, it follows that
\begin{equation}
\frac{|G|}{|C|} \widetilde{\psi}_C(x;f) = \Big( x - \chi_1(C) \frac{x^{\beta_1}}{\beta_1}  \Big) \big( 1 + O(e^{-\eta(x)/2} + n_K x^{-\frac{1}{32\Cr{ZDE}n_K}}) \big). 
\label{eqn:WeightPrimeSum_Penultimate}
\end{equation}
Finally, we apply \cref{lem:ErrorOptimization_Classical} and note $n_K x^{-1/(32 \Cr{ZDE} n_K)} \ll x^{-1/(300 \Cr{ZDE} n_K)} \ll e^{-\sqrt{\Cr{ZFR}(\log x)/(4 n_K)}}$ for $x \geq Q^{36 \Cr{ZDE}}$.   This completes the proof of \cref{prop:WeightedPrimeSum}. 
\hfill
\qed

\section{Proof of \cref{thm:CDT_1,thm:main_theorem}}
\label{sec:ProofofMainTheorem}

\subsection{Abelian extensions}

First, we prove \cref{thm:main_theorem} in the case of abelian extensions. 
\begin{thm} \label{thm:NaturalPrimeSum}
	 Assume $L/K$ is abelian with Galois group $G$. Let $C \subseteq G$ be a conjugacy class. Define $Q$ by \eqref{def:Qmax}.  For $2 \leq Q \leq x^{1/\Cr{1}}$, 
	\begin{equation}
		\label{eqn:NaturalPrimeSum_Final}
		\begin{aligned}
	 \pi_{C}(x,L/K) & = \frac{|C|}{|G|} \Big( \Li(x) - \chi_1(C) \Li(x^{\beta_1}) \Big) \Big(1 + O\Big( e^{- \frac{\Cr{ZFR}}{4}\frac{\log x}{\log Q}}  +  e^{- \sqrt{\Cr{ZFR} (\log x)/8n_K}} \Big) \Big).
		\end{aligned}
	\end{equation}
	Here $\beta_1$ is a putative exceptional zero with associated real Hecke character $\chi_1$ of $L/K$.  
\end{thm}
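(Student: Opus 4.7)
The plan is to pass from the smoothed weighted sum $\widetilde\psi_C(t;f)$ of \cref{prop:WeightedPrimeSum} to $\pi_C(x,L/K)$ in two stages: first obtain an asymptotic for the unsmoothed counting function $\psi_C(t)$ that is uniform in $t\in[\sqrt x,x]$, then feed this into the partial summation identity of \cref{lem:Pi_to_Psi} and identify the resulting integrals with logarithmic integrals.

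For each $t$ in the range $Q^{\Cr{1}/2}\leq t\leq x$, the plan is to apply \cref{prop:WeightedPrimeSum} at scale $t$ (with the weight parameters $\epsilon_t=8\ell t^{-1/(8\ell)}$ and $\ell=4\Cr{ZDE} n_K$ prescribed there) and then invoke \cref{lem:Unsmooth} to replace $\widetilde\psi_C(t;f_t)$ by $\psi_C(t)$. The unsmoothing step contributes additive errors $O(n_Ft^{1/2}+\epsilon_t t)$; since $n_F\leq n_K\leq\log Q$, and since the lower bound $t-\chi_1(C)t^{\beta_1}/\beta_1\gg\nu_1 t\gg t^{3/4}$ established inside the proof of \cref{prop:WeightedPrimeSum} (via Stark's bound \cref{thm:Stark}) is available, both additive terms can be folded into the multiplicative envelope. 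This will yield
\begin{equation*}
\psi_C(t)=\frac{|C|}{|G|}\Big(t-\chi_1(C)\frac{t^{\beta_1}}{\beta_1}\Big)\bigl(1+O(E(t))\bigr),\qquad \sqrt x\leq t\leq x,
\end{equation*}
where $E(t)=\exp\bigl(-\tfrac{\Cr{ZFR}}{2}\tfrac{\log t}{\log Q}\bigr)+\exp\bigl(-\sqrt{\Cr{ZFR}(\log t)/(4n_K)}\bigr)$, provided $\Cr{1}\geq72\Cr{ZDE}$ so that $\sqrt x\geq Q^{36\Cr{ZDE}}$ throughout the range of integration.

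Substituting this into \cref{lem:Pi_to_Psi} expresses $\pi_C(x)$ as $\tfrac{|C|}{|G|}$ times the bracket
\begin{equation*}
\frac{x-\chi_1(C)x^{\beta_1}/\beta_1}{\log x}+\int_{\sqrt x}^x\frac{t-\chi_1(C)t^{\beta_1}/\beta_1}{t(\log t)^2}\,dt,
\end{equation*}
multiplied by $1+O(E(\sqrt x))$ (exploiting the monotonicity of $E(t)$ in $t$), plus additive error $O(\log D_L+n_Fx^{1/2}/\log x)$. A standard integration by parts against $(\Li t)'=1/\log t$ and $(\Li t^{\beta_1})'=t^{\beta_1-1}/\log t$ identifies the bracket with $\Li(x)-\chi_1(C)\Li(x^{\beta_1})+O(x^{1/2}/\log x)$, the boundary contribution at $\sqrt x$ supplying the $O$-term. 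Finally, the lower bound
\begin{equation*}
\Li(x)-\chi_1(C)\Li(x^{\beta_1})\gg\nu_1\Li(x)\gg Q^{-2}\,\frac{x}{\log x}
\end{equation*}
coming from \eqref{eqn:2_stark}, Stark's bound, and \cref{lem:disc_MaxCondBound} (which gives $\log D_L\ll Q^2$) will allow every remaining additive error to be absorbed into the multiplicative envelope $1+O(E(\sqrt x))$ once $\Cr{1}$ is chosen sufficiently large. Since $E(\sqrt x)$ equals the claimed error $\exp\bigl(-\tfrac{\Cr{ZFR}}{4}\tfrac{\log x}{\log Q}\bigr)+\exp\bigl(-\sqrt{\Cr{ZFR}(\log x)/(8n_K)}\bigr)$, this completes the proof.

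The main obstacle is the bookkeeping in the final absorption step: the worst case occurs precisely when $\nu_1$ is as small as Stark's bound permits, at which point verifying that the exponential factor $E(\sqrt x)$ dominates quantities of the form $n_F Q^2 x^{-1/2}$ and $Q^2\log D_L/\log x$ pins down how large $\Cr{1}$ must be taken relative to $\Cr{ZFR}$ and $\Cr{ZDE}$. The heart of the argument, however, lies entirely in \cref{prop:WeightedPrimeSum}; the material here is genuinely routine analytic bookkeeping once the lower bound on the main term proved in \cref{proof:WeightedPrimeSum} is in hand.
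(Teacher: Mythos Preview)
Your overall architecture matches the paper's: apply \cref{prop:WeightedPrimeSum}, unsmooth, then partial summation via \cref{lem:Pi_to_Psi}. However, the unsmoothing step as you describe it has a genuine gap.

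You propose to invoke the ``Moreover'' clause of \cref{lem:Unsmooth}, picking up an additive error $O(n_K t^{1/2}+\epsilon_t t)$, and then to fold this into the multiplicative envelope using $g(t):=t-\chi_1(C)t^{\beta_1}/\beta_1\gg \nu_1 t\gg t^{3/4}$. But the main term is $\tfrac{|C|}{|G|}g(t)$, not $g(t)$; since $L/K$ is abelian, $|C|=1$ and this carries a factor $1/|G|$. Converting the additive error $\epsilon_t t$ to a multiplicative one therefore costs a factor of $|G|/\nu_1$, giving a contribution of size roughly
\[
\frac{\epsilon_t\,|G|}{\nu_1}\;\ll\; Q^{3}\, n_K\, t^{-1/(32\Cr{ZDE} n_K)}.
\]
For $t=\sqrt{x}=Q^{\Cr{1}/2}$ this is $\asymp n_K\exp\bigl(\log Q\,(3-\Cr{1}/(64\Cr{ZDE} n_K))\bigr)$, which need not even be bounded: it would force $\Cr{1}\gg \Cr{ZDE} n_K$, impossible since $n_K$ is unbounded while $\Cr{1}$ must be absolute. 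The bound $g(t)\gg t^{3/4}$ fares even worse, since $\epsilon_t t/t^{3/4}=\epsilon_t t^{1/4}\to\infty$.

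The paper avoids this by \emph{not} using the ``Moreover'' clause. Instead it works with the sandwich $\psi_C(x)\le \widetilde\psi_C(x;f)+O(n_Kx^{1/2})$ and $\widetilde\psi_C(x;f)\le\psi_C(xe^{\epsilon})$ separately, and for the lower bound shows directly that
\[
\bigl|g(ye^{-\epsilon})-g(y)e^{-\epsilon}\bigr|\le \frac{y^{\beta_1}}{\beta_1}\bigl(e^{-\epsilon\beta_1}-e^{-\epsilon}\bigr)\ll y\,\epsilon\,(1-\beta_1)\ll \epsilon\, g(y),
\]
so that $g(ye^{-\epsilon})=g(y)(1+O(\epsilon))$. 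The point is that the discrepancy scales with $(1-\beta_1)$, which exactly matches the $\nu_1$ in the lower bound for $g$; this yields a clean multiplicative error $O(\epsilon)$ rather than $O(\epsilon|G|/\nu_1)$, and then $\epsilon\ll E(x)$ as in the end of \cref{proof:WeightedPrimeSum}. Your proposal is easily repaired by inserting this computation in place of the direct appeal to the ``Moreover'' clause.
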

\begin{proof}
Write $g(x) = x - \chi_1(C) \frac{x^{\beta_1}}{\beta_1}$. Select $\epsilon$ as in \eqref{eqn:ParameterChoices}. Note the assumption $2 \leq Q \leq x^{1/\Cr{1}}$  guarantees $\epsilon < 1/4$ provided $\Cr{1}$ is sufficiently large. From \cref{prop:WeightedPrimeSum} and \cref{lem:Unsmooth}, it follows that
\begin{equation}
\psi_C(x) \leq  \frac{|C|}{|G|}  g(x) \big( 1 + O(  e^{-\frac{\Cr{ZFR}}{2} \frac{\log x}{\log Q}} + e^{-\sqrt{\Cr{ZFR} (\log x)/4n_K}} ) \big) \qquad \text{for $x \geq Q^{36\Cr{ZDE}}$}.
\label{eqn:psi_C_upperbound}
\end{equation}
On the other hand, writing $y = x e^{\epsilon}$, \cref{prop:WeightedPrimeSum,lem:Unsmooth} also imply
\[
\psi_C(y) \geq \frac{|C|}{|G|} g(y e^{-\epsilon})\big(1 + O(  e^{-\frac{\Cr{ZFR}}{2} \frac{\log y}{\log Q}} + e^{-\sqrt{\Cr{ZFR} (\log y)/4n_K}} ) \big)
\]
for $y \geq 2Q^{36\Cr{ZDE}}$. By \eqref{eqn:MainTerm_LB} and elementary arguments, 
\[
\big| g(y e^{-\epsilon}) - g(y) e^{-\epsilon} \big| \leq \frac{y^{\beta_1}}{\beta_1} (e^{-\epsilon \beta_1} - e^{-\epsilon}) \ll y \epsilon(1-\beta_1) \ll \epsilon g(y). 
\]
In particular, $g(y e^{-\epsilon}) = g(y)(1+O(\epsilon))$. From our choice of $\epsilon$ in \eqref{eqn:ParameterChoices} and the condition $y \geq 2 Q^{36 \Cr{ZDE}}$, one can see that $\epsilon \ll n_K y^{-1/32 \Cr{ZDE} n_K} \ll y^{-1/300 \Cr{ZDE}n_K} \ll e^{-\sqrt{\Cr{ZFR} (\log y)/4n_K}}$ so
\[
\psi_C(y) \geq \frac{|C|}{|G|} g(y) \big(1 +O(  e^{-\frac{\Cr{ZFR}}{2} \frac{\log y}{\log Q}} + e^{-\sqrt{\Cr{ZFR} (\log y)/4n_K}} ) \big) \qquad \text{for $y \geq 2 Q^{36 \Cr{ZDE}}$.}
\]
Comparing the above with \eqref{eqn:psi_C_upperbound}, we conclude that
\[
\psi_C(x) = \frac{|C|}{|G|} g(x) \big(1 +O(  e^{-\frac{\Cr{ZFR}}{2} \frac{\log x}{\log Q}} + e^{-\sqrt{\Cr{ZFR} (\log x)/4n_K}} )
\]
for $x \geq Q^{40\Cr{ZDE}}$.  By partial summation (\cref{lem:Pi_to_Psi}) and the observation that, for $1/2 < \sigma \leq 1$, 
\begin{equation}
\frac{x^{\sigma}}{\sigma \log x} + \int_{\sqrt{x}}^x \frac{t^{\sigma-1}}{\sigma (\log t)^2} dt  = \int_{x^{\sigma/2}}^{x^{\sigma}} \frac{1}{\log t} dt = \Li(x^{\sigma}) + O\Big( \frac{x^{1/2}}{\log x} \Big), 
\label{eqn:Li_identity}
\end{equation}
it follows for $x \geq Q^{40 \Cr{ZDE}}$  that
\begin{equation*}
\frac{|G|}{|C|} \pi_C(x) = \Big(\Li(x) - \chi_1(C) \Li(x^{\beta_1}) \Big) \Big(1 +O(  e^{-\frac{\Cr{ZFR}}{4} \frac{\log x}{\log Q}} + e^{-\sqrt{\Cr{ZFR} (\log x)/8n_K}} ) \Big) + \mathcal{E}_0(x),
\label{eqn:Pi_penultimate}
\end{equation*}
where $\mathcal{E}_0(x) =   \log D_L + n_K x^{1/2}/\log x$.  By \cref{lem:disc_MaxCondBound} and the observation that $n_K \ll \log x$, one can verify that $\mathcal{E}_0(x) \ll x^{1/2}$ for $x \geq Q^{40 \Cr{ZDE}}$. Hence, by \eqref{eqn:MainTerm_LB}, $\mathcal{E}_0(x)$ can be absorbed into the error term of \eqref{eqn:Pi_penultimate}. As $\Cr{1}$ is sufficiently large, this completes the proof of \cref{thm:NaturalPrimeSum}.
\end{proof}

\subsection{Proof of \cref{thm:main_theorem}} Now we finish the proof of \cref{thm:main_theorem} for any Galois extension $L/F$ with any Galois group $G$. Using well-known arguments from class field theory, we reduce to the case of abelian extensions. 

	\begin{lem}[Murty-Murty-Saradha] \label{lem:ReductionToAbelian}
	Let $L/F$ be a Galois extension of number fields with Galois group $G$, and let $C\subseteq G$ be a conjugacy class.  Let $H$ be a subgroup of $G$ such that $C\cap H$ is nonempty, and let $K$ be the fixed field of $L$ by $H$.  Let $g\in C\cap H$, and let $C_H(g)$ denote the conjugacy class of $H$ which contains $g$.  If $x\geq 2$, then
	\[
	\Big|\pi_C(x,L/F)-\frac{|C|}{|G|}\frac{|H|}{|C_H|}\pi_{C_H}(x,L/K)\Big|\leq\frac{|C|}{|G|}\Big(n_L x^{1/2}+\frac{2}{\log 2}\log D_L\Big).
	\]
	\end{lem}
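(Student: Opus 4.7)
The plan is to introduce an auxiliary count of primes of $L$ from which both $\pi_C(x, L/F)$ and $\pi_{C_H}(x, L/K)$ can be extracted exactly, and then to bound the discrepancy between the two extracts using standard estimates for higher-degree and ramified primes. Fix a representative $g \in C \cap H$, write $f = \mathrm{ord}(g)$, and define
\[
T_F(x) = \#\{\mathfrak{P} \text{ of } L : \mathfrak{P} \text{ unram.\ over } F,\ \mathrm{Frob}_{L/F}(\mathfrak{P}) = g,\ \N_F(\mathfrak{P} \cap F) \leq x\},
\]
with $T_K(x)$ defined analogously with $F$ replaced by $K$. Since each prime $\mathfrak{p}$ counted by $\pi_C(x, L/F)$ has exactly $|Z_G(g)|/f$ primes $\mathfrak{P}$ above it with $\mathrm{Frob}_{L/F}(\mathfrak{P}) = g$ (by a Galois orbit--stabilizer count), one obtains $T_F(x) = (|Z_G(g)|/f)\, \pi_C(x, L/F)$ and similarly $T_K(x) = (|Z_H(g)|/f)\, \pi_{C_H}(x, L/K)$. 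Using the identity $|Z_H(g)|/|Z_G(g)| = |C||H|/(|G||C_H|)$, the quantity bounded in the lemma equals $(f/|Z_G(g)|)(T_F(x) - T_K(x))$.

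The next step is to verify the inclusion $T_F(x) \subseteq T_K(x)$: if $\mathrm{Frob}_{L/F}(\mathfrak{P}) = g \in H$, then the residue degree $f(\mathfrak{q}/\mathfrak{p})$ (the smallest $k$ with $g^k \in H$) equals $1$, so $\mathfrak{P}$ is also unramified over $K$, $\N_K \mathfrak{q} = \N_F \mathfrak{p} \leq x$, and $\mathrm{Frob}_{L/K}(\mathfrak{P}) = g$. A prime in $T_K(x) \setminus T_F(x)$ then falls into one of two cases. First, $\mathfrak{P}$ may be unramified over $F$ with $\mathrm{Frob}_{L/F}(\mathfrak{P}) = \sigma \neq g$ satisfying $\sigma^{f(\mathfrak{q}/\mathfrak{p})} = g$, which forces $f(\mathfrak{q}/\mathfrak{p}) \geq 2$, so $\mathfrak{q}$ is a higher-degree prime of $K$ over $\Q$; the number of such $\mathfrak{q}$ with $\N_K \mathfrak{q} \leq x$ is $\ll n_K x^{1/2}$, each supporting at most $|Z_H(g)|/f$ primes $\mathfrak{P}$ with $\mathrm{Frob}_{L/K}(\mathfrak{P}) = g$. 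Second, $\mathfrak{P}$ may be ramified over $F$ but unramified over $K$, which, by multiplicativity of ramification indices, forces $\mathfrak{q}$ to be ramified in $K/F$. Here the tower identity $D_L = \N_{K/\Q}(\mathrm{disc}(L/K)) \cdot D_K^{|H|}$ yields $\log D_K \leq (\log D_L)/|H|$, so the number of such $\mathfrak{q}$ is at most $(\log D_L)/(|H| \log 2)$, again each supporting at most $|Z_H(g)|/f$ primes.

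Combining the two cases and multiplying by $f/|Z_G(g)|$, the higher-degree contribution simplifies to $\tfrac{|C|}{|G|} n_L x^{1/2}$ via $|H| n_K = n_L$ and $|Z_H(g)|/|C_H| \leq |H|$, while the ramified contribution becomes $\tfrac{|C|}{|G||C_H|}\cdot \tfrac{\log D_L}{\log 2} \leq \tfrac{|C|}{|G|}\cdot \tfrac{\log D_L}{\log 2}$; together these produce the bound claimed in the lemma, with some slack in the numerical constant. The main obstacle in this plan is the ramified-prime bookkeeping: one must extract the factor $1/|H|$ from the discriminant bound to cancel the factor $|H|$ that would otherwise appear when counting primes of $K$ lying above each ramified $\mathfrak{p}$ of $F$. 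This cancellation is exactly what the tower identity for discriminants provides, and it is the crucial input that makes the final error term clean in terms of $n_L$ and $\log D_L$ alone.
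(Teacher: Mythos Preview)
Your argument is correct and is precisely the standard Deuring--Lagarias--Odlyzko reduction; the paper itself does not supply a proof but simply cites \cite[Proposition 3.9]{MMS}, and your reconstruction is essentially what that reference carries out. One small notational slip: where you write ``$|Z_H(g)|/|C_H| \leq |H|$'' you presumably mean $|Z_H(g)| \leq |H|$ (equivalently $|Z_H(g)|\cdot|C_H| = |H|$), which is what the simplification actually uses; with this read correctly, your bound is in fact slightly sharper than the stated $\frac{2}{\log 2}\log D_L$, consistent with the slack you note.
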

	\begin{proof}
		This is carried out during the proof of \cite[Proposition 3.9]{MMS}.
	\end{proof}
	Now, we apply \cref{lem:ReductionToAbelian} and subsequently \cref{thm:NaturalPrimeSum} to $\pi_{C_H}(x,L/K)$ of the abelian extension $L/K$. Consequently, for $2 \leq Q \leq x^{1/\Cr{1}}$, 
	\begin{equation}
	\begin{aligned}
	\frac{|G|}{|C|} \pi_C(x,L/F) 
	& =   \Big( \Li(x) - \chi_1(C) \Li(x^{\beta_1}) \Big) \Big(1 + O\Big( e^{- \frac{\Cr{ZFR}}{4}\frac{\log x}{\log Q}}  +  e^{- \sqrt{\Cr{ZFR} (\log x)/8n_K}} \Big) \Big) \\
	& \qquad \qquad + O( n_L x^{1/2} + \log D_L), 
	\end{aligned}
	\label{eqn:main_theorem_Penultimate}
	\end{equation}
	where $Q = Q(L/K)$ is defined by \eqref{def:Qmax}. Since we may assume $\Cr{1} \geq 20$, it follows from \cref{lem:disc_MaxCondBound} and Minkowski's estimate $n_L \ll \log D_L$ that $n_L x^{1/2} + \log D_L \ll x^{5/8}$ for $x \geq Q^{\Cr{1}}$. From \eqref{eqn:MainTerm_LB}, this estimate may be absorbed into the first error term of \eqref{eqn:main_theorem_Penultimate} since $x^{5/8-3/4} = x^{-1/8} \ll e^{-\sqrt{\Cr{ZFR}(\log x)/8n_K}}$. This completes the proof of \cref{thm:main_theorem}. \hfill \qed

\begin{proof}[\cref{thm:main_theorem} implies \cref{thm:CDT_1}]
Fix $g\in C$, let $H$ in \cref{thm:main_theorem} be the cyclic group generated by $g$, and let $K$ be the fixed field of $H$.  Clearly $n_K\leq n_L$, and the centered equation immediately below \cite[Equation 1-7]{TZ1} states $D_L^{1/|H|}\leq D_K\mathcal{Q}\leq D_L^{1/\varphi(|H|)}$.  \cref{thm:CDT_1} now follows.
\end{proof}

\section{Reduced composition of beta-sieves}
\label{sec:Sieve}

Before proceeding to the proof of \cref{thm:application}, we require some sieve machinery that follows from standard results. The setup and discussion here closely follow \cite[Sections 5.9 and 6.3--6.5]{FI}. Let $\Lambda'$ and $\Lambda''$ be beta sieve weights with the same sifting level $z$ and same level of distribution $R$. That is, $\lambda_{d}'$ and $\lambda_{d}''$   satisfy 
\[
\lambda'_{1}=\lambda''_{1}=1, \qquad |\lambda_{d}'| \leq 1, \qquad |\lambda_{d}''| \leq 1,
\]
 and are supported on squarefree numbers $d < R$ consisting of prime factors $\leq z$. Let
\[
s = \frac{\log R}{\log z}
\] 
be the sifting variable for both sieves. 
Let $g'$ and $g''$ be multiplicative functions satisfying 
\begin{equation}
0 \leq g'(p)  < 1, \qquad 0 \leq g''(p) < 1, \qquad  g'(p) + g''(p) < 1 \qquad \text{for all primes $p$}.
\label{eqn:size_g}
\end{equation}
Assume there exists $K > 1$ and $\kappa > 0$ such that
\begin{equation}
	\begin{aligned}
	\prod_{w \leq p < z} \Big(1 - \frac{g'(p)}{1-g'(p)-g''(p)} \Big)^{-1} & \leq K  \Big( \frac{\log z}{\log w}\Big)^{\kappa} \qquad \text{and} \\\prod_{w \leq p < z} \Big(1 - \frac{g''(p)}{1-g'(p)-g''(p)} \Big)^{-1} & \leq K  \Big( \frac{\log z}{\log w}\Big)^{\kappa} \qquad \text{for all $2 \leq w \leq z$}. 
	\end{aligned}
	\label{eqn:sieve_dimension}
\end{equation}
The goal of this section is to estimate the reduced composition given by
\begin{equation}
	G := \dsum_{\gcd(d_1,d_2)=1} \lambda'_{d_1}  \,\lambda_{d_2}''\,g'(d_1) \, g''(d_2).
	\label{def:reduced_composition}
\end{equation}
 This expression can arise as the main term when two different sieves are applied to two different sequences that are linearly independent.   Keeping this setup, the remainder of this section will be dedicated to the proof of the following theorem.
\begin{thm} \label{thm:CompositionBetaSieves}
	 Assume $s > 9\kappa + 1 + 10 \log K$, \eqref{eqn:size_g} holds, and \eqref{eqn:sieve_dimension} holds. If $\Lambda'$ and $\Lambda''$ are upper bound beta sieves, then
	\[
	\dsum_{\gcd(d_1,d_2)=1} \lambda'_{d_1}  \,\lambda_{d_2}''\,g'(d_1) \, g''(d_2) \leq \prod_p (1-g'(p) - g''(p)) \big\{ 1 +  e^{9\kappa - s} K^{10}\}^2.
	\]
	If $\Lambda'$ is a lower bound beta sieve and $\Lambda''$ is an upper bound beta sieve, then
	\[
	\dsum_{\gcd(d_1,d_2)=1} \lambda'_{d_1}  \,\lambda_{d_2}''\,g'(d_1) \, g''(d_2) \geq \prod_p (1-g'(p)-g''(p)) \big\{ 1 -   e^{9\kappa - s} K^{10}\}.
	\]
	
\end{thm}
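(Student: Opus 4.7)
The plan is to apply the fundamental lemma of the beta-sieve iteratively, first handling the inner sum over $d_2$ with $d_1$ held fixed, and then the remaining outer sum over $d_1$. The coprimality condition is encoded by restricting the multiplicative function $g''$: setting $\tilde{g}''(p) = g''(p)\mathbf{1}[p \nmid d_1]$ and extending multiplicatively, the inner sum becomes
\[
W(d_1) := \sum_{(d_2, d_1) = 1} \lambda''_{d_2} g''(d_2) = \sum_{d_2} \lambda''_{d_2} \tilde{g}''(d_2),
\]
a standard beta-sieve sum. After substituting the fundamental lemma's conclusion $W(d_1) = \tilde{V}''(z;d_1)\{1 + O(E)\}$ with $\tilde{V}''(z;d_1) = \prod_{p<z,\,p\nmid d_1}(1-g''(p))$, and factoring $\tilde{V}''(z;d_1) = V''(z)\prod_{p|d_1}(1-g''(p))^{-1}$, the outer sum collapses to $V''(z)\sum_{d_1}\lambda'_{d_1} g^*(d_1)$ with $g^*(p) := g'(p)/(1-g''(p))$; a second application of the fundamental lemma together with the identity $V''(z)V^*(z) = \prod_{p<z}(1-g'(p)-g''(p))$ then produces the expected main term.

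A preliminary technical step verifies that the multiplicative functions $\tilde{g}''$ and $g^*$ inherit the sieve dimension bound with the same constants $K, \kappa$ from \eqref{eqn:sieve_dimension}. For $\tilde{g}''$ this is immediate from $\tilde{g}'' \leq g''$ together with the inequality $(1-g''(p))^{-1} \leq (1-g''(p)/(1-g'(p)-g''(p)))^{-1}$. For $g^*$, use the identity $(1-g^*(p))^{-1} = 1 + g'(p)/(1-g'(p)-g''(p)) \leq (1-g'(p)/(1-g'(p)-g''(p)))^{-1}$, via $1+x \leq (1-x)^{-1}$, combined with \eqref{eqn:sieve_dimension}.

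The main obstacle is controlling the propagated error from the inner sum. A naive substitution yields a residual involving $\sum_{d_1}|\lambda'_{d_1}| g^*(d_1)$, which cannot be bounded by a constant multiple of $V^*(z)$ without introducing extraneous factors of $(\log z)^\kappa$ via the dimension bound. The remedy is a four-fold decomposition $\lambda'_{d_1} = \mu^*(d_1) + \delta'_{d_1}$ and $\lambda''_{d_2} = \mu^*(d_2) + \delta''_{d_2}$, where $\mu^*(d) := \mu(d)\mathbf{1}[d \mid P(z)]$, yielding $G = G_0 + G_1 + G_2 + G_{12}$. Here $G_0 = \prod_{p<z}(1-g'(p)-g''(p))$ arises from direct multiplicative expansion; the mixed cross-terms $G_1, G_2$ each reduce to a single application of the fundamental lemma against the multiplicative functions $g^*$ or $g^\dagger := g''/(1-g')$, contributing $O(E)$ times $G_0$; and the double error $G_{12}$ is bounded by $O(E^2)$ times $G_0$ by iterating the argument, exploiting that $\delta'_{d_1}$ and $\delta''_{d_2}$ themselves satisfy $O(E)$-type estimates when paired with multiplicative functions that inherit the dimension bound. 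The same framework, with appropriate sign considerations, handles both the upper-bound case (contributing $\{1 + O(E)\}^2$) and the mixed lower/upper-bound case (contributing $\{1 - O(E)\}$, where the non-squared form reflects that only a one-sided bound on $G$ is asserted); the hypothesis $s > 9\kappa + 1 + 10\log K$ ensures $K^{10} e^{9\kappa - s} < 1/e$, which justifies collecting the iterated errors into the stated bounded-error form.
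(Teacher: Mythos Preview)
Your approach differs substantially from the paper's, and the treatment of the cross-term $G_{12}$ contains a genuine gap. When you bound the inner sum by the fundamental lemma, $\bigl|\sum_{(d_2,d_1)=1}\delta''_{d_2}g''(d_2)\bigr| \le E\,\tilde V''(z;d_1)$, and then sum over $d_1$, you obtain
\[
|G_{12}| \le E\,V''(z)\sum_{d_1}\bigl|\delta'_{d_1}\bigr|\,g^*(d_1).
\]
The fundamental lemma controls $\sum_{d_1}\delta'_{d_1}g^*(d_1)$, not $\sum_{d_1}|\delta'_{d_1}|g^*(d_1)$; the latter is essentially $\sum_{d\mid P(z)}g^*(d)+\sum_{d<R}g^*(d)$, which is of size $\prod_{p<z}(1+g^*(p))$ and exceeds $V^*(z)$ by exactly the factor $(\log z)^{O(\kappa)}$ you were trying to avoid. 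Your phrase ``$\delta'_{d_1}$ and $\delta''_{d_2}$ themselves satisfy $O(E)$-type estimates when paired with multiplicative functions'' is true for \emph{signed} sums, but the iteration forces you through an absolute-value sum, and no mechanism is given to circumvent this. Consequently neither the claimed $O(E^2)$ for $G_{12}$ nor even $O(E)$ is established, and the stated constants $(1+E)^2$ and $(1-E)$ are not reached.

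The paper sidesteps this entirely by passing to the convolution inverses $\theta'=1\ast\lambda'$ and $\theta''=1\ast\lambda''$ via \cite[Lemma~5.6]{FI}, which yields
\[
G=\Bigl(\prod_p(1-g'(p)-g''(p))\Bigr)\sum_{(b_1,b_2)=1}\theta'_{b_1}\theta''_{b_2}\,\tilde h'(b_1)\tilde h''(b_2),
\]
with $\tilde h'(p)=g'(p)/(1-g'(p)-g''(p))$ and similarly for $\tilde h''$. The point is that $\theta''_n\ge 0$ for an upper-bound sieve and $\theta'_n\le 0$ for $n\ge 2$ for a lower-bound sieve, so every term with $(b_1,b_2)\ne 1$ has a definite sign. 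Dropping the coprimality constraint therefore gives a \emph{one-sided} inequality in the desired direction, after which the double sum factors cleanly as $\bigl(\sum_{b_1}\theta'_{b_1}\tilde h'(b_1)\bigr)\bigl(\sum_{b_2}\theta''_{b_2}\tilde h''(b_2)\bigr)$ and each factor is handled by a single appeal to the fundamental lemma (or, for the lower bound, by the trivial estimate $\sum_{b_2}\theta''_{b_2}\tilde h''(b_2)\ge \theta''_1\tilde h''(1)=1$). No cross-term of the $G_{12}$ type ever arises.
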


Assume $\lambda'$ is a lower bound beta sieve and $\lambda''$ is an upper bound beta sieve. The other case is entirely analogous. Thus, if $\theta' =1 \ast \lambda'$ and $\theta'' = 1 \ast \lambda''$ then
\begin{equation}
\theta'_1 = \theta''_1 = 1 \qquad \text{and} \qquad \theta'_n \leq 0 \leq \theta''_n \quad \text{for $n \geq 2$}. 
\label{eqn:lowerbound_sieve}
\end{equation}
As a first step, we apply \cite[Lemma 5.6]{FI} to \eqref{def:reduced_composition} and see that
\begin{equation}
G  = \dsum_{\gcd(b_1,b_2)=1} \theta_{b_1}' \theta_{b_2}'' g'(b_1) g''(b_2) \prod_{p \nmid b_1b_2} (1- g'(p)-g''(p)). 
\label{eqn:invert_sieve}
\end{equation}
Define $\tilde{h}', \tilde{h}''$ and $\tilde{g}', \tilde{g}''$ to be multiplicative functions supported on squarefree numbers with
\[
\tilde{h}'(p) =  \frac{g'(p)}{1-g'(p)-g''(p)}, \quad \tilde{h}''(p) =  \frac{g''(p)}{1-g'(p)-g''(p)}, \quad  \tilde{g}'(p) = \frac{g'(p)}{1-g''(p)}, \quad \tilde{g}''(p) = \frac{g''(p)}{1-g'(p)}.
\]
Thus we obtain the usual relations
\begin{equation}
\tilde{h}'(p) =  \frac{\tilde{g}'(p)}{1-\tilde{g}'(p)} \quad \text{and} \quad \tilde{h}''(p) =  \frac{\tilde{g}''(p)}{1-\tilde{g}''(p)}.
\label{eqn:sieve_relations}
\end{equation}
Note $\tilde{h}'(p), \tilde{h}''(p) \geq 0$ and $0 \leq \tilde{g}'(p), \tilde{g}''(p) < 1$ by \eqref{eqn:size_g}. Inserting these definitions into \eqref{eqn:invert_sieve}, we observe that
\[
G = \Big(\prod_p (1-g'(p)-g''(p))\Big) \dsum_{\gcd(b_1,b_2)=1} \theta_{b_1}' \theta_{b_2}'' \tilde{h}'(b_1) \tilde{h}'(b_2). 
\]
If $\gcd(b_1,b_2) \neq 1$ then the expression $\theta_{b_1}' \theta_{b_2}'' \tilde{h}'(b_1) \tilde{h}''(b_2)$ is non-positive by \eqref{eqn:lowerbound_sieve}, so we may introduce all of these terms at the cost of a lower bound for $G$. Thus
\begin{equation}
G \geq \Big(\prod_p (1-g'(p)-g''(p))\Big) \Big( \sum_{b_1} \theta'_{b_1} \tilde{h}'(b_1) \Big) \Big( \sum_{b_2} \theta''_{b_2} \tilde{h}''(b_2) \Big). 
\label{eqn:G_lowerbound}
\end{equation}
The two sums in \eqref{eqn:G_lowerbound} are prepared for standard beta-sieve analysis. 

\begin{lem} \label{lem:FundamentalLemma}
	If $\Lambda'$ is a lower bound beta-sieve with $\beta = 9\kappa + 1$ and $s \geq \beta$ then
	\[
	\sum_b \theta_b' \tilde{h}'(b) \geq 1 - e^{9\kappa - s} K^{10}.
	\]
	If $\Lambda''$ is an upper bound beta-sieve with $\beta = 9\kappa+1$ and $s \geq \beta$ then
	\[
	\sum_b \theta_b'' \tilde{h}''(b) \leq 1 + e^{9\kappa -s} K^{10}.
	\]
\end{lem}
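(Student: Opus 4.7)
The plan is to recognize each of the two sums as a classical beta-sieve main term and then invoke the fundamental lemma of the beta sieve. The two cases are structurally identical, so I will describe the argument only for $\sum_b \theta'_b\, \tilde{h}'(b)$; the companion sum is handled identically with $\Lambda'$ and $\tilde{h}'$ replaced by $\Lambda''$ and $\tilde{h}''$, and the lower-bound role played by the upper-bound role.

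First, I would swap the order of summation. Writing $\theta'_b = \sum_{d \mid b}\lambda'_d$, restricting (as is standard for this sieve object) to squarefree $b$ composed of primes $p < z$, and using the multiplicativity of $\tilde{h}'$, one obtains
\[
\sum_b \theta'_b\, \tilde{h}'(b) \;=\; \sum_d \lambda'_d\, \tilde{h}'(d) \prod_{\substack{p < z \\ p\,\nmid\, d}} \bigl(1 + \tilde{h}'(p)\bigr).
\]
Using the identity $1 + \tilde{h}'(p) = (1 - \tilde{g}'(p))^{-1}$, immediate from \eqref{eqn:sieve_relations}, this collapses into
\[
\sum_b \theta'_b\, \tilde{h}'(b) \;=\; V_1(z)^{-1} \sum_d \lambda'_d\, \tilde{g}'(d), \qquad V_1(z) := \prod_{p < z}\bigl(1 - \tilde{g}'(p)\bigr).
\]
This is the crucial reformulation: the sum in question equals, up to the normalizing factor $V_1(z)^{-1}$, exactly the standard beta-sieve sum for the multiplicative density $\tilde{g}'$.

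Next, I would invoke the classical fundamental lemma of the beta sieve, applied to $\tilde{g}'$ at sifting level $R = z^s$. The hypothesis \eqref{eqn:sieve_dimension}, translated through $(1-\tilde{g}'(p))^{-1} = 1 + \tilde{h}'(p)$, is precisely the dimension-$\kappa$ axiom with constant $K$ for the density $\tilde{g}'$. With $\Lambda'$ a lower-bound Rosser-Iwaniec beta sieve and parameter $\beta = 9\kappa + 1 \leq s$, the fundamental lemma (see, e.g., \cite[\S 6.5]{FI}) then yields
\[
\sum_d \lambda'_d\, \tilde{g}'(d) \;\geq\; V_1(z)\bigl\{1 - e^{9\kappa - s} K^{10}\bigr\},
\]
which, combined with the previous display, proves the first claim. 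Repeating the calculation with the upper-bound beta sieve $\Lambda''$ and density $\tilde{g}''$ proves the second.

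The main (and only real) obstacle is bookkeeping: one must check that the combinatorial collapse above genuinely produces the expression $V_1(z)^{-1}\sum_d \lambda'_d\, \tilde{g}'(d)$, and that the explicit numerical constants ($\beta = 9\kappa + 1$ and error $e^{9\kappa - s} K^{10}$) are those supplied by the fundamental lemma at dimension $\kappa$ and density constant $K$. Both verifications are routine once the reformulation is in hand, so the lemma is, in effect, a restatement of the Rosser-Iwaniec fundamental lemma in the parameterization of the paper; no new sieve argument is needed.
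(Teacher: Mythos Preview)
Your proposal is correct and follows essentially the same approach as the paper. Both arguments identify $\sum_b \theta_b'\,\tilde{h}'(b)$ with the standard beta-sieve main term for the density $\tilde{g}'$ and then invoke the Fundamental Lemma from \cite[Section~6.5]{FI}; your explicit derivation of the identity $\sum_b \theta_b'\,\tilde{h}'(b) = V_1(z)^{-1}\sum_d \lambda_d'\,\tilde{g}'(d)$ is precisely the content of the reference to \cite[Equation~6.40]{FI} that the paper cites, so the two proofs differ only in whether this reformulation is written out or quoted.
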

\begin{proof}
	This statement is essentially the Fundamental Lemma \cite[Lemma 6.8]{FI}. To make the comparison clear with \cite[Sections 6.3--6.5]{FI}, one begins with \cite[Equation 6.40]{FI} with their $D, h, g$ replaced by our $R, \tilde{h}', \tilde{g}'$ (or $R, \tilde{h}'', \tilde{g}''$, respectively). Per the definition of $V(z)$ on \cite[p. 56]{FI}, it follows that
	\[
	V(z) = \prod_{p < z} (1-\tilde{g}'(p)) .
	\]
	Thus the assumption \cite[Equation 5.38]{FI} corresponds to our \eqref{eqn:sieve_dimension}. Next, one defines $V_n$ just as in the equation at the top of \cite[p. 63]{FI}; in doing so, we obtain \cite[Equations 6.43 and 6.44]{FI}. Finally, using the same truncation parameters, the analysis of \cite[Section 6.5]{FI} leading up to \cite[Lemma 6.8]{FI} yields our result.
\end{proof}

Now, we apply \cref{lem:FundamentalLemma} to the sum over $b_1$ (the lower bound sieve $\Lambda'$) in \eqref{eqn:G_lowerbound}. Note that the assumption $s > 9\kappa + 1 + 10\log K$ implies that this sum over $b_1$ is positive. By the positivity of $\tilde{h}$ and \eqref{eqn:lowerbound_sieve}, we may trivially estimate the sum over $b_2$ in \eqref{eqn:G_lowerbound} by
\[
\sum_{b_2}  \tilde{h}''(b_2) \,\theta''_{b_2} \geq  \tilde{h}''(1) \, \theta''_1 = 1. 
\]
This proves the lower bound in \cref{thm:CompositionBetaSieves}. For the upper bound, we follow the same arguments and apply \cref{lem:FundamentalLemma} twice (once to each sieve) in these final steps. \hfill \qed

\section{Restricted primes represented by binary quadratic forms	}
\label{sec:BQF}

We recall the setup in \cref{subsec:application}.  Let
\[
f(u,v) = au^2 + buv + cv^2 \in\Z[u,v]
\]
be a positive definite binary quadratic form of discriminant $D = b^2 - 4ac<0$, not necessarily fundamental. The group $\SL_2(\Z)$ naturally acts on such forms by $(T \cdot f)(\mathbf{x}) = f(T \mathbf{x})$ for $T \in \SL_2(\Z)$.  The class number $h(D)$ is the number of such forms up to $\SL_2$-equivalence.  We assume that $f$ is primitive (that is, $\gcd(a,b,c) = 1$), and we define
\[
\mathrm{stab}(f) = \{ T \in \SL_2(\Z) : T \cdot f = f \}.
\]
Note $|\stab(f)| = 2$ unless $D = -3$ or $-4$ in which case it equals $6$ and $4$ respectively. 

\subsection{Proof of \cref{thm:application}} Let $1 \leq R \leq x^{1/10}$ be a parameter yet to be specified. Let $\Lambda' = (\lambda_d')_d$ and $\Lambda''=(\lambda_d'')_d$ be sieve weights supported on squarefree integers $d \mid P$  satisfying
\begin{equation}
\lambda_1' = \lambda_1'' = 1, \qquad |\lambda_d'| \leq 1, \qquad |\lambda_d''| \leq 1 \quad \text{for $d \geq 1$},  \qquad \lambda_d' = \lambda_d'' = 0 \quad \text{for $d \geq R$}. 
\label{eqn:sieve_constraint}
\end{equation}
We approximate the condition $(uv,P)=1$ in \eqref{eqn:BQF_primes} by considering the sieved sum
\begin{equation}
	\label{eqn:sum_1}
	S(x) = S(x; \Lambda', \Lambda'') := \frac{1}{|\stab(f)|} \dsum_{\substack{u,v\in\Z \\ f(u,v)\leq x}}\mathbf{1}_{\mathbb{P}}(f(u,v))\Big(\sum_{d_1\mid u}\lambda_{d_1}'\Big)\Big(\sum_{d_2\mid v}\lambda_{d_2}''\Big). 
\end{equation}
By swapping the order of summation, 
\begin{equation}
	\label{eqn:sum_2}
	S(x) = \dsum_{\substack{d_1, \, d_2 \\ \gcd(d_1,d_2)=1}}\lambda_{d_1}'\lambda_{d_2}''\,  A_{d_1,d_2}(x), 
\end{equation}
where
\begin{equation}
\label{def:bqf_congruence_sum}
A_{d_1,d_2}(x) = \frac{1}{|\stab(f)|} \dsum_{\substack{f(u,v)\leq x \\ d_1\mid u, \, \,d_2\mid v}}\mathbf{1}_{\mathbb{P}}(f(u,v)).
\end{equation}
Before computing the congruence sums $A_{d_1,d_2}(x)$, we introduce the local densities $g'$ and $g''$.  These are multiplicative functions defined by
\begin{equation}
\begin{aligned}
	g'(p) & = \begin{cases}
 		\big( p - (\frac{D}{p}) \big)^{-1} & \text{if $p \mid P$ and $p \nmid c$,} \\ 
 		0 & \text{otherwise,}
 \end{cases} \\
 g''(p) & = \begin{cases}
 		\big( p - (\frac{D}{p}) \big)^{-1} & \text{if $p \mid P$ and $p \nmid a$,} \\ 
 		0 & \text{otherwise.}
 \end{cases}
 \end{aligned}
 \label{def:bqf_local_density}
\end{equation}
Here $(\frac{D}{p})$ is the usual Legendre symbol for $p \neq 2$ and 
\begin{equation}
	\label{def:Legendre_2}
	\Big(\frac{D}{2}\Big)=\begin{cases}
		0&\mbox{if $2\mid D$,}\\
		1&\mbox{if $D\equiv 1\pmod{8}$,}\\
		-1&\mbox{if $D\equiv 5\pmod{8}$.}
	\end{cases}
\end{equation}
Our main result on the Chebotarev density theorem (\cref{thm:main_theorem})  yields the following key lemma whose proof is postponed to \cref{subsec:bqf_lem}.
\begin{lem}
	\label{lem:bqfs_congruence_sum}
	Let $\gamma > 0$ and $\vartheta>0$ be a sufficiently small absolute constants, and let $d_1, d_2$ be relatively prime integers dividing $P$. If $|d_1d_2 D| \leq x^{\gamma}$ then
\begin{equation}
\begin{aligned}
	A_{d_1,d_2}(x)  = g'(d_1) g''(d_2) \frac{\Li(x) - \Li(x^{\beta_1})}{h(D)} \{1 + O(\epsilon_{d_1d_2}(x))\} + O( \sqrt{x} \log x),
\end{aligned}
\label{eqn:bqf_congruence_sum_3}
\end{equation}
where $\beta_1$ is a simple real zero of the Dedekind zeta function $\zeta_{\Q(\sqrt{D})}(s)$ (if it exists) and
\begin{equation}
	\label{eqn:remainder_error}
	\epsilon_d(x) = \epsilon_d(x;D)=\exp\Big[-\vartheta\frac{\log x}{\log|dD|}\Big]+\exp\Big[-(\vartheta \log x)^{1/2}\Big] \quad \text{for $d \geq 1$.}
\end{equation}  
\end{lem}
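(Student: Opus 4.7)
The plan is to express $A_{d_1,d_2}(x)$ as a sum of Chebotarev prime counts in a suitable abelian extension $L/K$ with $K = \mathbb{Q}(\sqrt{D})$, and then apply \cref{thm:main_theorem}. Let $\mathcal{O} \subset K$ be the order of discriminant $D$. By the classical bijection between $\SL_2(\mathbb{Z})$-classes of primitive positive definite forms of discriminant $D$ and the proper ideal class group $\mathrm{Cl}(\mathcal{O})$, the form $f$ corresponds to an invertible ideal class $[\mathfrak{a}_f]$. For each prime $p \nmid 2D$, the $\stab(f)$-orbits of representations $p = f(u,v)$ correspond bijectively to prime ideals $\mathfrak{p} \subset \mathcal{O}$ of norm $p$ lying in $[\mathfrak{a}_f]$; writing $\omega = (-b + \sqrt{D})/(2a)$ and $\alpha = u + v\omega$ for a generator of $\mathfrak{p}\mathfrak{a}_f^{-1}$ (up to units), the conditions $d_1 \mid u$ and $d_2 \mid v$ translate into congruence conditions on $\alpha$ modulo $P\mathcal{O}_K$ with $P = d_1 d_2$.

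Let $L$ denote the ray class field of $K$ of modulus $P\mathcal{O}_K$ containing the ring class field of $\mathcal{O}$. Then $L/K$ is abelian, and
\[
A_{d_1,d_2}(x) = \sum_{r \in \mathcal{R}} \pi_{C_r}(x, L/K) + O(\sqrt{x}\log x),
\]
where $\mathcal{R}$ indexes residues $r = (u_0,v_0)$ modulo $P$ with $d_1 \mid u_0$, $d_2 \mid v_0$, $\gcd(f(u_0,v_0), P) = 1$, and $C_r \subset \mathrm{Gal}(L/K)$ is the associated Artin class. The error absorbs ramified primes, prime powers, primes $p \leq P$, and the anomalous stabilizer action when $D = -3,-4$. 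Standard conductor estimates give $\mathcal{Q}(L/K) \ll |PD|^{O(1)}$; combined with $D_K \ll |D|$ and $n_K = 2$, this yields $D_K \mathcal{Q} n_K^{n_K} \ll |PD|^{O(1)}$. The hypothesis $|d_1 d_2 D| \leq x^{\gamma}$ with $\gamma > 0$ sufficiently small therefore guarantees $D_K \mathcal{Q} n_K^{n_K} \leq x^{1/\Cr{1}}$, so \cref{thm:main_theorem} applies to each $\pi_{C_r}(x, L/K)$.

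The Landau-Siegel zero $\beta_1$ in the statement, if it exists, is that of $L(s,\chi_D) = \zeta_{\mathbb{Q}(\sqrt{D})}(s)/\zeta(s)$, and it appears as a zero of $\zeta_L(s)$ via the factor $\zeta_K = \zeta_{\mathbb{Q}} L(s,\chi_D)$ corresponding to the trivial character of $\mathrm{Gal}(L/K)$; hence $\theta_1(C_r) = 1$ for all $r$. \cref{thm:main_theorem}, together with $n_K = 2$ and $\log(D_K \mathcal{Q} n_K^{n_K}) \ll \log|d_1 d_2 D|$, then produces
\[
\pi_{C_r}(x, L/K) = \frac{|C_r|}{|\mathrm{Gal}(L/K)|}\bigl(\Li(x) - \Li(x^{\beta_1})\bigr)\bigl(1 + O(\epsilon_{d_1 d_2}(x))\bigr),
\]
with the error term matching \eqref{eqn:remainder_error}. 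Summing yields $\sum_{r \in \mathcal{R}} |C_r|/|\mathrm{Gal}(L/K)| = g'(d_1) g''(d_2)/h(D)$: the factor $h(D)^{-1}$ reflects the form-class condition, while at each prime $p \mid P$ a CRT count of residues modulo $p$ satisfying the divisibility and coprimality constraints gives $1/(p - (\frac{D}{p}))$ if $p \mid d_1$ and $p \nmid c$ (or $p \mid d_2$ and $p \nmid a$), and $0$ otherwise.

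The main obstacle will be the local density computation, which requires careful casework at each $p \mid P$ according to the splitting behavior of $p$ in $K$ (split, inert, ramified) and whether $p$ divides $a$ or $c$. A secondary subtle point is verifying that the Landau-Siegel zero of $\zeta_L$, if any, coincides with that of $\zeta_{\mathbb{Q}(\sqrt{D})}$ and does not arise from some other real Hecke character of $L/K$; this rests on Stark's uniqueness (\cref{thm:Stark}) combined with the specific ray class structure over the imaginary quadratic field $K$. Finally, an explicit conductor bound $\mathcal{Q}(L/K) \ll |PD|^{O(1)}$ must be obtained via standard bounds on ray class conductors in order to secure the uniformity range $|d_1 d_2 D| \leq x^{\gamma}$.
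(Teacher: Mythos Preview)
Your approach is genuinely different from the paper's, and while it can be made to work, it has a real gap in the Landau--Siegel zero step.

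\textbf{The paper's route.} Rather than passing to a ray class field and summing over many residue classes, the paper makes the substitution $u=d_1 s$, $v=d_2 t$ to obtain the single form
\[
f_{d_1,d_2}(s,t)=a d_1^2 s^2+b d_1 d_2 st+c d_2^2 t^2,
\]
of discriminant $D(d_1d_2)^2$. When $(d_1,c)=(d_2,a)=1$ this form is primitive, and $A_{d_1,d_2}(x)$ becomes (up to $O(\sqrt{x}\log x)$) a single Chebotarev count in the \emph{ring class field} $L_{d_1d_2}$ of the order $\mathcal{O}_{d_1d_2}$ of discriminant $D(d_1d_2)^2$. Theorem~1.3 applies directly to that one class, and the density $g'(d_1)g''(d_2)/h(D)$ drops out of the class number relation
\[
h(D d^2)=\frac{h(D)}{[\mathcal{O}_1^\times:\mathcal{O}_d^\times]}\,d\prod_{p\mid d}\Bigl(1-\Bigl(\tfrac{D}{p}\Bigr)\tfrac{1}{p}\Bigr),
\]
with no local casework required. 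The degenerate cases $(d_1,c)\neq 1$ or $(d_2,a)\neq 1$ are disposed of in one line since $f_{d_1,d_2}$ is then imprimitive and represents $O(1)$ primes.

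\textbf{Your gap.} Your identification of the exceptional zero is not correctly justified. You cite \cref{thm:Stark}, but that is Stark's effective lower bound for $1-\beta_1$, not a statement about which $L$-function carries the zero. The paper instead invokes the Heilbronn--Stark theorem: if $M/\mathbb{Q}$ is Galois and $\zeta_M$ has a simple real zero $\beta$, then $\zeta_F(\beta)=0$ for some quadratic subfield $F\subseteq M$. For the ring class field $L_{d_1d_2}$, the Galois group over $\mathbb{Q}$ is generalized dihedral, so $K=\mathbb{Q}(\sqrt{D})$ is the \emph{unique} quadratic subfield, forcing $\zeta_K(\beta_1)=0$ and hence $\chi_1$ trivial, $\theta_1=1$. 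Your ray class field $L$ of modulus $d_1d_2\mathcal{O}_K$, by contrast, contains $K(\zeta_{d_1d_2})$ and therefore every quadratic subfield of $\mathbb{Q}(\zeta_{d_1d_2})$; the Heilbronn--Stark conclusion no longer pins the zero to $\zeta_K$. A Siegel zero of $\zeta_L$ could in principle come from some $\mathbb{Q}(\sqrt{\pm q})$ with $q\mid d_1d_2$, in which case $\chi_1$ is a nontrivial character and $\theta_1=\chi_1(C_r)$ varies with $r$. One can repair this via Landau--Page (at most one exceptional character among all real characters of modulus $\le |d_1d_2 D|$), but that is extra work your sketch does not supply. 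The paper's ring class field route avoids the issue entirely.
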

\begin{remark}
	For the remainder of the proof of \cref{thm:application}, the constant $\vartheta$ may	 be allowed to vary from line-to-line. This will occurs finitely many times, so this is no cause for concern. 
\end{remark}

\begin{remark}
	\label{remark:bqf_cdt}
For the sieve to succeed, one crucially requires an asymptotic equality for $A_{d_1,d_2}(x)$ as in \eqref{eqn:bqf_congruence_sum_3} with small remainder terms. Proceeding via the Chebotarev density theorem, one might use a stronger version of \eqref{eqn:CDT} in \cite{VKM} to obtain the asymptotic
	\begin{equation}
	\label{eqn:CDT_BQF_Murty}
	A_{d_1,d_2}(x)  = \frac{g'(d_1) g''(d_2)}{h(D)}(\Li(x) +O(xe^{-\Cr{CDT}\sqrt{\log x}})),\quad \text{for } \log x\gg (\log|d_1 d_2 D|)^2+\frac{1}{1-\beta_1}.
	\end{equation}
	Currently, $(1-\beta_1)^{-1}\ll |D|^{1/2}\log|D|$ is the best unconditional effective bound for $\beta_1$.  Thus $x$ must be quite large with respect to $|D|$, $d_1$, and $d_2$; this adversely impacts the permissible ranges of $|D|$ and $z$ in \cref{thm:application}. To improve the range of $x$, one might instead appeal to variants of \eqref{eqn:LMO_Linnik} found in \cite{TZ1,TZ2,Weiss} but this only yields lower and upper bounds for $A_{d_1,d_2}(x)$, rendering the sieve powerless. Fortunately, \cref{thm:main_theorem} addresses all of these obstacles simultaneously. Regardless of whether $\beta_1$ exists, it maintains an asymptotic with an improved range of $x$ that is polynomial in $|D|, d_1,$ and $d_2$ while keeping satisfactory control on the error terms. This allows us to  strengthen the uniformity  of both $z$ and $|D|$ in \cref{thm:application} beyond what earlier versions of the Chebotarev density theorem permit. 
\end{remark}

Now,  set the level of distribution to be
\begin{equation}
R := z^{ \frac{1}{\sqrt{\eta}} \log \log z}.
\label{eqn:bqf_level_distribution}
\end{equation}
Since $z \leq x^{\eta/\log\log x}$ and $|D| \leq x^{\eta/\log\log z}$ by assumption, we have that $R \leq x^{1/10}$ and also $|d_1d_2D|  \leq x^{4\sqrt{\eta}}$ for any integers $d_1, d_2 < R$. Thus,  by \cref{lem:bqfs_congruence_sum} and \eqref{eqn:sieve_constraint}, it follows that
\begin{equation}
S(x) = \big( \mathcal{G}  + O(\mathcal{R})	\big) \frac{\Li(x)-\Li(x^{\beta_1})}{h(D)} + O(x^{3/4}),
\label{eqn:bqf_sifted}
\end{equation}
where
\begin{equation*}
\begin{aligned}
\mathcal{G} = \dsum_{\substack{d_1, d_2 \\ \gcd(d_1,d_2)=1}} \lambda_{d_1}'\lambda_{d_2}'' g'(d_1) g''(d_2), \qquad 
	\mathcal{R}
		& = \sum_{\substack{d < R^2 \\ d \mid P}} \frac{\tau(d)}{\varphi(d)} \epsilon_d(x).
\end{aligned}	
\end{equation*}
Here $\tau$ is the divisor function and $\varphi$ is Euler phi function.  We proceed to calculate the main term $\mathcal{G}$ and remainder terms $\mathcal{R}$.

\subsubsection{Main term $\mathcal{G}$} 
\label{subsec:bqf_main_term}
	For the main term $\mathcal{G}$, suppose we have chosen a lower bound sieve for the sum in \eqref{eqn:BQF_primes}; namely, suppose $\Lambda'$ is a lower bound beta sieve and $\Lambda''$ is an upper bound beta sieve, each with level of distribution $R$. Our aim is to apply the Fundamental Lemma in the form of \cref{thm:CompositionBetaSieves}. One can see that $g'$ and $g''$ are each  satisfy \eqref{eqn:sieve_dimension} with $\kappa = 1$ and $K$ absolutely bounded. Moreover, our choice of sieve has a sufficiently large sifting variable $s = \frac{\log R }{\log z} \gg \eta^{-1}$ because  $\eta > 0$ is sufficiently small.
	
	We claim that we may assume  
	\[
	g'(p) + g''(p) < 1 \qquad \text{for all primes $p$}
	\]
	and hence $g'$ and $g''$ also satisfy \eqref{eqn:size_g}. From \eqref{def:bqf_local_density}, the only concern occurs when $p = 2$ and $2 \mid P$. We prove the claim by checking cases and verifying that $g'(2) + g''(2) \geq 1$ only if \cref{thm:application} is trivially true. 
	\begin{itemize}
		\item Suppose $D \equiv 5 \pmod{8}$. By \eqref{def:bqf_local_density}, we have $g'(2) + g''(2) \leq \frac{1}{3} + \frac{1}{3} < 1$. 
		\item Suppose $D \equiv 1 \pmod{8}$ so $b \equiv 1 \pmod{2}$ and $ac \equiv 0 \pmod{2}$. If $a+b+c \equiv 0 \pmod{2}$ then  the sum in \eqref{eqn:BQF_primes} is necessarily empty because $\mathbf{1}_{\mathbb{P}}$ only detects odd primes. In this case, $a$ and $c$ have opposite parity so $g'(2) + g''(2) = 1$. Hence, $\delta_{f}(P) = 0$ by \eqref{eqn:bqf_euler_product} and \cref{thm:application} is therefore trivially true.  Otherwise, if $a + b + c \equiv 1 \pmod{2}$ then $a$ and $c$ have the same parity. As $ac \equiv 0 \pmod{2}$, it must be that $a \equiv c \equiv 0 \pmod{2}$ implying $g'(2) + g''(2) = 0 < 1$ by definition \eqref{def:bqf_local_density}. 
		\item Suppose $2 \mid D$ so $b \equiv 0 \pmod{2}$. If one of $a$ or $c$ is even then $g'(2) + g''(2) \leq \frac{1}{2} < 1$. Otherwise, if both $a$ and $c$ are odd then $g'(2) + g''(2) = 1$ and $a+b+c \equiv 0 \pmod{2}$. This implies  $\delta_{f}(P) = 0$ and also the sum in \eqref{eqn:BQF_primes} is necessarily empty so \cref{thm:application} is trivially true. 
	\end{itemize}
This proves the claim. Therefore, by \cref{thm:CompositionBetaSieves} and \eqref{eqn:bqf_level_distribution}, it follows that
\begin{equation}
\mathcal{G} \geq \delta_{f}(P)\{ 1 + O_A( (\log z)^{-A}) \}
\label{eqn:bqf_main_term}
\end{equation}
since $\eta = \eta(A)$ is sufficiently small. If $\Lambda'$ and $\Lambda''$ are both upper bound beta sieves with level of distribution $x^{1/10}$ then one similarly obtains the reverse inequality. 

\subsubsection{Remainder terms $\mathcal{R}$}
  We estimate $\mathcal{R}$ dyadically. By the Cauchy-Schwarz inequality and standard estimates for $\tau$ and $\varphi$, we see for $0 \leq N \leq \lceil  \frac{2\log R}{\log z} \rceil$ that  
\begin{equation*}
\begin{aligned}
		\sum_{\substack{z^N \leq d < z^{N+1} \\ d \mid P}} \frac{\tau(d)}{\varphi(d)} \epsilon_d(x) 
		& \ll \epsilon_{z^{N+1}}(x) \Big( \sum_{\substack{z^N \leq d < z^{N+1} \\ p \mid d \implies p \leq z}} \frac{1}{d}\Big)^{1/2} \Big( \sum_{\substack{z^N \leq d < z^{N+1}}} \frac{\tau(d)^2 d}{\varphi(d)^2}\Big)^{1/2} \\
		& \ll \epsilon_{z^{N+1}}(x) ( (N+1) \log z) ^{3/2}\Big( \sum_{\substack{z^N \leq d < z^{N+1} \\ p \mid d \implies p \leq z}} \frac{1}{d}\Big)^{1/2}.\\ 
\end{aligned}
\end{equation*}
By \eqref{eqn:bqf_level_distribution}, one has that $R^{\eta'/\log\log R} \leq z \leq R$ where $\eta' > 0$ is sufficiently small depending only on $\eta$. In other words, $\frac{\log R}{\log z} \ll \log\log z$. Thus, we may apply 
Hildebrand's estimate for $z$-smooth numbers \cite[Theorem 1]{Hildebrand-1986} via partial summation to conclude from \eqref{eqn:remainder_error} that the above is
\[
\ll (e^{-\vartheta \frac{\log x}{(N+1)\log z}} + e^{-\vartheta \frac{\log x}{\log |D|}} + e^{-\vartheta \sqrt{\log x}} )\rho(N) (N+1)^2 \log^2 z,
\]
where $\rho$ is the Dickman-de Bruijn function. Recall we allow the constant $\vartheta > 0$ to change from line-to-line and be replaced by a smaller value if necessary. Summing this estimate over $0 \leq N \leq \lceil \frac{2 \log R}{\log z} \rceil$ and using the crude estimate $\rho(N) \ll N^{-N}$ for $N \geq 1$, we deduce that
\begin{equation*}
\begin{aligned}
\mathcal{R} 
	& \ll ( \max_{N \geq 1} e^{- \frac{c \log x}{N\log z}} N^{-N + 2} ) \log^2 z  +  (e^{-\vartheta \frac{\log x}{\log z}} + e^{-\vartheta \frac{\log x}{\log |D|}} + e^{- \vartheta \sqrt{\log x}})\log^2 z \\
	& \ll ( e^{- \vartheta \sqrt{\frac{\log x \log\log x}{\log z}} } + e^{-\vartheta \frac{\log x}{\log z}}  + e^{- \vartheta \frac{\log x}{\log |D|}} + e^{- \vartheta \sqrt{\log x}})\log^2 z.
\end{aligned}
\end{equation*}
Since $|D| \leq x^{\eta/\log\log z}$ and $z \leq x^{\eta/\log\log x}$ with $\eta = \eta(A) > 0$ sufficiently small, we have that
\begin{equation}
\mathcal{R} \ll_A (\log z)^{-A}. 
\label{eqn:bqf_remainder_estimate}
\end{equation}
 
\subsubsection{Concluding the proof}
Inserting \eqref{eqn:bqf_main_term} and \eqref{eqn:bqf_remainder_estimate} into \eqref{eqn:bqf_sifted} along with the fact that $\delta_{f}(P) \gg (\log z)^{-2}$ from Mertens' estimate, we conclude that

\begin{equation*}
\begin{aligned}
& \dsum_{\substack{u,v \in \Z \\ au^2 + buv + cv^2 \leq x \\ (uv,P)=1}} \frac{\mathbf{1}_{\mathbb{P}}(au^2 + buv + cv^2)}{|\stab(f)|}  \geq \delta_{f}(P) \frac{\Li(x) - \Li(x^{\beta_1})}{h(D)} \{1 + O_A( (\log z)^{-A})\} + O(x^{3/4}). 
\end{aligned}
\end{equation*}
By using an upper bound sieve instead (as mentioned at the end of \cref{subsec:bqf_main_term}), one also obtains the reverse inequality. Thus, it remains to show the secondary error term $O(x^{3/4})$ may be absorbed into the primary error term. If $\delta_{f}(P) = 0$ then the arguments in \cref{subsec:bqf_main_term} imply \cref{thm:application} trivially true so we may assume $\delta_{f}(P) > 0$. By the effective lower bound that $1-\beta_1 \gg_{\epsilon} |D|^{-1/2-\epsilon}$, the fact that $h(D) \ll_{\epsilon} |D|^{1/2+\epsilon}$, and the assumption that $|D| \leq x^{\eta/\log\log z}$, we see
\[
\frac{\Li(x) - \Li(x^{\beta_1})}{h(D)} \gg x^{4/5}.
\]
As $\delta_f(P) \gg (\log z)^{-2}$, this implies the claim and hence proves \cref{thm:application}.  \hfill \qed
\subsection{Proof of \cref{lem:bqfs_congruence_sum}} \label{subsec:bqf_lem} The pair $(d_1,d_2)$  induces another form $f_{d_1,d_2}$   given by 
\[
f_{d_1,d_2}(s,t) := f(d_1s,d_2t).
\]
Note its discriminant is $D(d_1d_2)^2$. With this definition, it follows that
\begin{equation}
A_{d_1,d_2}(x) = \frac{1}{|\stab(f)|} \sum_{p\leq x}\#\{ (s,t) \in \Z^2 : p = f_{d_1,d_2}(s,t)\}. 
\end{equation}
Observe 
\[
A_{d_1,d_2}(x) \ll 1 \qquad \text{if $(d_1,c) \neq 1$ or $(d_2,a) \neq 1$} 
\]
since, in this case, $f_{d_1,d_2}$ is not primitive and hence represents an absolutely bounded number of primes. This trivially establishes \cref{lem:bqfs_congruence_sum} in this case. To evaluate $A_{d_1,d_2}(x)$ for all other $d_1$ and $d_2$, we use class field theory. 

\begin{lem}
\label{lem:bqfs_cft}
Let $\cO_K$ be the ring of integers of $K = \Q(\sqrt{D})$. For $d \geq 1$, let $\cO_d$   be the order of discriminant $-Dd^2$ in $K$ and let $L_d$ be the ring class field of $\mathcal{O}_d$. If $F$ is a primitive binary quadratic form of discriminant $-Dd^2$ then 
	\[
	|\cO^{\times}_d| = |\stab(F)|. 
	\]
	 Moreover, if $C_F$ is the conjugacy class corresponding to $F$ in the Galois group of $L_d/K$ then
	\[
	\#\{ (s,t) \in \Z^2 : p = F(s,t)\} =|\cO_d^{\times}|\cdot\#\{\kp \subseteq \cO_K \colon \N\kp=p,~ [\tfrac{L_d/K}{\kp}] = C_F\} \quad \text{for $p \nmid Dd$}. 
	\]
	Here $[\tfrac{L_d/K}{\kp}]$ is the Artin symbol of $\kp$ and $\N = \N_{K/\Q}$ is the absolute norm of $K/\Q$.  
\end{lem}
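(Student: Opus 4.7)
The plan is to invoke the classical correspondence between primitive positive definite forms of discriminant $-Dd^2$ and proper ideal classes of the order $\mathcal{O}_d$, together with the class field theoretic description of the ring class field $L_d$; every ingredient I need is standard and can be found in Cox's \emph{Primes of the Form $x^2+ny^2$} (Chapters~7 and~9). First I would set up the dictionary: to each proper fractional $\mathcal{O}_d$-ideal $\mathfrak{a}=[\alpha,\beta]$ with a suitably oriented $\mathbb{Z}$-basis, associate the form $f_\mathfrak{a}(s,t)=N_{K/\mathbb{Q}}(\alpha s+\beta t)/N(\mathfrak{a})$ of discriminant $-Dd^2$; this descends to a bijection between the proper ideal class group $\mathrm{Cl}(\mathcal{O}_d)$ and the set of $\mathrm{SL}_2(\mathbb{Z})$-equivalence classes of primitive positive definite forms of this discriminant. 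By class field theory for orders, the Artin map $\mathfrak{p}\mapsto[\tfrac{L_d/K}{\mathfrak{p}}]$ induces an isomorphism $\mathrm{Cl}(\mathcal{O}_d)\xrightarrow{\sim}\mathrm{Gal}(L_d/K)$, and composing the two identifications sends $F$ to a well-defined conjugacy class $C_F$.

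For the first assertion $|\mathcal{O}_d^\times|=|\mathrm{stab}(F)|$, I would observe that any $\mathrm{SL}_2(\mathbb{Z})$-self-equivalence of $f_\mathfrak{a}$ is exactly a change of oriented $\mathbb{Z}$-basis $(\alpha,\beta)\to(\alpha',\beta')$ of the same ideal $\mathfrak{a}$, which occurs if and only if the two bases are related by multiplication by a common unit $u\in\mathcal{O}_d^\times$. This sets up an explicit bijection $\mathrm{stab}(F)\leftrightarrow\mathcal{O}_d^\times$, recovering the familiar $|\mathrm{stab}(F)|\in\{2,4,6\}$ distinction in the three classical cases.

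For the representation formula, I would unwind both sides. Fixing a proper $\mathcal{O}_d$-ideal $\mathfrak{a}$ with $f_\mathfrak{a}\sim F$, a representation $p=F(s,t)$ amounts to saying that $\gamma:=\alpha s+\beta t\in\mathfrak{a}$ has norm $p\cdot N(\mathfrak{a})$, equivalently that $\gamma\mathfrak{a}^{-1}$ is a proper integral $\mathcal{O}_d$-ideal of norm $p$ in the class $[\mathfrak{a}^{-1}]$; two representations produce the same ideal precisely when the two values of $\gamma$ differ by a unit in $\mathcal{O}_d^\times$. Hence $\#\{(s,t)\in\mathbb{Z}^2:p=F(s,t)\}$ equals $|\mathcal{O}_d^\times|$ times the number of proper $\mathcal{O}_d$-prime-ideals $\mathfrak{B}$ of norm $p$ in the class $[\mathfrak{a}^{-1}]$. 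Because $p\nmid d$, the extension $\mathfrak{B}\mapsto\mathfrak{B}\mathcal{O}_K$ is a bijection between such ideals and primes $\mathfrak{p}$ of $\mathcal{O}_K$ of norm $p$ that respects ideal classes; applying the Artin isomorphism matches the class condition on $\mathfrak{p}$ with the Artin symbol condition $[\tfrac{L_d/K}{\mathfrak{p}}]=C_F$, finishing the argument.

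The main obstacle I anticipate is purely bookkeeping: fixing the orientation of the basis $(\alpha,\beta)$ so that $f_\mathfrak{a}$ is genuinely positive definite and the bijection with $\mathrm{Cl}(\mathcal{O}_d)$ is well-defined, verifying that contraction/extension between $\mathcal{O}_d$ and $\mathcal{O}_K$ preserves proper ideal classes at primes away from the conductor $d$, and checking that the Artin symbol is well-defined on classes in $\mathrm{Cl}(\mathcal{O}_d)$. All of these points are treated in Cox's Chapters~7 and~9, after which the lemma follows immediately from the two bijections above.
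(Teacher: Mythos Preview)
Your proposal is correct and follows essentially the same route as the paper: both arguments invoke the classical dictionary between primitive forms of discriminant $Dd^2$ and proper ideal classes of $\mathcal{O}_d$, together with the ring class field description of $L_d$, and both defer the technical details to Cox's book (the paper cites Theorem~7.7 for the second claim, just as you do). The one point of divergence is the first assertion $|\mathcal{O}_d^\times|=|\mathrm{stab}(F)|$: the paper simply checks it by cases (both sides equal $2$ unless $\mathcal{O}_d$ is the maximal order of $\mathbb{Q}(i)$ or $\mathbb{Q}(\sqrt{-3})$, where direct computation gives $4$ and $6$), whereas you set up a structural bijection by identifying automorphisms of the norm form on $\mathfrak{a}$ with multiplication by units. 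Your argument is more conceptual and avoids casework; the paper's is shorter. Either is fine here.
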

\begin{proof} These are straightforward consequences of the theory for positive definite binary quadratic forms, so we only sketch the details. Standard references include for example \cite{Cassels-2008,Cox}.  First, one can verify that $\cO_d^{\times} = \{ \pm 1\}$ unless $\cO_d$ is the ring of integers for $\Q(i)$ or $\Q(\sqrt{-3})$. Similarly, the $SL_2$-automorphism group of $F$ is $\{ \pm \big(\begin{smallmatrix} 1 & 0 \\ 0 & 1 \end{smallmatrix}\big) \}$ unless $F$ is properly equivalent to either $x^2+y^2$ or $x^2 + xy + y^2$. These are respectively the unique  reduced forms of discriminant $-4$ or $-3$. These remaining two cases can be checked by direct calculation. 

The second claim follows from the first claim and the one-to-one correspondence between inequivalent representations of a prime $p$ by $F$ and degree 1 prime ideals $\kp \subseteq \cO_K$ in the class $C_F$. For more details, see \cite[Theorem 7.7]{Cox}.
 \end{proof}

Now, assuming $(d_1,c) = (d_2,a) = 1$, we return to computing $A_{d_1,d_2}(x)$. It follows $f_{d_1,d_2}$ is primitive so by \cref{lem:bqfs_cft} with $F = f_{d_1,d_2}$ and $d= d_1d_2$,we deduce that
\begin{equation}
	\label{eqn:bqf_congruence_sum}
	A_{d_1,d_2}(x) =    \frac{1}{|\stab(f)|} \sumD_{\substack{\N\kp\leq x \\ \deg(\kp)=1}} |\mathcal{O}_{d_1d_2}^{\times}| + O\Big(\sum_{p \mid Dd_1d_2} 1 \Big) ,
\end{equation}
where   $\sum^{\dagger}$ runs over prime ideals $\kp$ in $\mathcal{O}_K$ unramified in $L_{d_1d_2}$ satisfying $[\frac{L_{d_1d_2}/K}{\kp}]= C_{f_{d_1,d_2}}$. Note, for the primes $p \mid Dd_1d_2$ in \eqref{eqn:bqf_congruence_sum}, we have used that each prime $p$ is represented by $f$ with absolutely bounded multiplicity. We may add the remaining degree 2 prime ideals $\kp$ to the $\dagger$-marked sum with error at most $O(|\cO_{d_1d_2}^{\times}|\sqrt{x}\log x) = O(\sqrt{x} \log x)$. Further, we have 
\[
\sum_{p \mid Dd_1d_2} 1 \ll \log|Dd_1d_2| \ll \log x
\]
since $|d_1d_2D| \leq x^{\gamma}$. Collecting these observations, it follows that
\begin{equation}
		\label{eqn:bqf_congruence_sum_2}
	A_{d_1,d_2}(x) = \frac{|\mathcal{O}_{d_1d_2}^{\times}|}{|\stab(f)|}  \sumD_{\substack{\N\kp\leq x}}1  + O(\sqrt{x} \log x).  
\end{equation}

We invoke \cref{thm:main_theorem} to compute the sum in \eqref{eqn:bqf_congruence_sum_2}, thus
\begin{equation}
\sumD_{\substack{\N\kp\leq x} } 1=\frac{\mathrm{Li}(x)- \theta_1 \mathrm{Li}(x^{\beta_1})}{h(D(d_1 d_2)^2)}\{ 1+O(\epsilon_{d_1d_2}(x)) \} \qquad \text{for $|d_1d_2D| \leq x^{\gamma}$,}
\label{eqn:bqf_apply_thm}
\end{equation}
 where $\epsilon_{d_1d_2}(x)$ is defined by \eqref{eqn:remainder_error} and $\gamma > 0$ is fixed and sufficiently small. We make two simplifications for \eqref{eqn:bqf_apply_thm}. First, we claim that $\theta_1 = 1$ if the exceptional zero $\beta_1$ exists. By a theorem of Heilbronn \cite{Heilbronn-1972} generalized by Stark \cite[Theorem 3]{Stark-1974},  since $\beta_1$ is a real simple zero of $\zeta_{L_{d_1d_2}}(s)$ and $L_{d_1d_2}$ is Galois over $\Q$ with $K$ being its only quadratic subfield, it follows that $\zeta_K(\beta_1) = 0$. Hence, the exceptional Hecke character $\chi_1$ of $K$ from \cref{thm:main_theorem} is trivial implying $\theta_1 = 1$. Second, we have for $d \geq 1$ that
\begin{equation}
h(Dd^2) = \frac{h(D)}{[\cO^{\times} : \cO_d^{\times}]} d \prod_{p \mid d} \Big(1 - \Big(\frac{D}{p}\Big) \frac{1}{p} \Big).
\label{eqn:classnumber}
\end{equation}
For a proof, see for example \cite[Theorem 7.4 and Corollary 7.28]{Cox}. 

Finally, with these observations, \cref{lem:bqfs_congruence_sum} follows by inserting \eqref{eqn:bqf_apply_thm} and \eqref{eqn:classnumber} into \eqref{eqn:bqf_congruence_sum_2} and noting that $[\cO_1^{\times} : \cO_d^{\times}] \cdot |\cO_d^{\times}| = |\cO_1^{\times}| = |\stab(f)|$ from \cref{lem:bqfs_cft}. 
\hfill \qed
%
%

 \appendix
\section{Error term with an exceptional zero}
\label{sec:appendix}

\cref{thm:LFZDE} states that if $T\geq 1$, then
\begin{equation}
\label{eqn:LFZDE_with_siegel}
\sum_{\chi}N(\sigma,T,\chi)\ll B_1(QT^{n_K})^{\Cr{ZDE}(1-\sigma)},\quad B_1=\min\{1,(1-\beta_1)\log(QT^{n_K})\}.
\end{equation}
This clearly implies that regardless of whether $\beta_1$ exists, we have
\begin{equation}
\label{eqn:LFZDE_no_siegel}
\sum_{\chi}N(\sigma,T,\chi)\ll (QT^{n_K})^{\Cr{ZDE}(1-\sigma)}.
\end{equation}
If $\beta_1$ exists, \cref{thm:LFZDE} produces the following strong zero-free region:
 
 \begin{thm}[Zero repulsion] \label{thm:DH}
	Suppose the exceptional zero $\beta_1$ of \cref{thm:ZFR} exists. There exists $\Cl[abcon]{DH} > 0$ such that if $\Delta$ is given in \cref{thm:ZFR}, then
	\[
	\Delta(t) \geq  \min\Big\{ \frac{1}{2}, \frac{ \Cr{DH} \log\big(\big[ (1-\beta_1) \log(Qt^{n_K}) \big]^{-1}\big) }{\log(Qt^{n_K})} \Big\}.
	\]
\end{thm}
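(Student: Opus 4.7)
The plan is to derive \cref{thm:DH} directly from the log-free zero density estimate \cref{thm:LFZDE}, which by design incorporates the zero-repulsion factor $B_1(t) = \min\{1, (1-\beta_1)\log(Qt^{n_K})\}$. Fix $t \geq 3$, and let $\rho = \beta + i\gamma$ be an arbitrary non-exceptional zero of some $L(s,\chi)$ appearing in the factorization of $\zeta_L(s)$, with $|\gamma|+3 \leq t$. The goal is to bound $1-\beta$ below by the expression claimed in the theorem. If $\beta < 1/2$, then $1-\beta \geq 1/2$ and the minimum with $1/2$ closes the argument, so I may assume $\beta \geq 1/2$.

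For any $\sigma < \beta$, the zero $\rho$ is counted by $N(\sigma, t, \chi)$. Consequently, by \cref{thm:LFZDE} there is an absolute constant $M > 0$ with
\[
1 \leq N(\sigma, t, \chi) \leq \sum_{\chi'} N(\sigma, t, \chi') \leq M \cdot B_1(t) \cdot (Qt^{n_K})^{\Cr{ZDE}(1-\sigma)}.
\]
Letting $\sigma \nearrow \beta$ and taking logarithms yields
\[
\Cr{ZDE}(1-\beta)\log(Qt^{n_K}) \geq -\log B_1(t) - \log M.
\]

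The right-hand side of the claimed lower bound in \cref{thm:DH} is positive only when $(1-\beta_1)\log(Qt^{n_K}) < 1$, and in that case $B_1(t) = (1-\beta_1)\log(Qt^{n_K})$. If this quantity is moreover at most, say, $M^{-2}$, then $\log M$ may be absorbed into the main term at the cost of a factor of $2$, yielding
\[
1 - \beta \gg \frac{\log\bigl([(1-\beta_1)\log(Qt^{n_K})]^{-1}\bigr)}{\log(Qt^{n_K})},
\]
as desired. If instead $(1-\beta_1)\log(Qt^{n_K})$ lies in the intermediate window $[M^{-2}, 1)$, then the claimed bound degenerates to one of order $1/\log(Qt^{n_K})$, which is furnished directly by the standard zero-free region \cref{thm:ZFR}; and if $(1-\beta_1)\log(Qt^{n_K}) \geq 1$, the claim is vacuous. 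Taking the infimum over all non-exceptional zeros $\rho$ with $|\gamma|+3 \leq t$, and choosing $\Cr{DH}$ small enough to handle the three regimes uniformly, completes the proof. The main technical point is the bookkeeping of absolute constants at the transitions between regimes; once \cref{thm:LFZDE} is in hand, no substantive obstacle remains.
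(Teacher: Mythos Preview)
Your proof is correct and follows precisely the approach the paper indicates: the paper simply states that ``\cref{thm:LFZDE} produces the following strong zero-free region'' and then records \cref{thm:DH} without further argument. You have supplied the details of this implication---extracting the Deuring--Heilbronn bound by observing that a single non-exceptional zero forces the density count to be at least $1$, then comparing with the upper bound $M\,B_1(t)\,(Qt^{n_K})^{\Cr{ZDE}(1-\beta)}$---and your handling of the three regimes for $(1-\beta_1)\log(Qt^{n_K})$ is clean.
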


Let $q\geq 1$ be an integer.  In the context of arithmetic progressions, in which case $L=\Q(e^{2\pi i/q})$ and $F=K=\Q$, it is preferable to use \eqref{eqn:LFZDE_no_siegel} and \cref{thm:DH} instead of \eqref{eqn:LFZDE_with_siegel}, as one can typically obtain numerically superior results with the former.  However, in the context of arithmetic progressions, one has the benefit of working with characters of an extension which is abelian over $\Q$, in which case \cref{thm:Stark} gives an adequate upper bound for $\beta_1$ (should it exist).  However, for abelian extensions $L/K$ where the root discriminant of $K$ is rather small, \cref{thm:Stark} gives an upper bound for $\beta_1$ which is not commensurate with the corresponding result for cyclotomic extensions of $\Q$.  In fact, this weak upper bound leads us to actually require a version of the log-free zero density estimate that improves as $\beta_1$ approaches 1 to handle the case when $K$ has a small root discriminant.  This is why we use \eqref{eqn:LFZDE_with_siegel} in our proofs instead of using \eqref{eqn:LFZDE_no_siegel} and \cref{thm:DH} separately.

For comparison with \cref{lem:ErrorOptimization_Classical}, we quantify the effect of \eqref{eqn:LFZDE_no_siegel} and \cref{thm:DH} on the error term in \cref{lem:SumOverZeros} and subsequently \eqref{eqn:WeightPrimeSum_Penultimate} in the proof of \cref{prop:WeightedPrimeSum}.  Since the calculations are tedious, we omit the proof.

\begin{lem}
	\label{lem:ErrorOptimization_SiegelZero}	
	Let $\eta$ be defined by \eqref{def:eta_function}. Suppose the exceptional zero $\beta_1 = 1 - \frac{\lambda_1}{\log Q}$ of \cref{thm:ZFR} exists. There exists absolute constants $\Cl[abcon]{SZ_lambda_size}, \Cl[abcon]{SZ_error}, \Cl[abcon]{SZ_size} > 0$ such that if $\lambda_1 \leq \Cr{SZ_lambda_size}$ and 
	$Q \leq x^{1/\Cr{SZ_size}}$, 
	\begin{align}
		e^{-\eta(x)} & \ll x^{-1/2} + \lambda_1^{10} \Big( e^{-\frac{\Cr{DH} \log x}{2 \log Q}} +  e^{- \Cr{SZ_error} \sqrt{ (\log x)/n_K}} \Big) & \text{if $\lambda_1 \geq Q^{-20/n_K}$}, 
				\label{eqn:ErrorTerm_Siegelzero_2} \\
		e^{-\eta(x)} & \ll x^{-1/2} + e^{-10\sqrt{\log(1/\lambda_1) } } \Big(  e^{-\frac{\Cr{DH} \log x}{2 \log Q}} +  e^{- \Cr{SZ_error} \sqrt{ (\log x)/n_K}}  \Big) & \text{if $\lambda_1 < Q^{-20/n_K}$}. 
			\label{eqn:ErrorTerm_Siegelzero_3}	
	\end{align}

\end{lem}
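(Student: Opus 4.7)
The plan is to carry out a single-variable optimization of $f(t) := \Delta(t) \log x + \log t$ for $t \geq 3$, where $\Delta(t)$ enjoys the lower bounds from both \cref{thm:ZFR} and \cref{thm:DH}; the main novelty over the classical \cref{lem:ErrorOptimization_Classical} is that the Deuring--Heilbronn bound of \cref{thm:DH} carries the additional factor $\log([(1-\beta_1)L(t)]^{-1})$, with $L(t) = \log(Q t^{n_K})$, which is the source of the $\lambda_1^{10}$ or $e^{-10\sqrt{\log(1/\lambda_1)}}$ savings advertised in \eqref{eqn:ErrorTerm_Siegelzero_2} and \eqref{eqn:ErrorTerm_Siegelzero_3}. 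In either case I would split the range of $t$ at a cutoff $t_0$, use \cref{thm:DH} below $t_0$, use \cref{thm:ZFR} above $t_0$, show that $f(t)$ is bounded below in each regime by a sum of two parts (one producing the $\lambda_1$-saving, one producing the $\log x$-saving), and then exponentiate.

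In the first regime ($\lambda_1 \geq Q^{-20/n_K}$) the natural cutoff is $t_0 = Q^{c/n_K}$ for an absolute constant $c$. For $3 \leq t \leq t_0$ one has $L(t) \asymp \log Q$ and $(1-\beta_1)L(t) \asymp \lambda_1$, so \cref{thm:DH} yields $\Delta(t) \gtrsim \Cr{DH}\log(1/\lambda_1)/\log Q$ (the smallness assumption $\lambda_1 \leq \Cr{SZ_lambda_size}$ absorbs the harmless $\log\log Q$ correction), hence
\[
f(t) \geq \tfrac{1}{2}\Cr{DH}(\log x)\log(1/\lambda_1)/\log Q \geq 10 \log(1/\lambda_1) + \tfrac{1}{2}\Cr{DH}(\log x)/\log Q
\]
provided $\Cr{SZ_size}$ is large enough to give $\log x/\log Q$ the room to be split into two pieces. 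For $t > t_0$, the term $\log t \geq (c/n_K)\log Q$ alone exceeds $10\log(1/\lambda_1)$ (by the Case 1 hypothesis $n_K \log(1/\lambda_1) \leq 20\log Q$), and applying the classical balance of \cref{lem:ErrorOptimization_Classical} to $\Cr{ZFR}(\log x)/L(t) + \log t$ delivers the residual $\sqrt{(\log x)/n_K}$ saving. Taking the minimum of the two regimes and exponentiating yields \eqref{eqn:ErrorTerm_Siegelzero_2}.

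The second regime ($\lambda_1 < Q^{-20/n_K}$) is handled analogously but with the cutoff shifted upward to $t_0$ satisfying $n_K \log t_0 \asymp \log(1/\lambda_1)$, so that $L(t_0) \asymp \log(1/\lambda_1)$. There the DH saving takes the form $\log(L(t)/\lambda_1 L(t)) \cdot (\log x)/L(t)$ near $t_0$, which is so generous that $e^{-10\sqrt{\log(1/\lambda_1)}}$ can be peeled off while retaining the $\Cr{DH}(\log x)/(2\log Q)$ and $\sqrt{(\log x)/n_K}$ main savings; this produces \eqref{eqn:ErrorTerm_Siegelzero_3}. The trivial term $x^{-1/2}$ absorbs degenerate corners (saturation against the $1/2$ cap in \cref{thm:DH}, or $\log(1/\lambda_1)$ too small for the square-root step to be meaningful).

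The main obstacle is the delicate bookkeeping near the cutoff $t_0$: the function $\log([(1-\beta_1)L(t)]^{-1})$ is non-monotone in $t$, and the $\log L(t)$ correction inside the logarithm must be carefully prevented from eroding the $\log(1/\lambda_1)$ saving as $t$ crosses $t_0$. A secondary nuisance is producing the precise exponent $10$ on $\lambda_1$ and the exact factor $\Cr{DH}/2$, rather than $\Cr{DH}$: this forces $\Cr{SZ_size}$ to be chosen suitably large relative to $\Cr{ZFR}$ and $\Cr{DH}$, which is the source of the tedious constant-tracking the authors allude to.
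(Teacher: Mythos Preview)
The paper omits the proof entirely (``Since the calculations are tedious, we omit the proof''), so there is no argument to compare against; your outline is almost certainly the intended one. Your Case~1 treatment is sound: with cutoff $t_0 = Q^{c/n_K}$ for $c$ large enough (say $c=400$), for $t\le t_0$ one has $L(t)\asymp\log Q$ and $(1-\beta_1)L(t)\asymp\lambda_1$, so \cref{thm:DH} gives $\Delta(t)\gg \Cr{DH}\log(1/\lambda_1)/\log Q$ and the product with $\log x$ splits as required; for $t>t_0$ the hypothesis $n_K\log(1/\lambda_1)\le 20\log Q$ indeed forces $\tfrac12\log t \ge 10\log(1/\lambda_1)$, after which the classical balance on $\Cr{ZFR}(\log x)/L(t)+\tfrac12\log t$ supplies the remaining $\sqrt{(\log x)/n_K}$ saving.

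Your Case~2 cutoff, however, is not right. Taking $n_K\log t_0\asymp\mu$ (with $\mu=\log(1/\lambda_1)$) does \emph{not} yield $L(t_0)\asymp\mu$, since $L(t_0)=\log Q+n_K\log t_0\asymp\max(\log Q,\mu)$ and the Case~2 hypothesis $n_K\mu>20\log Q$ does not force $\mu>\log Q$ when $n_K$ is large. More importantly, in the regime $t>t_0$ you need $\log t\ge 10\sqrt{\mu}$ to peel off the $e^{-10\sqrt{\mu}}$ factor, but your $\log t_0=c\mu/n_K$ can be far smaller than $\sqrt{\mu}$ (take $n_K\gg\sqrt{\mu}$, which is not excluded). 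The correct cutoff in Case~2 is $\log t_0\asymp\sqrt{\mu}$: then for $t>t_0$ one peels off $10\sqrt{\mu}$ from $\log t$ directly, and for $t\le t_0$ one has $L(t)\le \log Q + O(n_K\sqrt{\mu})$. Using $\mu\ge 20(\log Q)/n_K\ge 20\log n_K$ one checks $\log(L(t)/\log Q)\le\mu/2$, so \cref{thm:DH} still gives $\Delta(t)\gg \Cr{DH}\mu/L(t)$; splitting into the subcases $n_K\sqrt{\mu}\lessgtr\log Q$ then produces $10\sqrt{\mu}$ plus either $\Cr{DH}(\log x)/(2\log Q)$ or $\Cr{SZ_error}\sqrt{(\log x)/n_K}$, as claimed. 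The $x^{-1/2}$ term catches the range where the $1/2$ cap in \cref{thm:DH} is active, exactly as you say.
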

 
\begin{remark}
\label{rem:SZ}
Recall the definition of $\nu_1$ in \eqref{def:nu}.  From \eqref{eqn:proof_WeightedPrimeSum} and  \eqref{eqn:MainTerm_LB}, one can see it is critical to prove an estimate at least as strong as
\begin{equation}
\nu_1 x e^{-\eta(x)} = o(\lambda_1 x). 
\label{eqn:SZ_MinError}
\end{equation}
Notice that the density estimate in \eqref{eqn:LFZDE_with_siegel} decays linearly with respect to $1-\beta_1$ (that is, $\nu_1 = \lambda_1$), so we easily obtain \eqref{eqn:SZ_MinError}. Suppose we instead use \eqref{eqn:LFZDE_no_siegel}, which is tantamount to the trivial estimate $\nu_1\leq 1$ when $\beta_1$ exists.  From \eqref{eqn:ErrorTerm_Siegelzero_2}, one obtains \eqref{eqn:SZ_MinError} when $\lambda_1 \geq Q^{-20/n_K}$. Otherwise, from \eqref{eqn:ErrorTerm_Siegelzero_3}, if $\lambda_1 < Q^{-20/n_K}$ then we can at best show $x e^{-\eta(x)} = o(e^{-10\sqrt{\log(1/\lambda_1) } } x)$.  The situation $\lambda_1 < Q^{-20/n_K}$ is not uniformly excluded by Stark's bound \eqref{eqn:Stark}. For example, when the root discriminant $D_K^{1/n_K}$ is bounded and the extension $L/K$ is unramified (that is, $\cQ = 1$), then 
\[
Q^{100/n_K} = (D_K \cQ)^{100/n_K} n_K^{100} \ll n_K^{100}
\]
 and Stark's bound \eqref{eqn:Stark} implies $\lambda_1^{-1} \ll n_K^{n_K} \log D_K$ so it may very well be the case that $\lambda_1^{-1} \gg n_K^{100} \gg Q^{100/n_K}$.  This situation with a bounded root discriminant is entirely possible as Minkowski's unconditional estimate $n_K \ll \log D_K$ is tight when varying over all number fields $K$. Infinite class field towers are well known sources of this scenario.  Thus, we cannot see how to unconditionally obtain the desired linear decay demanded by \eqref{eqn:SZ_MinError} with only \eqref{eqn:LFZDE_no_siegel} and \cref{thm:DH}.  
\end{remark}

\bibliographystyle{abbrv}
\bibliography{CDT_unified}
\end{document}